\newif\ifshowflag
\newif\ifshowqstn
\newif\ifshowinfo
\newcommand{\flag}[1]{\ifshowflag
  {\noindent\color{red}{$\clubsuit\clubsuit\clubsuit$\; {\sffamily #1}\; $\clubsuit\clubsuit\clubsuit$}}\fi}
\numberwithin{equation}{section}        
\newcommand{\Va}{V\"ais\"al\"a}     
\def\rf#1{\@rf{#1}#1:;;}
\def\rfs#1{\@rfs{#1}#1:;;}
\def\rfm#1{\@rfF#1<>;;}
\def\@C{C}\def\@CC{CC}\def\@E{E}\def\@F{F}\def\@L{L}\def\@P{P}\def\@PP{PP}\def\@Q{Q}
\def\@R{R}\def\@S{S}\def\@T{T}\def\@TT{TT}\def\@X{X}\def\@XX{XX}\def\@Ex{Ex}
\def\@s{s}\def\@ss{ss}\def\@f{f}
\def\@rf#1#2:#3;;{\def\@b{#2}
  \ifx\@b\@C Corollary~\ref{#1}\else%
  \ifx\@b\@CC Corollary~\ref{#1}\else%
  \ifx\@b\@E (\ref{#1})\else
  \ifx\@b\@Ex Exercise~\ref{#1}\else%
  \ifx\@b\@F Fact~\ref{#1}\else%
  \ifx\@b\@L Lemma~\ref{#1}\else%
  \ifx\@b\@P Proposition~\ref{#1}\else%
  \ifx\@b\@PP Proposition~\ref{#1}\else%
  \ifx\@b\@Q Question~\ref{#1}\else%
  \ifx\@b\@R Remark~\ref{#1}\else%
  \ifx\@b\@S Section~\ref{#1}\else%
  \ifx\@b\@T Theorem~\ref{#1}\else%
  \ifx\@b\@TT Theorem~\ref{#1}\else%
  \ifx\@b\@X Example~\ref{#1}\else%
  \ifx\@b\@XX Example~\ref{#1}\else%
  \ifx\@b\@s \S\ref{#1}\else
  \ifx\@b\@ss \S\ref{#1}\else
  \ifx\@b\@f Figure~\ref{#1}\else%
  \ref{#1}\fi\fi\fi\fi\fi\fi\fi\fi\fi\fi\fi\fi\fi\fi\fi\fi\fi\fi}
\def\@rfs#1#2:#3;;{\def\@b{#2}
  \ifx\@b\@C Corollaries~\ref{#1}\else%
  \ifx\@b\@CC Corollaries~\ref{#1}\else%
  \ifx\@b\@Ex Exercises~\ref{#1}\else%
  \ifx\@b\@F Facts~\ref{#1}\else%
  \ifx\@b\@L Lemmas~\ref{#1}\else%
  \ifx\@b\@P Propositions~\ref{#1}\else%
  \ifx\@b\@PP Propositions~\ref{#1}\else%
  \ifx\@b\@Q Questions~\ref{#1}\else%
  \ifx\@b\@R Remarks~\ref{#1}\else%
  \ifx\@b\@S Sections~\ref{#1}\else%
  \ifx\@b\@T Theorems~\ref{#1}\else%
  \ifx\@b\@TT Theorems~\ref{#1}\else%
  \ifx\@b\@X Examples~\ref{#1}\else%
  \ifx\@b\@XX Example~\ref{#1}\else%
  \ifx\@b\@s \S\S\ref{#1}\else
  \ifx\@b\@ss \S\S\ref{#1}\else
  \ifx\@b\@f Figures~\ref{#1}\else%
  \ref{#1}\fi\fi\fi\fi\fi\fi\fi\fi\fi\fi\fi\fi\fi\fi\fi\fi\fi}
\def\@rfF<#1>#2;;{\def\@c{#2}
  \@rfs{#1}#1:;;\ifx\@c\empty\else\@rfL:#2;;\fi}
\def\@rfL:#1<#2>#3;;{\def\@b{#2}\def\@c{#3}
  #1\ifx\@b\empty\else\ref{#2}\ifx\@c\empty\else\@rfL:#3;;\fi\fi}
\definecolor{darkblue}{rgb}{0,0,0.6}
\definecolor{darkgreen}{rgb}{0,0.4,0}
\definecolor{darkred}{rgb}{0.6,0,0}
\definecolor{lightblue}{rgb}{0.8,0.8,1}
\definecolor{lightgreen}{rgb}{0.25,1,0.25}
\definecolor{lightred}{rgb}{1,0.5,0.5}
\definecolor{lightpurple}{rgb}{1,0.4,0.6}
\definecolor{darkpurple}{rgb}{0.5,0,0.5}
\newcounter{pos}
\tikzset{
  initcounter/.code={\setcounter{pos}{0}},
  style between/.style n args={3}{
    postaction={
      initcounter,
      decorate,
      decoration={
        show path construction,
        curveto code={
          \addtocounter{pos}{1}
          \pgfmathtruncatemacro{\min}{#1 - 1}
          \ifthenelse{\thepos < #2 \AND \thepos > \min}{
            \draw[#3]
            (\tikzinputsegmentfirst)
            ..
            controls (\tikzinputsegmentsupporta) and (\tikzinputsegmentsupportb)
            ..
            (\tikzinputsegmentlast);
          }{}
        }
      }
    },
  },
}
\newlength{\hatchspread}
\newlength{\hatchthickness}
\newlength{\hatchshift}
\newcommand{\hatchcolor}{}
\tikzset{hatchspread/.code={\setlength{\hatchspread}{#1}},
         hatchthickness/.code={\setlength{\hatchthickness}{#1}},
         hatchshift/.code={\setlength{\hatchshift}{#1}},
         hatchcolor/.code={\renewcommand{\hatchcolor}{#1}}}
\tikzset{hatchspread=3pt,
         hatchthickness=0.4pt,
         hatchshift=0pt,
         hatchcolor=black}
\def\centerarc[#1](#2)(#3:#4:#5)
\newcommand{\ds}{\displaystyle} \newcommand{\half}{\frac{1}{2}}       \newcommand{\qtr}{\frac{1}{4}}
                \newcommand{\xra}{\xrightarrow}
\newcommand{\comp}{\circ}           	        
\newcommand{\sm}{\setminus}                     
\newcommand{\lex}{\lesssim} \newcommand{\gex}{\gtrsim}  
\newcommand{\eqx}{\simeq}
\def\vint_#1{\mathchoice
          {\mathop{\vrule width 6pt height 3 pt depth -2.5pt
                  \kern -8pt \intop}\nolimits_{\kern -4pt#1}}%
          {\mathop{\vrule width 5pt height 3 pt depth -2.6pt
                  \kern -6pt \intop}\nolimits_{#1}}%
          {\mathop{\vrule width 5pt height 3 pt depth -2.6pt
                  \kern -6pt \intop}\nolimits_{#1}}%
          {\mathop{\vrule width 5pt height 3 pt depth -2.6pt
                   \kern -6pt \intop}\nolimits_{#1}}}
\newcommand{\ifff}{if and only if }  \newcommand{\wrt}{with respect to }  \newcommand{\param}{parametrization}
    \newcommand{\tfaqe}{the following are quantitatively equivalent}
        \newcommand{\holo}{holomorphic}
       \newcommand{\qc}{quasiconformal}
\newcommand{\qsy}{quasisymmetry}    \newcommand{\qsc}{quasisymmetric}
\newcommand{\qiy}{quasiisometry}    \newcommand{\qic}{quasiisometric}
\newcommand{\homeo}{homeomorphism}  \newcommand{\homic}{homeomorphic}
\newcommand{\Mob}{M\"obius}        \newcommand{\MT}{\Mob\ transformation}
\newcommand{\alf}{\alpha}       \newcommand{\del}{\delta}   \newcommand{\Del}{\Delta}
\newcommand{\veps}{\varepsilon} \newcommand{\vphi}{\varphi} \newcommand{\gam}{\gamma}
\newcommand{\Gam}{\Gamma}          \newcommand{\lam}{\lambda}
\newcommand{\Lam}{\Lambda}          \newcommand{\Om}{\Omega}
\newcommand{\sig}{\sigma}          \newcommand{\tha}{\theta}
\newcommand{\vth}{\vartheta}    
     \newcommand{\Ups}{\Upsilon}
\newcommand{\mcA}{{\mathcal A}}   
   \newcommand{\mcH}{{\mathcal H}}
\DeclareMathOperator{\id}{{\mathsf{id}}}        
\DeclareMathOperator{\ed}{\lvert\cdot\rvert}    
\providecommand{\abs}[1]{\lvert#1\rvert}        
\DeclareMathOperator{\md}{\mathsf{mod}}
\DeclareMathOperator{\diam}{\mathsf{diam}}
\DeclareMathOperator{\dist}{\mathsf{dist}}
\newcommand{\B}{\mathsf{B}}     
\newcommand{\D}{\mathsf{D}}     
\newcommand{\A}{\mathsf{A}}     
\DeclareMathOperator{\Arg}{Arg}
\DeclareMathOperator{\Log}{Log}
\newcommand{\mathfont}{\mathsf} 
\newcommand{\mfC}{{\mathfont C}}      
\newcommand{\mfD}{{\mathfont D}}      
\newcommand{\mfN}{{\mathfont N}}      
\newcommand{\mfR}{{\mathfont R}}      
\newcommand{\mfS}{{\mathfont S}}      
\newcommand{\mfZ}{{\mathfont Z}}      
\newcommand{\cmfD}{\bar{\mathfont D}}      
\newcommand{\RS}{\hat{\mfC}}     
\newcommand{\Rh}{\hat{\mfR}}     
\newcommand{\bd}{\partial}      
\newcommand{\bA}{{\partial A}}    
\newcommand{\bD}{{\partial D}}    
\newcommand{\cA}{{\bar A}}      
	\newcommand{\bOm}{{\partial\Omega}} 
\newtheorem{Thm}{Theorem}
\renewcommand{\theThm}{\Alph{Thm}}      
\newtheorem{Cor}[Thm]{Corollary}        
\theoremstyle{remark}
\theoremstyle{plain}
\newtheorem*{thm*}{Theorem}         
\newtheorem*{lma*}{Lemma}           
\newtheorem*{cor*}{Corollary}
\newtheorem*{conj*}{Conjecture}
\newtheorem*{prop*}{Proposition}
\theoremstyle{remark}
\newtheorem*{claim*}{Claim}
\newtheorem*{xx*}{Example}
\newtheorem*{xxs*}{Examples}
\newtheorem*{fact*}{Fact}
\newtheorem*{qstn*}{Question}
\newtheorem*{rmk*}{Remark}
\newtheorem*{rmks*}{Remarks}
\theoremstyle{plain}
\newtheorem{thm}[equation]{Theorem}
\newtheorem{lma}[equation]{Lemma}
\newtheorem{cor}[equation]{Corollary}
\newtheorem{prop}[equation]{Proposition}
\theoremstyle{remark}
\newtheorem{xx}[equation]{Example}
\newtheorem{fact}[equation]{Fact}
\newtheorem{facts}[equation]{Facts}
\newtheorem{qstn}[equation]{Question}
\newtheorem{rmks}[equation]{Remarks}
\smallskip\noindent{\em #1}}{\par\smallskip}
\newenvironment{noname}[1]
  {\par\smallskip\noindent%
   \leftskip=\nnlen\rightskip=\nnlen\addtolength{\leftmargini}{\nnlen}%
   \em #1}%
  {\par\smallskip\addtolength{\leftmargini}{-\nnlen}}
\newlength{\nnlen}\setlength{\nnlen}{30pt}
\newenvironment{pf}[1]
  {\par\smallskip\noindent\refstepcounter{equation}\theequation.{ \em #1.}}%
  {\qed\smallskip}
\newenvironment{pf*}[1]{\subsubsection*{#1}}{\qed\smallskip}	
\newcounter{aenumctr} 
  {\begin{list}%
    {\rm(\alph{aenumctr})}
    {\usecounter{aenumctr}}
    \setlength{\rightmargin}{\leftmargin}}
  {\end{list}} 
\renewcommand{\mathfont}{\mathbb}
\newcommand{\cra}{\curvearrowright}      
\newcommand{\lsig}{l_\sig}               
\newcommand{\ells}{\ell_\sig}            
\newcommand{\ksig}{k_\sig}
\newcommand{\kchi}{k_\chi}
\newcommand{\Ah}{\hat{A}}
\newcommand{\bOmh}{\hat{\bd}\Om}
\newcommand{\subann}{\mathrel{\ooalign{$\subset$\cr\hidewidth\raisebox{0.22ex}{\hspace{1mm}$\scriptstyle\rm a$}\hidewidth}}}
\newcommand{\csubann}{\mathrel{\ooalign{$\subset$\cr\hidewidth\raisebox{0.22ex}{\hspace{1mm}$\scriptstyle\rm c$}\hidewidth}}}
\newcommand{\Sone}{\mathsf{S}^1}    
\newcommand{\BL}{bi-Lipschitz}
\newcommand{\bp}{\mathsf{bp}}
\newcommand{\BP}{\mathsf{BP}}
\newcommand{\BPt}{\textsf{BP}}
\newcommand{\bdi}{\bd_{\rm in}}     
\newcommand{\bdo}{\bd_{\rm out}}    
\newcommand{\Ain}{A_{\rm in}}       
\newcommand{\Aout}{A_{\rm out}}     
\newcommand{\Piin}{\Pi_{\rm in}}    
\newcommand{\Piout}{\Pi_{\rm out}}  
\newcommand{\Coo}{{\mfC}_{01}}		
\newcommand{\Cab}{{\mfC}_{ab}}		
\newcommand{\Cstar}{{\mfC}_\star}		
\newcommand{\Dstar}{{\mfD}_\star}		
\newcommand{\Delstar}{{\Del}^{\!\star}}	
\newcommand{\kk}{\mathsf{k}}        
\newcommand{\cc}{\mathsf{c}}        
\begin{document} 
\title[Hyperbolic versus QuasiHyperbolic Distance]{Hyperbolic Distance versus QuasiHyperbolic Distance\\ in Plane Domains}
\date{\today}


\author{David A. Herron}
\address{Department of Mathematical Sciences, University of Cincinnati, OH 45221-0025, USA}
\email{David.Herron@UC.edu}

\author{Jeff Lindquist}
\address{Department of Mathematical Sciences, University of Cincinnati, OH 45221-0025, USA}
\email{jlindquistmath@gmail.com}




\keywords{hyperbolic metric, quasihyperbolic metric, quasisymmetry, quasiisometry}
\subjclass[2010]{Primary: 30F45, 30L99; Secondary: 51F99, 30C62}

\begin{abstract}
We examine Euclidean plane domains with their hyperbolic or quasihyperbolic distance.  We prove that the associated metric spaces are quasisymmetrically equivalent \ifff they are \BL\ equivalent.  On the other hand, for Gromov hyperbolic domains, the two corresponding Gromov boundaries are always quasisymmetrically equivalent.  Surprisingly, for any finitely connected hyperbolic domain, these two metric spaces are always quasiisometrically equivalent.  We construct an example where the spaces are not quasiisometrically equivalent.
\end{abstract}

\dedicatory{Dedicated to David Minda, for decades of interesting discussions.}

\newcommand{\PV}{{{\small \em  preliminary version---please do not circulate}}}
\maketitle
%

\section{Introduction}  \label{S:Intro} 
Throughout this section $\Om$ denotes a hyperbolic plane domain: $\Om\subset\mfC$ is open and connected and $\mfC\sm\Om$ contains at least two points.  Each such $\Om$ carries a unique maximal constant curvature -1 conformal metric $\lam\,ds=\lam_\Om\,ds$ usually referred to as the \emph{Poincar\'e hyperbolic metric} on $\Om$.  The length distance $h=h_\Om$ induced by $\lam\,ds$ is called \emph{hyperbolic distance} in $\Om$.  There is also a \emph{quasihyperbolic metric} $\del^{-1}ds=\del_\Om^{-1}ds$ on $\Om$, whose length distance $k=k_\Om$ is called \emph{quasihyperbolic distance} in $\Om$; here $\del(z)=\del_\Om(z):=\dist(z,\bOm)$ is the Euclidean distance from $z$ to the boundary of $\Om$.  See \rf{s:cfml metrics} for more details.

This works continues that begun in \cite{HB-geodesics}, \cite{DAH-k-univ-covers}, \cite{DAH-hlexj} where we elucidate the geometric similarities and metric differences between the metric spaces $(\Om,h)$ and $(\Om,k)$.  Our first result, \rf{TT:QSequiv} below, characterizes when the metric spaces $(\Om,h)$ and $(\Om,k)$ are \qsc ally equivalent.  Then \rf{TT:Gromov} reveals that, when these spaces are Gromov hyperbolic, their Gromov boundaries are always \qsc ally equivalent.

To set the stage, we begin with some preliminary observations.
A straightforward, albeit non-trivial, argument reveals that the metric spaces $(\Om,h)$ and $(\Om,k)$ are isometric \ifff $\Om$ is an open half-plane and the isometry is the restriction of a M\"obius transformation.  Furthermore, these metric spaces are bi-Lipschitz equivalent \ifff the identity map is bi-Lipschitz; see \rf{ss:BP}.

It is well-known that the identity map $(\Om,k)\xra{\id}(\Om,h)$ enjoys the following properties:
\begin{itemize}
  \item  The map $\id$ is a 2-Lipschitz $1$-quasiconformal \homeo.
  \item  For any simply connected $\Om$, $\id$ is 2-bi-Lipschitz.\footnote{That $\id^{-1}$ is 2-Lipschitz is a consequence of Koebe's One Quarter Theorem.}
  \item  In general, $\id$ is bi-Lipschitz \ifff $\RS\sm\Om$ is uniformly perfect.\footnote{This is quantitative: the bi-Lipschitz and uniformly perfectness constants depend only on each other.}
\end{itemize}
The last item above is due to Beardon and Pommerenke; see \cite{BP-beta} and \rf{ss:BP}.

For general hyperbolic plane domains $\Om$, there is no simple metric control on $\id^{-1}$.  For example, given \emph{any} sequences $(h_n)_1^\infty$ and $(k_n)_1^\infty$ of positive numbers with say $1\ge h_n\to0$ and $2\le k_n\to\infty$, there are sequences $(a_n)_1^\infty, (b_n)_1^\infty$ of points in the punctured unit disk $\mfD_\star:=\mfD\sm\{0\}$ with hyperbolic and quasihyperbolic distances $h_\star(a_n,b_n)=h_n$ and $k_\star(a_n,b_n)=k_n$.  See \cite[Ex.~2.7]{HB-geodesics}

The following striking rigidity theorem contains our first main result.  (See \rf{ss:MPG} for mapping definitions.)  This says that  the metric spaces $(\Om,k)$ and $(\Om,h)$ are either ``quite similar'' (i.e., \BL\ equivalent) or ``quite different'' (i.e., not \qsc ally equivalent).

\begin{Thm}  \label{TT:QSequiv}
For any hyperbolic plane domain $\Om$, \tfaqe.
\begin{enumerate}[\rm(\theThm.1), wide, labelwidth=!]
  \item  The metric spaces $(\Om,k)$ and $(\Om,h)$ are \qsc ally equivalent.
  \item  The metric spaces $(\Om,k)$ and $(\Om,h)$ are \BL\ equivalent.
  \item  The identity map $(\Om,k)\to(\Om,h)$ is \BL.
  \item  $\RS\sm\Om$  is uniformly perfect.
\end{enumerate}
\end{Thm}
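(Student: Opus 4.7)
The plan is to establish the chain of implications $(4) \Rightarrow (3) \Rightarrow (2) \Rightarrow (1) \Rightarrow (4)$. The first three are straightforward: $(4) \Rightarrow (3)$ is the Beardon--Pommerenke theorem cited in the introduction; $(3) \Rightarrow (2)$ holds because the identity then witnesses the \BL\ equivalence; and $(2) \Rightarrow (1)$ holds because every $L$-\BL\ map is $\eta$-\qsc\ with $\eta(t) = L^2 t$. The substance of the theorem is $(1) \Rightarrow (4)$, which I propose to prove by contrapositive.

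Suppose $\RS \sm \Om$ is not uniformly perfect. By Beardon--Pommerenke this means $\inf_\Om \lam\,\del = 0$, and geometrically it yields arbitrarily thin round annuli $A_n = \{z : r_n < |z - c_n| < R_n\} \subset \Om$ with $R_n/r_n \to \infty$, each of whose two complementary components meets $\RS \sm \Om$. On the core circle $|z - c_n| = \sqrt{r_n R_n}$ of $A_n$ I place antipodal points $a_n, b_n$. A direct computation gives the key contrast: $h(a_n, b_n) \le \pi^2/\log(R_n/r_n) \to 0$, using $\lam_\Om \le \lam_{A_n}$ together with the explicit hyperbolic metric on a round annulus (whose core is a short geodesic of length $2\pi^2/\log(R_n/r_n)$), while $k(a_n, b_n) \asymp 1$, using $|a_n - b_n| = 2\sqrt{r_n R_n}$, the estimate $\del_\Om \asymp \sqrt{r_n R_n}$ on the core (forced by the complementary points inside and outside $A_n$), and the standard lower bound $k(x,y) \ge \log(1 + |x-y|/\min\{\del(x),\del(y)\})$.

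To convert this pair-of-points contrast into a three-point obstruction for an $\eta$-\qsc\ map $f : (\Om, k) \to (\Om, h)$, I would use the following additional structural constraint on $f$. Since both $k$ and $h$ are conformal metrics on $\Om$ with densities $1/\del$ and $\lam$, a \qsc\ map between them is automatically a $K(\eta)$-quasiconformal self-\homeo\ of $\Om$ in the standard Euclidean sense; in particular $f$ preserves the conformal modulus of each $A_n$ up to a factor depending only on $\eta$, so $f(A_n)$ is itself a topological annulus in $\Om$ of modulus $\to \infty$, separating $\RS \sm \Om$. Consequently the same $h$-vs-$k$ core-circle contrast applies on the target side at $(f(a_n), f(b_n))$. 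Combining source and target asymptotics with the $\eta$-\qsc\ inequality applied to a well-chosen triple $(a_n, b_n, w_n)$ --- e.g., $w_n$ a fixed basepoint, or a third point on the core circle at angle $\pi/2$ from $a_n$ --- should produce the desired contradiction.

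The principal obstacle is this final step. \qsy\ is a three-point condition, and natural choices of $w_n$ often yield compatible $k$- and $h$-ratio decay rates on the source side; ruling out compatibility requires combining the core-circle asymptotics on both source and target with the quasiconformal/modulus control on $f$, most likely via a rescaling or normal-families argument at each $A_n$.
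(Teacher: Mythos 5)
Your chain $(4)\Rightarrow(3)\Rightarrow(2)\Rightarrow(1)\Rightarrow(4)$ is the right organization, and the easy implications are handled correctly. Your two-point contrast on the core circle of a fat annulus $A_n\in\mcA_\Om$ is also correct as far as it goes: with $a_n,b_n$ antipodal on $\mathsf{S}^1(A_n)$ one does get $h(a_n,b_n)\lex 1/\md(A_n)\to0$ while $k(a_n,b_n)\eqx1$. But you have honestly flagged the real problem and left it unresolved: \qsy\ is a three-point condition, and the antipodal pair on the core circle does not by itself produce a triple whose $h$-ratio and $k$-ratio behave incompatibly. As you observe, a third point $w_n$ on the core circle gives compatible ratios on both sides, and a fixed basepoint $w_0$ gives ratios that both degenerate harmlessly. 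The ``rescaling or normal-families'' idea is not developed, and it is not the route the paper takes; nor is it clear it would close the gap, since after applying $f$ you only know $f(A_n)$ is a topological annulus of comparable modulus, not a round one, so the ``same core-circle contrast on the target side'' is itself an unproven assertion requiring a Teichm\"uller-type reduction.

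The missing idea in the paper is a different, \emph{asymmetric} placement of three points (Lemma~\ref{L:fat ann}). Normalize $A=\{e^{-m}<|z|<e^m\}$ with, say, $-e^{-m}\in\bOm$, and take $a:=e^{1-m}$ (near the inner boundary circle, which carries a boundary point of $\Om$), $b:=\sqrt{a}$ (logarithmic midpoint), and $c:=1$ (on the core circle). These are \emph{radially} spread, not antipodal, and the payoff is exactly the contrast a two-point argument cannot supply: all three quasihyperbolic distances are large and comparable, $k(a,b)\eqx k(c,b)\eqx m$, while the hyperbolic distances split --- $h(a,b)\ge\log\bigl(1+\frac{m-1}{2(\kk+1)}\bigr)$ grows like $\log m$ (using $\Om\subset\mfC\sm\{0,-e^{-m}\}$ and Fact~\ref{F:h in Ds&C01}(c)), whereas $h(c,b)\le1.1$ is uniformly bounded (using the upper (\BPt) bound and that $\bp$ is large between $b$ and $c$). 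So $h(c,b)/h(a,b)\to0$ while $k(c,b)/k(a,b)\eqx1$, which is precisely the incompatible triple you were looking for. The paper then feeds this into the \qsy\ inequality after using Lemma~\ref{L:cfml metrics} (a QS map between conformal-metric copies of $\Om$ induces a Euclidean QC self-map) and Fact~\ref{F:GO-k-RQI} (Gehring--Osgood: QC maps are rough quasihyperbolic quasi-isometries) to control $k$-distances of the image points; no normal-families or rescaling argument is needed. Your instinct to invoke the QC/modulus control on $f$ was correct --- that is exactly Lemma~\ref{L:cfml metrics} plus Gehring--Osgood --- but without the asymmetric radial triple of Lemma~\ref{L:fat ann} the argument does not close.
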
                      
\noindent
Again, Beardon and Pommerenke \cite{BP-beta} established the equivalence of (\ref{TT:QSequiv}.3) and (\ref{TT:QSequiv}.4).

Recently, the first author and Buckley \cite[Theorem~B]{HB-geodesics} demonstrated that $(\Om,k)$ and $(\Om,h)$ are simultaneously Gromov hyperbolic or not, and we call $\Om$ Gromov hyperbolic in the former case.  Our next result stands in stark contrast to \rf{TT:QSequiv}.  Even when $(\Om,k)$ and $(\Om,h)$ are not \qsc ally equivalent, the large scale geometry is the same in both spaces---at least when they are Gromov hyperbolic.  This further enhances \cite[Theorem~A]{HB-geodesics} where we proved that these metric spaces have the same quasi-geodesic curves.

\begin{Thm}  \label{TT:Gromov} 
For any Gromov hyperbolic plane domain $\Om$, the canonical conformal gauges on the Gromov boundaries $\bd_G(\Om,k)$ and $\bd_G(\Om,h)$ are naturally \qsc ally equivalent. \flag{quantitative too, right? at least sorta}
\end{Thm}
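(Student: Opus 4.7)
My strategy is to construct a natural bijection $\Phi : \partial_G(\Omega,k) \to \partial_G(\Omega,h)$ between the Gromov boundaries and then verify that it is quasisymmetric with respect to visual metrics representing the two canonical conformal gauges.

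For the bijection, fix a basepoint $o \in \Omega$. The Gromov boundary of a Gromov hyperbolic geodesic space is parametrized by asymptote classes of quasi-geodesic rays from $o$; by Theorem~A of \cite{HB-geodesics}, a curve is a quasi-geodesic in $(\Omega,k)$ if and only if it is a quasi-geodesic in $(\Omega,h)$, so these families of rays coincide as point sets. Define $\Phi$ by sending the $k$-asymptote class of a ray $\gamma$ to its $h$-asymptote class. Because the identity $(\Omega,k) \to (\Omega,h)$ is $2$-Lipschitz, $k$-asymptotic rays stay $h$-asymptotic, so $\Phi$ is well defined. Injectivity and surjectivity will use the simultaneous Gromov hyperbolicity from \cite[Thm.~B]{HB-geodesics}, together with stability of quasi-geodesics in both metrics: two quasi-geodesic rays leading to distinct $k$-boundary points cannot be $h$-asymptotic, for by stability they would then lie within bounded $k$-Hausdorff distance, a contradiction.

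For the comparison of Gromov products, fix $\xi,\eta \in \partial_G(\Omega,k)$ and choose a $k$-geodesic $\gamma$ between them; by Theorem~A, $\gamma$ is also an $h$-quasi-geodesic joining $\Phi\xi$ and $\Phi\eta$. The standard identification of Gromov products at infinity with the distance from $o$ to a joining quasi-geodesic, up to a uniformly bounded additive error, yields
\[
  (\xi|\eta)_o^k = k(o,\gamma) + O(1), \qquad (\Phi\xi|\Phi\eta)_o^h = h(o,\gamma) + O(1),
\]
with constants depending only on the Gromov hyperbolicity data and the quasi-geodesic constants. The pointwise inequality $h \le 2k$ immediately gives $(\Phi\xi|\Phi\eta)_o^h \le 2\,(\xi|\eta)_o^k + O(1)$. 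The reverse inequality $k(o,\gamma) \le C\, h(o,\gamma) + O(1)$ is the heart of the argument: the plan is to show that the nearest-approach portion of $\gamma$ to $o$ lies in a controlled $h$-neighborhood of $o$, inside which $\delta(z)$ is comparable to $\delta(o)$ so that the densities $\lambda$ and $1/\delta$ are comparable; combined with the stability of projections in Gromov hyperbolic spaces, which forces the $k$- and $h$-closest points on $\gamma$ to be near each other, this should deliver the required bound.

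A two-sided linear comparison between $(\xi|\eta)_o^k$ and $(\Phi\xi|\Phi\eta)_o^h$ translates into a power quasisymmetry between visual metrics $\rho_k(\xi,\eta) \asymp e^{-\veps (\xi|\eta)_o^k}$ and $\rho_h(\Phi\xi,\Phi\eta) \asymp e^{-\veps (\Phi\xi|\Phi\eta)_o^h}$, by a standard argument from the theory of visual metrics on Gromov boundaries; since each conformal gauge is the QS-equivalence class of any of its visual metrics, this yields the natural QS equivalence of gauges, with quantitative constants depending only on the Gromov hyperbolicity data and the Lipschitz constant~$2$. The principal obstacle throughout is the reverse Gromov-product inequality: the $2$-Lipschitz bound runs the wrong way, and one must combine the common quasi-geodesic structure of \cite[Thm.~A]{HB-geodesics} with the simultaneous Gromov hyperbolicity of both metrics to control the geometry of the joining geodesic near its closest approach to $o$.
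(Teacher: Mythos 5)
Your bijection between boundaries is sound (it is essentially the same identification the paper uses, via the coincidence of quasi-geodesics from \cite[Thm.~A]{HB-geodesics}), but the plan for the quantitative comparison has a fatal flaw: the two-sided \emph{linear} Gromov-product comparison you seek is impossible in general. A bound of the form $(\xi|\eta)_o^k \le C\,(\Phi\xi|\Phi\eta)_o^h + O(1)$ (equivalently $k(o,\gamma) \le C\,h(o,\gamma) + O(1)$), together with the easy $2$-Lipschitz bound in the other direction, would force the induced boundary map to be a \emph{power} quasisymmetry; yet \rf{TT:example} of this very paper produces a uniform (hence Gromov hyperbolic) domain for which no power quasisymmetry between $\bd_G(\Om,k)$ and $\bd_G(\Om,h)$ exists. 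Indeed, the example $\Om=\mfC\sm\{a_n\}$ with $a_{n+1}/a_n\to\infty$ shows that $k(x_n,x_{n+1})\asymp\log(x_{n+1}/x_n)$ grows without bound while $h(x_n,x_{n+1})\lex\log\log(x_{n+1}/x_n)$, so no linear bound of $k$ by $h$ along a geodesic ray toward $\infty$ can hold, and your claim that $\del$ is comparable to $\del(o)$ on the relevant portion of $\gam$ fails badly in such non--uniformly-perfect domains. The ``heart of the argument'' you propose cannot be carried out.

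The paper avoids direct comparison of Gromov products entirely. It first normalizes $\Om$, conformally maps it to a horizontal slit domain $\Om'$ (so $(\Om',\lsig)$ is uniform by an LLC$_2$ argument and \cite[Prop.~7.12]{BHK-unif}), and then uses the Bonk--Heinonen--Koskela uniformization theorem \cite[Thm.~3.6]{BHK-unif} plus its hyperbolic analog \rf{T:unif=>QS} to show that \emph{both} $\bd_G(\Om,k)$ and $\bd_G(\Om,h)$ are QS-equivalent to the metric boundary $\bd(\Om,\lsig)$ of the length space. Chaining these equivalences (along with the Gehring--Osgood fact that conformal maps are quasihyperbolic bi-Lipschitz) gives the result. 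Crucially, this chain produces a QS map whose distortion function need not be a power, which is exactly what accommodates the non-quasi-isometric examples. If you want to salvage your strategy, you must replace the sought linear Gromov-product bound with something weaker---a nonlinear, $\eta$-type control---and that is, in effect, what routing through $\bd(\Om,\lsig)$ accomplishes.
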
                      

When $(\Om,h)$ and $(\Om,k)$ are \BL\ equivalent, or just quasiisometrically\footnote{%
Our quasiisometries are sometimes called rough \BL\ maps.}
equivalent, the above Gromov boundary equivalency is given via a power \qsy.  Whereas our proof of \rf{TT:QSequiv} is surprisingly simple, the proof of \rf{TT:Gromov} employs significant machinery as explained in the first paragraph of \rf{s:pfThmGromov}.

Again, when $\RS\sm\Om$ is uniformly perfect, $(\Om,k)$ and $(\Om,h)$ are \BL\ equivalent.  A natural conjecture is that this uniform perfectness might be a neccessary condition for $(\Om,k)$ and $(\Om,h)$ to be quasiisometrically equivalent.  However, our next result (which, initially, was unexpected to these authors) reveals that this is not the case.

\begin{Thm}  \label{TT:isolated} 
For any finitely connected hyperbolic plane domain $\Om$, the metric spaces $(\Om,k)$ and $(\Om,h)$ are quasiisometrically equivalent.
\end{Thm}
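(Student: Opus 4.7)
The plan is to build an explicit quasiisometry $\Phi: (\Om, k) \to (\Om, h)$. Away from the finitely many isolated boundary points of $\Om$, the two metrics are bi-Lipschitz equivalent via the identity, by Beardon--Pommerenke applied to the uniformly perfect set obtained by discarding those isolated points from $\bOm$. Near each isolated boundary point $p_i$, the hyperbolic metric sees a parabolic cusp while the quasihyperbolic metric sees a flat half-infinite cylinder; these are related by an exponential rescaling of the radial coordinate, which I will absorb via a carefully chosen radial map.

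Let $p_1, \ldots, p_m$ denote the isolated boundary points of $\Om$. Choose pairwise disjoint closed disks $\CB(p_i, r_i) \subset \Om \cup \{p_i\}$ with $r_i < \dist(p_i, \bOm \setminus \{p_i\})$, and set $V_i := \B(p_i, r_i/2) \setminus \{p_i\}$ and $\Om_* := \Om \setminus \bigcup_i V_i$. Let $E$ denote the union of the non-degenerate components of $\RS \setminus \Om$; since $E$ has finitely many components, each non-degenerate, $E$ is uniformly perfect. Applying the Beardon--Pommerenke theorem to $\Om' := \RS \setminus E$ gives $\lam_{\Om'} \asymp 1/\del_{\Om'}$ on $\Om'$. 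A localization/monotonicity argument yields $\lam_\Om \asymp \lam_{\Om'}$ and $\del_\Om \asymp \del_{\Om'}$ uniformly on $\Om_*$, so $\lam_\Om \asymp 1/\del_\Om$ on $\Om_*$, which means the identity is bi-Lipschitz between the restrictions of $k$ and $h$ to $\Om_*$. On each $V_i$, using the coordinates $s := \log(r_i/|z-p_i|) \in [\log 2, \infty)$ and $\theta := \arg(z - p_i)$, the standard asymptotic $\lam_\Om(z) \asymp 1/(|z - p_i|\log(r_i/|z - p_i|))$ implies that the restriction of $k$ to $V_i$ is bi-Lipschitz to the flat cylindrical metric $ds^2 + d\theta^2$, and the restriction of $h$ to $V_i$ is bi-Lipschitz to the upper-half-plane quotient metric $(ds^2 + d\theta^2)/s^2$. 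I define $\Phi$ to be the identity on $\Om_*$, and on each $V_i$ set $\Phi(p_i + \rho e^{i\theta}) := p_i + f(\rho)\, e^{i\theta}$ with $f(\rho) := r_i \cdot 2^{-r_i/(2\rho)}$; in $(s,\theta)$-coordinates this is $(s, \theta) \mapsto ((\log 2)\,e^s/2,\, \theta)$. A direct calculation in the upper-half-plane model verifies that this is a quasiisometry from the flat half-cylinder to the half-plane quotient: the radial distance between $\Phi$-images at the same $\theta$ equals $|s_1 - s_2|$, matching the cylinder distance exactly, while angular distances in both metrics are uniformly bounded.

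It remains to verify that $\Phi$ is globally a quasiisometry from $(\Om, k)$ to $(\Om, h)$. For $x, y$ in the same piece of the decomposition $\{\Om_*, V_1, \ldots, V_m\}$, the local estimates above suffice. For $x, y$ in different pieces, both the $k$- and $h$-geodesic between $x$ and $y$ must cross some subset of the interface circles $\bd V_i$, and I decompose the distances additively across those crossings via the triangle inequality and the local estimates. The main obstacle is this patching step: one must verify that both $k$- and $h$-geodesics exit each $V_i$ through a region of uniformly bounded diameter in both metrics, so that the additive error incurred by splitting at the interfaces is uniformly bounded. Given the explicit form of $\Phi$ and the local bi-Lipschitz and QI estimates, this patching is essentially standard but requires careful checking.
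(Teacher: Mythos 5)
Your overall strategy matches the paper's: decompose $\Om$ into a ``good'' part where $h$ and $k$ are bi-Lipschitz via the identity, and small punctured disks near the isolated boundary points where an explicit radial map (essentially $s\mapsto e^s$ in log-radial coordinates) roughly isometrizes the cusp to the half-cylinder. Your radial map is the inverse of the paper's map $\Phi_p$ from Proposition~\ref{P:QI equiv punx disks}. However, there are two gaps.

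The more serious one is the step you yourself flag as ``essentially standard but requires careful checking.'' Having $\lam\asymp 1/\del$ pointwise on $\Om_*$ controls $\ell_h(\gam)\asymp\ell_k(\gam)$ only for paths $\gam$ that stay inside $\Om_*$. But $h$ and $k$ in $\Om$ are infima over \emph{all} paths in $\Om$, and an $h$- or $k$-geodesic between points of $\Om_*$ can and will dip into the $V_i$'s. So your sentence ``For $x,y$ in the same piece\dots the local estimates above suffice'' is already unjustified for $x,y\in\Om_*$: you need an a priori bound on how deep into each $V_i$ such a geodesic can penetrate, and a bound on how many times it can cross each interface circle. The paper supplies exactly this via the ABC property (Facts~\ref{F:ABC info}), showing geodesics with endpoints in $\Om_\Del$ remain in the slightly larger region $\Om_{\tilde\Del}$ on which $\bp$ is still bounded, and then the bi-Lipschitz conclusion for $\id$ on $\Om_\Del$ follows. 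Without this ingredient (or a substitute), the patching and the within-$\Om_*$ case both remain unproved.

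A secondary gap: your route to the bi-Lipschitz estimate on $\Om_*$ goes through Beardon--Pommerenke applied to $\Om':=\RS\sm E$, where $E$ is the union of the \emph{non-degenerate} boundary components. When $\Om$ is a plane minus finitely many points (e.g.\ $\Coo$ or $\mfC\sm\{0,1,2\}$), $E$ is empty and $\Om'=\RS$ is not a hyperbolic domain, so this argument does not even get started, yet these are precisely the hardest finitely connected examples. You also need a justification of the ``localization/monotonicity argument'' beyond a claim: domain monotonicity gives $\lam_\Om\ge\lam_{\Om'}$ for free, but the reverse comparison (and the comparison of $\del_\Om$ with $\del_{\Om'}$) on $\Om_*$ requires an actual estimate. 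The cleanest fix, and what the paper in effect does in the proof of Theorem~\ref{T:UP+S}, is to estimate $\bp_\Om$ directly on the fattened complement $\Om_{\tilde\Del}$ using the geometry of the Beardon--Pommerenke annuli and the uniform perfectness of $(\RS\sm\Om)\cup\Del$ (disks around the punctures included), rather than excising the punctures from the boundary set.
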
                        

The above is just an easy to state special case of our more general \rf{T:UP+S} which provides a large class of plane domains whose hyperbolizations and quasihyperbolizations are \qic ally equivalent.  This raises the natural question of whether the conclusion of \rf{TT:isolated} could be true in general, and we answer this below.  Note that a quasiisometry can have an arbitrarily large additive rough constant and this obstacle must be overcome.

\begin{Thm}  \label{TT:example} 
There is a uniform (hence Gromov) hyperbolic plane domain $\Om$ with the property that any \qsc\ equivalence between $\bd_G(\Om,k)$ and $\bd_G(\Om,h)$, e.g., that given by \rf{TT:Gromov}, is \emph{not} via a \emph{power} \qsy.  In particular,  $(\Om,k)$ and $(\Om,h)$ are \emph{not} quasiisometrically equivalent.
\end{Thm}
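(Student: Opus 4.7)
The plan leverages the standard fact that between visual Gromov hyperbolic spaces, any quasiisometry induces a power (indeed snowflake) quasisymmetry between the Gromov boundaries. Thus the second assertion of the theorem follows automatically once the first is established, and I focus on constructing a uniform hyperbolic plane domain $\Om$ for which the canonical boundary quasisymmetry of Theorem~\ref{TT:Gromov} is demonstrably not of power type. Theorems~\ref{TT:isolated} and~\ref{TT:QSequiv} force such an $\Om$ to be infinitely connected with $\RS\sm\Om$ not uniformly perfect.

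I would take $\Om:=\mfC\setminus\bigcup_{n\geq 1}K_n$ with $K_n=\bar{\B}(c_n,\rho_n)$, the centers $c_n$ spread at Euclidean scale $\eqx 2^{-n}$ and the radii decaying extraordinarily fast, say $\rho_n=2^{-2^n}$. The rapid radius decay breaks uniform perfectness at scale $c_n$, yet the regular spacing should allow direct verification that $\Om$ is a (classical) uniform domain and that both $(\Om,k)$ and $(\Om,h)$ are Gromov hyperbolic, via slim-triangle arguments comparing geodesics to the tree-like skeleton of paths that skirt each $K_n$.

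Fix a basepoint $o\in\Om$ away from all the $K_n$, and for each large $n$ choose two points $a_n,b_n\in\bd_G\Om$ corresponding to distinct points of $\bd K_n$, together with a fixed remote reference $c_0\in\bd_G\Om$. A direct radial integration gives $k(o,K_n)\eqx\log(1/\rho_n)\eqx 2^n$. In contrast, monotonicity of the Poincar\'e metric combined with the explicit formula for the hyperbolic metric of the model domain $\mfC\sm K_n$ (via pullback from the punctured disk through $z\mapsto\rho_n/(z-c_n)$) yields the much smaller estimate $h(o,K_n)\eqx\log\log(1/\rho_n)\eqx n$. The corresponding visual Gromov boundary metrics, for a fixed visual parameter $\epsilon>0$, then satisfy $d_k(a_n,b_n)\eqx e^{-\epsilon 2^n}$ and $d_h(a_n,b_n)\eqx e^{-\epsilon n}$, whereas $d_k(a_n,c_0)\eqx d_h(a_n,c_0)\eqx 1$. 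A power quasisymmetry $\eta(t)=C\max(t^\beta,t^{1/\beta})$ would require $e^{-\epsilon n}\leq C e^{-\epsilon\beta 2^n}$, equivalently $\epsilon\beta 2^n-\epsilon n\leq\log C$, an inequality that fails dramatically once $n$ is large.

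The main obstacle will be the sharp two-sided estimate $h(o,K_n)\eqx\log\log(1/\rho_n)$, the quantitative form of Beardon--Pommerenke near an isolated thin boundary piece; I expect it to follow from monotonicity of $\lam_\Om$ under inclusion combined with the explicit model calculation in $\mfC\sm K_n$. Secondary obstacles are verifying that the constructed $\Om$ is genuinely uniform and that $(\Om,k)$ and $(\Om,h)$ are Gromov hyperbolic---these should be accessible via the machinery developed in the prior cited work on quasihyperbolic geometry.
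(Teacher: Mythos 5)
There is a genuine gap, and it sits at the logical heart of the theorem.

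Your argument estimates the two visual distances on the Gromov boundary using the \emph{natural} identification of $\bd_G(\Om,k)$ with $\bd_G(\Om,h)$ (both being identified with the length boundary of $\Om$), and then shows that this particular bijection cannot satisfy a power-type distortion bound. But the theorem asserts that \emph{no} \qsc\ equivalence $\bd_G(\Om,k)\to\bd_G(\Om,h)$ is a power \qsy. This stronger statement is exactly what is needed for the ``in particular'' clause: a \qiy\ between $(\Om,k)$ and $(\Om,h)$ induces \emph{some} power \qsy\ of the boundaries, but there is no reason it should be the natural one. Ruling out the natural map tells you nothing about the others, so your argument does not yield the non-QI conclusion. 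Concretely: when you write $d_k(a_n,b_n)\eqx e^{-\veps 2^n}$ and $d_h(a_n,b_n)\eqx e^{-\veps n}$ and then insert these into the power-QS inequality, you are tacitly assuming the putative QS map sends $a_n\mapsto a_n$ and $b_n\mapsto b_n$; an arbitrary QS might scramble these labels arbitrarily, and your boundary (a union of circles accumulating at a point, plus $\infty$) has a huge group of self-homeomorphisms that could absorb the discrepancy.

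The paper's construction is chosen precisely to close this gap. It takes $\Om=\mfC\sm A$ where $A=\{a_n\}\cup\{0\}$ with $a_{n+1}/a_n\to\infty$, so that $\bd_G\Om\cong A\cup\{\infty\}$ is a countable discrete set with a single accumulation point. The crucial ingredient you are missing is \rf{L:QSid}: for such a set, \emph{any} QS self-homeomorphism must fix $\infty$ (the only non-isolated point) and, because of the super-exponential gaps $a_{n+1}/a_n\to\infty$, must be ``eventually the identity'' on the $a_n$. This rigidity reduces the case of an arbitrary QS to (a tail of) the natural one, after which the $k$-versus-$h$ divergence argument---which is in the same spirit as yours, comparing $k(x_n,x_{n+1})\eqx\log(x_{n+1}/x_n)$ to $h(x_n,x_{n+1})\lex\log\log(x_{n+1}/x_n)$ via \rf{X:h est}---finishes the proof. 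Your quantitative instinct about the $\log$ vs.\ $\log\log$ discrepancy is correct and matches the paper, but without a rigidity lemma controlling arbitrary QS self-maps of the boundary, the argument cannot reach the stated conclusion. Replacing your circle components with isolated punctures and building in the rigidity of \rf{L:QSid} would repair the proof.
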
                       

\medskip

\rf{S:Prelims} contains the usual definitions and terminology; especially, see \rf{ss:quasihyp metric} and \rf{ss:hyp metric} for details about the hyperbolic and quasihyperbolic metrics.  We prove \rfs{TT:QSequiv}, \ref{TT:Gromov}, \ref{TT:isolated}, \ref{TT:example} in \rfs{s:pfThmQSequiv}, \ref{s:pfThmGromov}, \ref{s:pfThmIsolated}, \ref{s:example} respectively. 
%
\section{Preliminaries}  \label{S:Prelims} 
%
We work in the Euclidean plane, and on the Riemann sphere, which we identify, respectively, with the complex number field $\mfC$ and its one-point extension $\RS:=\mfC\cup\{\infty\}$.  Everywhere $\Om$ is a domain (i.e., an open connected set) and $\bOm$ and $\hat\bOm$ denote the boundary of $\Om$ \wrt the plane and sphere (respectively)   Always, $\Om$ is a \emph{hyperbolic domain}, i.e., $\RS\sm\Om$ contains at least three points.

We write $\Ah$ and $\hat{\bd}A$ for the closure and boundary of a set $A$ in $\RS$ whereas $\cA$ and $\bA$ are the Euclidean closure and boundry of $A$.

We write $C=C(D,\dots)$ to indicate a constant $C$ that depends only on the data $D,\dots$.  In some cases we write $K_1\lex K_2$ to indicate that $K_1\le C\,K_2$ for some computable constant $C$ that depends only on the relevant data, and $K_1\eqx K_2$ means $K_1\lex K_2\lex K_1$.

The Euclidean line segment joining two points $a,b$ is $[a,b]$, and $(a,b)=[a,b]\sm\{a,b\}$.  The open and closed Euclidean disks, and the circle, centered at the point $a\in\mfC$ and of radius $r>0$, are denoted by $\D(a;r)$ and $\D[a;r]$ and $\mathsf{S}^1(a;r)$ respectively, and $\mfD:=\D(0;1)$ is the unit disk.  We also define
\[
  \Cstar:=\mfC\sm\{0\} \,,\quad \Cab:=\mfC\sm\{a,b\} \,,\quad \Dstar:=\mfD\sm\{0\} \,, \quad \mfD^\star:=\mfC\sm\cmfD\,;
\]
the definition of $\Cab$ is for distinct points $a$, $b$ in $\mfC$.

The quantity $\del(z)=\del_\Om(z):=\dist(z,\bOm)$ is the Euclidean distance from $z\in\mfC$ to the boundary of $\Om$, and $1/\del$ is the scaling factor (aka, metric-density) for the so-called \emph{quasihyperbolic} metric $\del^{-1}ds$ on $\Om\subset\mfC$; see \rf{ss:quasihyp metric}.  We use the notation
\begin{gather*}
  \D(z)=\D_{\Om}(z):=\D\bigl(z;\del(z)\bigr)=\D\bigl(z;\del_\Om(z)\bigr)
  \intertext{for the maximal Euclidean disk in $\Om$ centered at a point $z\in\Om$, and then}
  \B(z)=\B_\Om(z):=\bd\D(z)\cap\bOm = \Sone\bigl(z;\del(z)\bigr)\cap\bOm
\end{gather*}
is the set of all nearest boundary points for $z$.

The \emph{chordal} and \emph{spherical} distances on $\RS$ are $\chi$ and $\sig$, respectively.  Thus
\[
  \chi(z,w):=\begin{cases}
    \ds \frac{2|z-w|}{\sqrt{1+|z|^2}\sqrt{1+|w|^2}} \quad&\text{if $z,w\in\mfC$} \\
    \ds \frac{2}{\sqrt{1+|z|^2}} &\text{if $z\in\mfC,w=\infty$}
  \end{cases}
\]
and $\sig$ is the length distance associated with $\chi$, $\chi=2\sin(\sig/2)$, and $\chi\le\sig\le(\pi/2)\chi$.\footnote{This follows at once when we identify $\RS$ with the unit sphere in $\mfR^3$ where we find that $\sig(u,v)$ is just the angle between $u,v$.}
Calculations are easier with $\chi$, but  $\sig$ is a geodesic distance whereas $\chi$ is not geodesic.

Each of the metric spaces $(\Om,\ed),(\Om,\chi),(\Om,\sig)$ has an associated length distance, although the latter two are equal.  We write $l=l_\Om$ for the intrinsic (aka, inner) Euclidean length distance and $\lsig=l_{\Om_\sig}$ for the intrinsic chordal length distance (which equals the intrinsic spherical length distance:-), and then $(\Om,l)$ and $(\Om,\lsig)$ are the corresponding length spaces.  See for example \cite{DAH-diam-dist}.

It is convenient to let $\chi(z)$ and $\sig(z)$ denote the chordal and spherical distances from $z$ to $\hat{\bd}\Om$.  Again, $\chi(z)\le\sig(z)\le(\pi/2)\chi(z)$, and we note that
\[
  \sig(z):=\dist_\sig(z,\bOmh)=\dist_{\lsig}\bigl(z,\bd(\Om,\lsig)\bigr)
\]
where $\bd(\Om,\lsig)$ is the metric boundary of $(\Om,\lsig)$.

\subsection{Maps, Paths, and Geodesics}     \label{ss:MPG} %
An embedding $X\xra{f}Y$ between two metric spaces is a \emph{\qsy} if there is a \homeo\ $\eta:[0,\infty)\to[0,\infty)$ (called a \emph{distortion function}) such that for all triples $x,y,z\in X$,
\[
  |x-y|\le t |x-z| \implies |fx-fy| \le \eta(t) |fx-fz|\,;
\]
when this holds, we say that $f$ is $\eta$-QS.  These mappings were studied by Tukia and V\"ais\"al\"a in \cite{TV-qs}; see also \cite{Juha-analysis}.

The \emph{\BL} maps form an important subclass of the \qsc\ maps; $X\xra{f}Y$ is \BL\ \ifff there is a constant $L$ such that for all $x,y\in X$,
\[
  L^{-1} |x-y|\le |fx-fy|\le L|x-y|,
\]
and when this holds we say that $f$ is $L$-\BL; such an $f$ is $\eta$-QS with $\eta(t):=Lt$.


More generally, a map $X\xra{f}Y$ is an \emph{$(L,C)$-\qiy} if $L\ge1$, $C\ge0$ and for all $x,y\in X$,
\[
  L^{-1} |x-y| -C \le |fx-fy| \le L|x-y| + C.
\]
These are often called \emph{rough \BL} maps, and there seems to be no universal agreement regarding this terminology; some authors use the adjective \qiy\ to mean what we have called \BL, and then a rough \qiy\ satisfies our definition of \qiy.  A $(1,0)$-\qiy\ is simply an \emph{isometry} (onto its range), and a $(1,C)$-\qiy\ is called a \emph{$C$-rough isometry}.

Two metric spaces $X,Y$ are \emph{isometrically equivalent} (or \emph{BL, QS, QC equivalent}, respectively) \ifff there is a bijection $X\to Y$ that is an isometry (or BL, QS, or QC).

Also, $X,Y$ are \emph{quasiisometrically equivalent} \ifff there is a \qiy\ $X\xra{f}Y$ with the property that $f(X)$ is \emph{cobounded} in $Y$ (i.e., the Hausdorff distance between $f(X)$ and $Y$ is finite).  More precisely: $X,Y$ are \emph{$(L,C)$-QI equivalent} if there is an $(L,C)$-\qiy\ $f:X\to Y$ and for each $y\in Y$ there is an $x\in X$ with $|y-f(x)|\le C$.  An alternative way to describe this is to say that there are quasiisometries in both directions that are rough inverses of each other. 

\begin{xx}  \label{X:QIEqX} %
Suppose $X\subset Y$ and for each $y\in Y$ there is an $x_y\in X$ with $|x_y-y|\le C$.  The maps
    \[
      X\overset{\id}{\hookrightarrow}Y\xra{f}X\,, \quad\text{where}\quad f(y):=
        \begin{cases}
          y \quad&\text{if $y\in X$}\\
          x_y &\text{if $y\notin X$}
        \end{cases}\,,
    \]
    are both rough isometric equivalences: the ``identity'' inclusion is a $(1,C)$-QI equivalence and $f$ is a $(1,2C)$-QI equivalence.
\end{xx} %

Our metric spaces will always be the domain $\Om$, either in $\mfC$ or in $\RS$, with either Euclidean distance, chordal distance, spherical distance, an associated length distance, an associated quasihyperbolic distance, or an associated hyperbolic distance.

\medskip

A \emph{path in $X$} is a continuous map $\mfR\supset I\xra{\gam}X$ where $I=I_\gam$ is an interval (called the \emph{parameter interval for $\gam$}) that may be closed or open or neither and finite or infinite.  The \emph{trajectory} of such a path $\gam$ is $|\gam|:=\gam(I)$ which we call a \emph{curve}.  When $I$ is closed and $I\ne\mfR$, $\bd\gam:=\gam(\bd I)$ denotes the set of \emph{endpoints of $\gam$} which consists of one or two points depending on whether or not $I$ is compact.  For example, if $I_\gam=[0,1]\subset\mfR$, then $\bd\gam=\{\gam(0),\gam(1)\}$.

We call $\gam$ a \emph{compact path} if its parameter interval $I$ is compact (which we often assume to be $[0,1]$).  We call $\gam$ a \emph{rectifiable path} if its length $\ell(\gam)$ is finite, and then we may assume that $\gam$ is parameterized \wrt arclength in which case the parameter interval for $\gam$ is $[0,\ell(\gam)]$.  We note that arclength \param s are \emph{a priori} $1$-Lipschitz continuous.

When $\bd\gam=\{a,b\}$, we write $\gam:a\curvearrowright b$ (\emph{in $\Om$}) to indicate that $\gam$ is a path (in $\Om$) with \emph{initial point} $a$ and \emph{terminal point} $b$; this notation implies an orientation: $a$ precedes $b$ on $\gam$.

When $\alf:a\cra b$ and $\beta:b\cra c$ are paths that join $a$ to $b$ and $b$ to $c$ respectively, $\alf\star\beta$ denotes  the concatenation of $\alf$ and $\beta$; so $\alf\star\beta:a \cra c$.  Of course, $|\alf\star\beta|=|\alf|\cup|\beta|$.
Also, the \emph{reverse of $\gam$} is the path $\gam^{-1}$ defined by $\gam^{-1}(t):=\gam(1-t)$ (when $I_\gam=[0,1]$) and going from $\gam(1)$ to $\gam(0)$.  Of course, $|\gam^{-1}|=|\gam|$.

An \emph{arc} $\alf$ is an injective compact path.  Every arc is taken to be ordered from its initial point to its terminal point.  Given points $a,b\in|\alf|$, there are unique $u,v\in I$ with $\alf(u)=a$, $\alf(v)=b$ and we write $\alf[a,b]:=\alf\vert_{[u,v]}$. 
Every compact path contains an arc with the same endpoints; see \cite{V-exhsJohn}.

\medskip

A path $I\xra{\gam}X$ into a metric space $X$ is a \emph{geodesic} if $\gam$ is an isometry (for all $s,t\in I$, $|\gam(s)-\gam(t)|=|s-t|$) and a \emph{$K$-quasi-geodesic} if $\gam$ is $K$-\BL,\footnote{%
One can also consider rough-quasi-geodesics where $\gam$ is a quasiisometry; we do not use these here.}
\[
  \text{for all $s,t\in I$}\,, \quad K^{-1}|s-t| \le |\gam(s)-\gam(t)| \le K|s-t|.
\]
A characteristic property of geodesics is that the length of each subpath equals the distance between its endpoints.  There is a corresponding description for quasi-geodesics: $I\xra{\gam}X$ is an \emph{$L$-chordarc path} if it is rectifiable and
\[
  \text{for all $s,t\in I$}\,, \quad \ell(\gam\vert_{[s,t]}) \le L\,|\gam(s)-\gam(t)|.
\]
If we ignore parameterizations, then the class of all quasi-geodesics (in some metric space) is exactly the same as the class of all chordarc paths.  More precisely, a $K$-quasi-geodesic is a $K^2$-chordarc path, and if we parameterize an $L$-chordarc path \wrt arclength, then we get an $L$-quasi-geodesic.

\smallskip

In this paper we study the metric spaces $(\Om,h)$ or $(\Om,k)$ where $\Om$ is a hyperbolic plane domain and $h$ and $k$ are the hyperbolic and quasihyperbolic distances in $\Om$.  The geodesics and quasi-geodesics in $(\Om,h)$ are called \emph{hyperbolic} geodesics and \emph{hyperbolic} quasi-geodesics, and similarly in $(\Om,k)$ we attach the adjective \emph{quasihyperbolic}.

\subsection{Annuli and Uniformly Perfect Sets}      \label{s:A&UPS} %
Given a point $c\in\mfC$ and $0<r<R<+\infty$, 
$A:=\Set{z\in\mfC | r<\abs{z-c}<R}$ is an \emph{Euclidean annulus} with \emph{center} $c(A):=c$ and \emph{conformal modulus} $\md(A):=\log(R/r)$; if $r=0$ or $R=\infty$, $A$ is a \emph{degenerate annulus} and  $\md(A):=+\infty$.
We call $\mathsf{S}^1(A):=\mathsf{S}^1(c;\sqrt{rR})$ the \emph{conformal center circle} of $A$; $A$ is symmetric about this circle.  The \emph{inner} and \emph{outer boundary circles} of $A$ are, respectively,
\[
  \bdi A:=\mathsf{S}^1(c;r) \quad\text{and}\quad  \bdo A:=\mathsf{S}^1(c;R)\,.
\]
A point $z$ is \emph{inside} (\emph{outside}) $A$ \ifff $z\in\Ain:=\B[c;r]$ ($z\in\Aout:=\mfC\sm\B(c;R)$); that is, $z$ is inside (or outside) $A$ \ifff $z$ is inside $\bdi A$ (or outside $\bdo A$).

It is convenient to introduce the notation
\[
  A=\A(c;d,m):=\Set{z\in\mfC : d\,e^{-m}<|z-c|<d\,e^m}
  \quad\text{and}\quad
  \A[c;d,m]:=\overline{\A(c;d,m)}\,.
\]
Then $\mathsf{S}^1(A)=\mathsf{S}^1(c;d), \bdi A=\mathsf{S}^1(c;d\,e^{-m}), \bdo A=\mathsf{S}^1(c;d\,e^m)$; here $c=c(A), d>0, m>0$.

An annulus $A'$ is a \emph{subannulus of $A$}, denoted by $A'\subann A$, provided
\[
  A'\subset A \quad\text{and}\quad  \Ain\subset\Ain' \quad\text{and}\quad  \Aout\subset\Aout'\,.
\]
Two annuli are \emph{concentric} if they have a common center, and $A'$ is a \emph{concentric subannulus of $A$}, denoted by $A'\csubann A$, provided $c(A')=c(A)$ and $A'$ is a subannulus of $A$.

An annulus $A$ \emph{separates} $E$ if $A\subset\RS\sm E$ and both components of $\RS\sm A$ contains points of $E$; thus when $A$ separates $\Set{a,b}$, one of $a$ or $b$ lies inside $A$ and the other lies outside $A$, and if $A$ does not meet nor separate $\Set{a,b}$, then $a$ and $b$ are \emph{on the same side} of $A$.  Evidently, if $A'$ is a subannulus of $A$, then $A'$ separates the boundary circles of $A$.

We define
\begin{align*}
  \mcA(m)      &:=\Set{A | \text{$A$ is an Euclidean annulus with $\md(A)>m$}}\,, \\
  \mcA_\Om     &:=\Set{A | \text{$A$ is an Euclidean annulus in $\Om$ with $c(A)\in\mfC\sm\Om$}}\,, \\
  \mcA^1_\Om   &:=\Set{A\in\mcA_\Om | \bA\cap\bOm\ne\emptyset}\,,  \\
  \mcA^2_\Om   &:=\Set{A\in\mcA_\Om | \bdi A\cap\bOm\ne\emptyset\ne\bdo A\cap\bOm}\,,  \\
  \mcA_\Om(m)     &:=\mcA_\Om\cap\mcA(m)\,, \;\text{and}\; \mcA_\Om^{s}(m):=\mcA^s_\Om\cap\mcA(m) \quad\text{for $s\in\Set{1,2}$}\,.
\end{align*}
The requirement $c(A)\in\mfC\sm\Om$ means that $A$ separates $(\mfC\sm\Om)\cup\Set{\infty}$.

Following Pommerenke \cite{Pomm-unifly-perfect1}, we say that $E\subset\RS$ is \emph{$M$-uniformly perfect} \ifff $E$ is closed, $E$ contains the point at infinity, and
\begin{equation}\label{E:bounded mod}
  \sup_{A\in\mcA_{\mfC\sm E}} \md(A) \le M \,.
\end{equation}
Pommerenke \cite{Pomm-unifly-perfect2} established a number of equivalent conditions; see also \cite{Sugawa-unif-perfect-geom}, \cite{HLM-sep-rings}.

Heinonen \cite{Juha-analysis} has a general metric space definition for uniform perfectness (which appears as item (\ref{P:UP equivs}.e) below) which is similar to, but different from, Pommerenke's definition.  Nonetheless, the following result shows that the Pommerenke and Heinonen definitions are equivalent; this must be folklore, but we do not know a reference.  Recall that a \emph{ring domain} is a topological annulus, and a ring domain $D\subset\RS$ \emph{separates} a set $E\subset\RS$ \ifff $D\subset\RS\sm E$ and both components of $\RS\sm D$ contain points of $E$.

\newcommand{\Rk}{\hat{R}}  
\newcommand{\rh}{\hat{r}}
\newcommand{\Mh}{\hat{M}}
\begin{prop} \label{P:UP equivs} %
For any closed set $E\subset\RS$ with $\infty\in E$, \tfaqe:
\begin{enumerate}[\rm(a), wide, labelwidth=!, labelindent=0pt]
  \item  $E$ is $M$-uniformly perfect,
  \item  $\bp\le M$ in $\Om$ for each component $\Om$ of $\RS\sm E$,
  \item  $\md(A)\le M$ for any annulus $A\subset\RS\sm E$ that separates $E$,
  \item  $\md(D)\le M$ for any ring domain $D\subset\RS\sm E$ that separates $E$,
  \item  $\Rk/\rh\le M$ for any $o\in E, \Rk>\rh>0$ with $\D_\chi(o;\Rk)\sm\bar\D_\chi[o;\rh]\subset\RS\sm E$ separating $E$.
\end{enumerate}
The constant $M$ above will vary, but in each case it depends only on the other constants.
\end{prop}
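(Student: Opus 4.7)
My plan is to establish the circular implications
\[
  \text{(a)} \Longleftrightarrow \text{(c)} \Longleftrightarrow \text{(d)} \Longleftrightarrow \text{(e)},
\]
and then to observe that (a) $\Leftrightarrow$ (b) follows directly from the definition of $\bp$ as a componentwise supremum of moduli over the classes $\mcA_\Om$ for components $\Om$ of $\RS\sm E$.

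The heart of the argument is (a) $\Leftrightarrow$ (c). For (c) $\Rightarrow$ (a), each $A\in\mcA_{\mfC\sm E}$ is centered at a point of $E$, so its inner disk encloses a point of $E$ while its unbounded complementary component contains $\infty\in E$; hence $A$ already separates $E$, and (c) applies. For (a) $\Rightarrow$ (c), given an annulus $A=\A(c;d,m)\subset\mfC\sm E$ that separates $E$, both complementary components must meet $E$, so there is some $a\in E$ with $|a-c|\le de^{-m}$. The concentric annulus $A'$ about $a$ with inner radius $2de^{-m}$ and outer radius $de^m-de^{-m}$ is contained in $A$, is centered at a point of $E$, and a short calculation shows its modulus is at least $\md(A)-\log 4$ whenever $\md(A)\ge 1$. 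Thus (a) with constant $M$ yields (c) with constant $M+O(1)$.

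For (c) $\Leftrightarrow$ (d), only (c) $\Rightarrow$ (d) requires argument. Here I would invoke the classical fact that any ring domain $D\subset\RS$ contains a round Euclidean annulus $A\subset D$ whose modulus exceeds $\md(D)-C$ for a universal constant $C$, which follows from standard modulus distortion and exhaustion results for ring domains. Since an inscribed round annulus inherits the property of separating $E$, we are done.

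For (d) $\Leftrightarrow$ (e), the chordal shell $\D_\chi(o;\Rk)\sm\bar\D_\chi[o;\rh]$ with $o\in E$ is a ring domain separating $E$ (its enclosed region contains $o\in E$, while its complement contains $\infty\in E$ or some point of $E$ on the far side), so (d) $\Rightarrow$ (e) follows once one knows that $\log(\Rk/\rh)$ is comparable to the conformal modulus of such a shell up to a universal constant. Conversely, given any ring domain $D$ separating $E$, one picks an enclosed $o\in E$ and takes the maximal concentric spherical shell about $o$ inside $D$; a direct computation after applying a M\"obius transformation sending $o$ to $0$ (with a separate check when $o=\infty$) shows $\log(\Rk/\rh)\eqx\md(D)$ up to universal constants. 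The main obstacle I anticipate is quantitative bookkeeping---each implication loses an additive universal constant, so one must verify that the final constants are honest functions of the initial data---together with careful handling of annuli and rings straddling $\infty$ when translating between the Euclidean, spherical, and topological viewpoints.
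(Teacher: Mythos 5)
Your step $(e)\Rightarrow(d)$ contains a genuine gap. You claim that, given a ring domain $D\subset\RS\sm E$ separating $E$, one can pick an enclosed $o\in E$, take the maximal concentric chordal shell about $o$ inside $D$, and then ``a direct computation shows $\log(\Rk/\rh)\eqx\md(D)$ up to universal constants.'' The direction $\md(D)\lesssim\log(\Rk/\rh)$ is false. Consider $E=[-1,0]\cup[t,\infty]$ with $t$ large and $D=\RS\sm E$ (a Teichm\"uller ring), so $\md(D)\approx\log(16t)\to\infty$. The only enclosed choices are $o\in[-1,0]$. Since $[-1,0]$ is connected and contains $o$, the whole segment must lie inside $\bar\D_\chi[o;\rh]$, forcing $\rh\gtrsim 1$; on the other hand, every chordal distance is at most $2$, so $\Rk\le 2$. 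Thus $\log(\Rk/\rh)=O(1)$ uniformly in $t$, and the shell captures essentially none of the modulus of $D$. The information that detects large $\md(D)$ here comes only from shells centered at points $o\in[t,\infty]$---that is, from the \emph{unbounded} component of $E$---not from an enclosed $o$. Your argument never looks there, so $(e)\Rightarrow(d)$ does not go through as written.

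This is also where your route differs materially from the paper's. The paper closes the loop not via $(e)\Rightarrow(d)$ but via $(e)\Rightarrow(a)$: it starts from a Euclidean annulus $A=\{r<|z-c|<R\}\subset\mfC\sm E$ with $c\in E$ (which automatically separates $E$ since $\infty\in E$), and exhibits a chordal shell \emph{subannulus} of $A$---splitting into the cases $|c|\ge R$, $|c|\le r$, $r<|c|<R$ and choosing the chordal center (either $c$ or $0$, as appropriate) so that the shell lies inside $A$ and the centering hypothesis of $(e)$ is met. Because $A$ is already a round annulus, producing a round chordal subannulus with comparable modulus is an explicit computation; no Teichm\"uller--Gr\"otzsch ``inscribed round annulus'' theorem is needed for that step, and no claim about a \emph{maximal} shell inside a general ring domain is made. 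Your $(a)\Leftrightarrow(c)$ recentering argument and your $(c)\Rightarrow(d)$ appeal to the classical inscribed-round-annulus theorem are both fine (the paper simply cites Pommerenke for the latter), and $(d)\Rightarrow(e)$ is correct since a chordal shell is a ring domain with $\md\ge\log(\Rk/\rh)$. But to repair the chain you should replace $(e)\Rightarrow(d)$ by something like the paper's $(e)\Rightarrow(a)$, which is where the real work of the proposition lies.
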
 
\begin{proof}%
It is easy to check that (a) and (c) are equivalent.
Beardon and Pommerenke \cite{BP-beta} established the equivalence of (c) and (b), and Pommerenke \cite{Pomm-unifly-perfect1},\cite{Pomm-unifly-perfect2}  demonstrated that these are equivalent to (d) (along with several other variations).

That (e) is equivalent to the other conditions is surely well-known, yet does not seem to be explicitly mentioned in the literature, so we outline an explanation for this.

Suppose $o\in E$, $R>r>0$, and $D:=\D_\chi(o;R)\sm\bar\D_\chi[o;r]\subset\RS\sm E$ separates $E$.  Then $D$ is a ring domain, so if (d) holds, then
\[
  \log\frac{R}{r}\le\log\frac{R\sqrt{4-r^2}}{r\sqrt{4-R^2}}=\md(D)\le
 M\,.
\]

Finally, suppose (e) holds with a constant $ M\ge2$.  To establish (a), we verify that \eqref{E:bounded mod} holds with the constant $64M^4$.  Let $A=\Set{r<\abs{z-c}<R}\subset\mfC\sm E$ be an annulus with $c\in E\cap\mfC$; thus $A$ separates $E$.  Assume $R\ge2r$.  Roughly, we exhibit a chordal subannulus $\Ah\subann A$ and use $\Rk/\rh\le M$ to bound $R/r$.


There are three cases depending on whether $|c|\ge R$, $|c|\le r$, or $r<|c|<R$.  Assume $|c|\ge R$.  Here we verify that $R/r\le4M$.  Let
\begin{gather*}
  a:=\bigl(|c|-r\bigr)\frac{c}{|c|}\,,\; b:=\bigl(|c|+R\bigr)\frac{c}{|c|} \,,\; \rh:=\chi(a,c)\,,\; \Rk:=\chi(b,c)\,.
  \intertext{Since $|c|+R\le2|c|$ and $|c|-r\ge\half|c|$,}
  \frac{1+|b|^2}{1+|a|^2} = \frac{1+\bigl(|c|+R\bigr)^2}{1+\bigl(|c|-r\bigr)^2} \le  \frac{1+\bigl(2|c|\bigr)^2}{1+\bigl(|c|/2\bigr)^2} \le  16\,.
  \intertext{Thus if $\rh\ge\Rk$, then $R/r\le4$; so we assume $\rh<\Rk$.  Then}
  \Ah:=\Set{\hat{r}<\chi(z,c)<\hat{R}} \subann A
  \intertext{whence}
   M\ge\frac{\Rk}{\rh}=\frac{R}{r}\biggl(\frac{1+|a|^2}{1+|b|^2}\biggr)^\half\,, \quad\text{so}\quad \frac{R}{r}\le4 M\,.
\end{gather*}

Assume $|c|\le r$.  Here, eventually, we deduce that $R/r\le8M^2$.  Let
\begin{gather*}
  a:=\bigl(|c|+r\bigr)\frac{c}{|c|}\,,\; b:=\bigl(|c|-R\bigr)\frac{c}{|c|} \,,\; \rh:=\chi(a,0)\,,\; \Rk:=\chi(b,0)\,.
  \intertext{If $\rh\ge\Rk$, then $\chi(b,\infty)\ge\chi(a,\infty)$, so again $R/r\le4$.  Suppose $\rh<\Rk$.  Then}
  \Ah:=\Set{\hat{r}<\chi(z,c)<\hat{R}} \subann A\,, \quad\text{whence}\quad \frac{\Rk}{\rh} \le  M\,.
\intertext{We claim that}
  \sqrt{1+|a|^2}\ge\frac{\sqrt2  M}{2 M-1} \quad\text{and}\quad \sqrt{1+|b|^2}\le\sqrt2  M\,;
  \intertext{therefore $\ds M\ge\frac{\Rk}{\rh}=\frac{|b|}{|a|}\biggl(\frac{1+|a|^2}{1+|b|^2}\biggr)^\half$, so}
  \frac{R/2}{2r} \le \frac{R-|c|}{r+|c|} = \frac{|b|}{|a|} \le \biggl(\frac{1+|b|^2}{1+|a|^2}\biggr)^\half M \le M(2M-1)
\end{gather*}
and hence $R/r\le8M^2$.

To corroborate the above claim, we utilize chordal subannuli that have $|z|=1$ as a boundary circle.  If $\sqrt2\le\rh=\chi(a,0)$, then $|a|\ge1$ so $\sqrt{1+|a|^2}\ge\sqrt2M/(2 M-1)$.  Suppose $\rh<\sqrt2$.  Then
\begin{gather*}
  \Set{\rh<\chi(z,0)<\sqrt2}\subann A
  \intertext{so}
  M\ge\frac{\sqrt2}{\rh}=\frac{\sqrt{1+|a|^2}}{\sqrt2|a|}\le\frac{1+|a|}{2|a|}
  \intertext{and therefore}
  \sqrt{1+|a|^2}\ge\frac{1+|a|}{\sqrt2}\ge\frac{\sqrt2M}{2M-1}\,.
\end{gather*}
If $\sqrt2\le\rh_\infty:=\chi(b,\infty)$, then $\sqrt{1+|b|^2}\le\sqrt2\le\sqrt2M$.  Suppose $\rh_\infty<\sqrt2$.  Then
\begin{gather*}
  \Set{\rh_\infty<\chi(z,0)<\sqrt2}\subann A
\end{gather*}
so $\ds  M\ge\frac{\sqrt2}{\rh}=\frac{\sqrt{1+|b|^2}}{\sqrt2}$.

\medskip

Thus when $|c|\le r$ or $|c|\ge R$, $R/r\le 8M^2$.  Suppose $r<|c|<R$, and $R\ge4r$.  Then either $|c|\ge\sqrt{rR}$ or $|c|\le\sqrt{rR}$.  In either case, we can apply the previous arguments to the appropriate subannulus (either
$\{r<|z-c|<\sqrt{Rr}\}$ or $\{\sqrt{Rr}<|z-c|<R\}$) to conclude that in all cases $R/r\le 64 M^4$.
\end{proof}%

\subsection{Conformal Metrics}      \label{s:cfml metrics} %
A continuous function $X\xra{\rho}(0,\infty)$ on a rectifiably connected metric space $X$ induces
a length distance $d_\rho$ on $X$ defined by
\[
  d_\rho(a,b):=\inf_{\gam:a\cra b} \ell_\rho(\gam) \quad\text{where}\quad \ell_\rho(\gam):= \int_\gam \rho\, ds
\]
and where the infimum is taken over all rectifiable paths $\gam:a\cra b$ in $X$.  We describe this by calling $\rho\,ds=\rho(x)|dx|$ a \emph{conformal metric} on $X$.  Below we consider the hyperbolic and quasihyperbolic metrics defined on plane domains.

We call $\gam$ a \emph{$\rho$-geodesic} if $d_\rho(a,b)=\ell_\rho(\gam)$; these need not be unique.  We often write $[a,b]_\rho$ to indicate a $\rho$-geodesic with endpoints $a,b$, but one must be careful with this notation since these geodesics need not be unique.  When $z$ is a fixed point on a given geodesic $[a,b]_\rho$, we write $[a,z]_\rho$ to mean the subarc of the given geodesic from $a$ to $z$.

We note that the ratio $\rho\,ds/\sig\,ds$ of two conformal metrics is a well-defined positive function.  We write $\rho\le C\,\sig$ to indicate that this metric ratio is bounded above by $C$.

Wen $\rho\,ds$ is a conformal metric on $\Om$, we let $\Om_\rho:=(\Om,d_\rho)$.  The following is surely folklore, but we briefly outline a proof which employs standard techniques.

\begin{lma} \label{L:cfml metrics} %
Let $\rho\,ds$ and $\tau\,ds$ be conformal metrics on some plane domain $\Om$.  Then the identity map
\[
  \Om_\rho\xra{\id}\Om_\tau \quad\text{is conformal, i.e., it is metrically $1$-QC}.
\]
Also, if $\Om_\rho\xra{f}\Om_\tau$ is $\eta$-QS, then $f$ induces a map $\Om\xra{f}\Om$ which is $\eta(1)$-QC.
\end{lma}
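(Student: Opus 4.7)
The plan is to reduce both assertions to the standard ``metric'' definition of quasiconformality: a \homeo\ $f:X\to Y$ is $K$-quasiconformal if
\[
  H_f(x):=\limsup_{r\to 0^+}\frac{\sup\{d_Y(fx,fy):d_X(x,y)\le r\}}{\inf\{d_Y(fx,fz):d_X(x,z)\ge r\}}\le K
\]
for every $x\in X$. I will use two ingredients: a pointwise linearization of a conformal metric around each point, and the elementary fact that every $\eta$-QS map is automatically $\eta(1)$-QC in the metric sense.

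For the first assertion, fix $x\in\Om$ and $\veps>0$. Since $\rho$ and $\tau$ are \cts\ and positive, there exists $r_0>0$ such that the Euclidean ball $\D(x;r_0)$ is compactly contained in $\Om$ and $\rho,\tau$ differ from their values at $x$ by a factor within $(1-\veps,1+\veps)$ on this ball. A short length-integral argument then yields, for $|y-x|<r_0$,
\[
  (1-\veps)\rho(x)|y-x|\le d_\rho(x,y)\le(1+\veps)\rho(x)|y-x|,
\]
and similarly with $\tau$ in place of $\rho$; the upper bound is immediate by integrating along $[x,y]$, and the lower bound follows because any rectifiable path leaving $\D(x;r_0)$ has $\rho$-length at least $(1-\veps)\rho(x)r_0$, while a path contained in the ball is bounded below via the straight-segment comparison. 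Consequently both the $\rho$- and $\tau$-metric balls of small radius centered at $x$ are sandwiched between Euclidean balls whose radii differ by a factor arbitrarily close to $1$. Plugging these into the definition of $H_\id(x)$ and letting $\veps\to 0$ gives $H_\id(x)\le 1$, and the reverse inequality is automatic; hence $\id:\Om_\rho\to\Om_\tau$ is $1$-QC.

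For the second assertion, recall that if $f:X\to Y$ is $\eta$-QS and $d_X(x,y)\le d_X(x,z)$, then $d_Y(fx,fy)\le\eta(1)d_Y(fx,fz)$; taking $y,z$ on the same $d_X$-sphere around $x$ shows $H_f(x)\le\eta(1)$ at every point, so any $\eta$-QS map is $\eta(1)$-QC. Apply this to $f:\Om_\rho\to\Om_\tau$; then factor the induced Euclidean map as
\[
  (\Om,\ed)\xra{\id}\Om_\rho\xra{f}\Om_\tau\xra{\id}(\Om,\ed),
\]
and invoke the first part together with the multiplicativity of metric dilatations under composition: the outer two factors are $1$-QC and the middle factor is $\eta(1)$-QC, so the composite is $\eta(1)$-QC.

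The only mildly delicate step is the linearization estimate underlying the first assertion; the main obstacle is justifying the lower bound $d_\rho(x,y)\ge(1-\veps)\rho(x)|y-x|$, which requires a standard two-case argument depending on whether competing paths stay inside $\D(x;r_0)$ or exit it. Everything else is formal manipulation of the definitions of QS and QC.
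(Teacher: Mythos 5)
Your argument for the first assertion is essentially the paper's: you establish the local bi-Lipschitz linearization $(1-\veps)\rho(x)|y-x|\le d_\rho(x,y)\le(1+\veps)\rho(x)|y-x|$ and read off $H_{\id}(x)=1$; the paper does exactly this (its display \eqref{E:dr locBL}), including the two-case lower-bound argument you flag as the only delicate step.

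For the second assertion you take a genuinely different route. The paper directly estimates the Euclidean linear dilatation of $f$ by a single chain of three inequalities: Euclidean-to-$d_\tau$ at $f(a)$, quasisymmetry, and $d_\rho$-to-Euclidean at $a$. You instead observe the general fact that any $\eta$-QS map is $\eta(1)$-QC in the metric sense (which is correct and worth isolating), then factor the Euclidean map as $\id\comp f\comp\id$ and invoke multiplicativity. The one place you should be careful is the phrase ``multiplicativity of metric dilatations under composition'': the pointwise linear dilatation $H$ is \emph{not} multiplicative in general, because $L$ and $l$ are evaluated at different radii after the first map. It does work here — but only because your linearization shows the two identity maps are not merely $1$-QC but asymptotically homotheties (both $L(a,r)/r$ and $l(a,r)/r$ converge to $\rho(a)$, resp.\ $\tau(a)$, so $L$ and $l$ are uniformly comparable to the \emph{same} multiple of $r$). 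Once that is said, your factorization recovers the exact constant $\eta(1)$, matching the paper. So: same key linearization lemma, cleaner high-level organization on your side, but you should replace the blanket multiplicativity claim with the asymptotic-homothety justification, or simply collapse the factorization into the paper's one explicit ratio chain.
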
 
\begin{proof}%
The metric spaces $(\Om,\ed),(\Om,l),(\Om,d_\rho),(\Om,d_\tau)$ are all \homic, and even locally \BL.  Indeed, given $a\in\Om$ and $0<\veps<\rho(a)/2$, there is an $r_0\in(0,\del(a))$ such that
\begin{equation}\label{E:dr locBL}
  z\in\D[a;r_0] \implies
  \bigl(\rho(a)-\veps\bigr)|z-a| \le d_\rho(z,a) \le \bigl(\rho(a)+\veps\bigr)|z-a|\,.
\end{equation}
Using this we deduce that for all small  $\veps>0$ and all $r\in(0,r_0)$ (where $r_0\in(0,\del(a))$ is chosen as above ``for both $d_\rho$ and $d_\tau$''),
\begin{gather*}
  L(r):=\sup_{d_\rho(z,a)\le r} d_\tau(z,a) \le \frac{\tau(a)+\veps}{\rho(a)-\veps} \cdot r
  \intertext{and}
  l(r):=\inf_{d_\rho(z,a)\ge r} d_\tau(z,a) \ge \frac{\tau(a)-\veps}{\rho(a)+\veps} \cdot r
  \intertext{whence}
  \limsup_{r\to0^+} \frac{L(r)}{l(r)} \le
  \frac{\tau(a)+\veps}{\tau(a)-\veps}\cdot\frac{\rho(a)+\veps}{\rho(a)-\veps}
  \to1\;\text{as $\veps\to0^+$}\,.
\end{gather*}

Suppose $\Om_\rho\xra{f}\Om_\tau$ is $\eta$-QS, $a\in\Om$, and $\veps>0$.  Since $f$ is a \homeo, for all $r\in(0,\del(a))$
\begin{gather*}
  L_f(r):=\sup_{|z-a|\le r} |f(z)-f(a)|=\max_{|z-a|=r} |f(z)-f(a)|
  \intertext{and}
  l_f(r):=\inf_{|z-a|\ge r} |f(z)-f(a)|=\min_{|z-a|=r} |f(z)-f(a)|\,.
\end{gather*}
Below we write $z':=f(z),a':=f(a)$, etc.

As above, pick $r_0\in(0,\del(a))$ and $s_0\in(0,\del(a'))$ so that \eqref{E:dr locBL} holds for $d_\rho$ and its analog holds for $d_\tau$ and $w\in\D[a';s_0]$.  Then take $r_1\in(0,r_0]$ so that $f\bigl(\D[a;r_1]\bigl)\subset\D(a';s_0)$.  Then for all $r\in(0,r_1)$ and all $z,w\in\mathsf{S}^1(a;r)$,
\begin{gather*}
  \frac{|z'-a'|}{|w'-a'|} \le \frac{\tau(a')+\veps}{\tau(a')-\veps} \cdot \frac{d_\tau(z',a')}{d_\tau(w',a')}
  \le \frac{\tau(a')+\veps}{\tau(a')-\veps} \cdot \eta\biggl(\frac{d_\tau(z',a')}{d_\tau(w',a')}\biggr) \le \frac{\tau(a')+\veps}{\tau(a')-\veps} \cdot \eta\biggl(\frac{\rho(a)+\veps}{\rho(a)-\veps}\biggr)\,;
  \intertext{here the inner inequality holds by \qsy\ and the two outer inequalities follow from repeated applications of \eqref{E:dr locBL}.  Selecting such $z,w$ that attain $L_f(r),l_f(r)$ respectively yields}
  \frac{L_f(r)}{l_f(r)} = \frac{|z'-a'|}{|w'-a'|} \le   \frac{\tau(a')+\veps}{\tau(a')-\veps} \cdot \eta\biggl(\frac{\rho(a)+\veps}{\rho(a)-\veps}\biggr)
\end{gather*}
and so letting $r\to0^+$, then $\veps\to0^+$, we deduce that, \wrt Euclidean distance, $f$ is indeed $\eta(1)$-QC.
\end{proof}%

The careful reader recognizes that in the above, we employ the metric (aka, linear) dilatation for \qc\ maps, not the geometric dilatation, whereas in \rf{F:GO-k-RQI} below $K$ is the geometric dilatation.

\subsubsection{The QuasiHyperbolic Metric}    \label{ss:quasihyp metric} %
The \emph{quasihyperbolic metric} $\del^{-1}ds$ is defined for any proper subdomain $\Om\subsetneq\mfC$; here $\del=\del_\Om$ is the Euclidean distance to the boundary of $\Om$.  This metric can be defined in very general metric spaces and has proven useful in many areas of geometric analysis.  See \cite{BHK-unif} and \cite{HRS-HdfCvgcQhypDist}.

For domains $\Om\subsetneq\RS$, we also consider the \emph{chordal quasihyperbolic metric} $\chi^{-1}d\hat{s}$ and the \emph{spherical quasihyperbolic metric} $\sig^{-1}d\hat{s}$ where $d\hat{s}$ denotes the chordal (or spherical) arclength ``differential''.  (Recall the paragraph immediately preceding \rf{ss:MPG}.)  The latter was employed in \cite[Chapter~7]{BHK-unif}.

The \emph{Euclidean, chordal, and spherical quasihyperbolic distances} $k=k_\Om, \kchi=k_{(\Om,\chi)}$ and $\ksig=k_{(\Om,\sig)}$ in $\Om$ are the length distances induced by the Euclidean quasihyperbolic, chordal quasihyperbolic, and spherical quasihyperbolic metrics $\del^{-1}\,ds, \chi^{-1}\,ds$ and $\sig^{-1}ds$ on $\Om$; here $\Om\subsetneq\mfC$ in the former setting whereas $\Om\subsetneq\RS$ in the latter two.  These are geodesic distances.

The Euclidean length space $(\Om,l)$ and spherical length space $(\Om,l_\sig)$ also carry quasihyperbolic metrics, but: quasihyperbolic distance in $(\Om,l)$ is Euclidean quasihyperbolic distance and quasihyperbolic distance in $(\Om,\lsig)$ is spherical quasihyperbolic distance.

We remind the reader of the following basic estimates for quasihyperbolic distance, first established by Gehring and Palka \cite[2.1]{GP-qch}:  For all $a,b\in\Om$,
\begin{subequations}\label{E:k ests}
\begin{gather}
  k(a,b) \ge \log\left(1+\frac{l(a,b)}{\del(a)\wedge \del(b)}\right)  
    \ge\log\left(1+\frac{|a-b|}{\del(a)\wedge \del(b)}\right) \ge\left|\log\frac{\del(a)}{\del(b)}\right|\,;  \label{E:k ge j}
  \intertext{where $l(a,b)$ is the (intrinsic) length distance between $a$ and $b$. The first inequality above is a special case of the more general (and easily proven) inequality}
  \ell_k (\gam) \ge \log\Bigl(1+\frac{\ell(\gam)}{\dist(|\gam|,\bOm)} \Bigr)  \label{E:k ge log}
\end{gather}
\end{subequations}
which holds for any rectifiable path $\gam$ in $\Om$.  See also \cite[(2.3),(2.4)]{BHK-unif}.  There are analogous inequalities for $k_\chi$ and $k_\sig$ where we replace all the Euclidean metric quantities by the appropriate chordal or spherical metric quantities.

\smallskip

It is well known that the \holo\ covering $\mfC\xra{\exp}\Cstar$ pulls back the quasihyperbolic metric $\del_\star^{-1}ds$ on $\Cstar$ to the Euclidean metric on $\mfC$, which in turn reveals that $(\Cstar,k_\star)$ is (isometric to) the Euclidean cylinder $\mfS^1\times\mfR^1$ with its Euclidean length distance inherited from the standard embedding into $\mfR^3$; here $k_\star:=k_{\Cstar}$.  See \cite{MO-qhyp}.  In particular, quasihyperbolic geodesics in $\Cstar$ are logarithmic spirals and for all $a,b\in\Cstar$,
\begin{gather}
  k_\star(a,b)=\bigl|\Log(b/a)\bigr|=\bigl|\log|b/a|+i\Arg(b/a)\bigr| \notag
  \intertext{and thus}
  \bigl|\log\frac{|b|}{|a|}\bigr|\vee\bigl|\Arg\bigl(\frac{b}{a}\bigr)\bigr| \le  k_\star(a,b) \le
  \bigl|\log\frac{|b|}{|a|}\bigr| + \bigl|\Arg\bigl(\frac{b}{a}\bigr)\bigr| \le
  \bigl|\log\frac{|b|}{|a|}\bigr| + \frac{\pi}{2}\,\frac{|a-b|}{|a|\wedge|b|}.  \label{E:k* ests}
\end{gather}
Note the special cases of the above that arise when $|a|=|b|$ or $\Arg(b/a)=0$.

Often, \eqref{E:k* ests} provides good estimates for quasihyperbolic distances as described next.

\begin{fact}  \label{F:k ests in A}%
Suppose $A=\{r<|z-c|<R\}\subset\Om$ with $c\in\mfC\sm\Om$ and $R/r>4$.  Let $\del_*^{-1}ds$ and $k_*$ denote the quasihyperbolic metric and distance in the punctured plane $\mfC\sm\{c\}$.  Then in $\{2r<|z-c|<R/2\}$, $\half\del_*\le\del\le\del_*$ and $k_*\le k\le 2k_*$.
\end{fact}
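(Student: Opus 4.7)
The plan is to prove the pointwise distance comparison first and then bootstrap it to the quasihyperbolic distance comparison, with the main obstacle being the upper bound $k\le 2k_*$, which requires producing an actual path in $\Om$ whose $k$-length is controlled by $k_*$.

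For the density estimate, first note that $c\notin\Om$ forces $\Om\subseteq\mfC\sm\{c\}$, so $\{c\}\subseteq\mfC\sm\Om$ and hence $\del(z)=\dist(z,\bOm)\le |z-c|=\del_*(z)$ for every $z\in\Om$. For the reverse inequality $\del\ge \del_*/2$ on $\{2r<|z-c|<R/2\}$, I would use $A\subseteq\Om$, which gives $\bOm\subseteq\mfC\sm A$, so that $\del_\Om(z)\ge\dist(z,\mfC\sm A)=\min(|z-c|-r,\,R-|z-c|)$ for $z\in A$. Under the constraint $2r<|z-c|<R/2$, both $|z-c|-r$ and $R-|z-c|$ exceed $|z-c|/2$, so $\del(z)\ge |z-c|/2=\del_*(z)/2$.

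The lower bound $k_*\le k$ is now immediate: since $\Om\subseteq\mfC\sm\{c\}$, every rectifiable path $\gam$ in $\Om$ is also a path in $\mfC\sm\{c\}$, and $\del\le\del_*$ gives $\del^{-1}\ge\del_*^{-1}$, so $\ell_k(\gam)\ge\ell_{k_*}(\gam)\ge k_*(a,b)$; taking the infimum over such $\gam$ yields $k(a,b)\ge k_*(a,b)$.

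The harder direction $k\le 2k_*$ requires exhibiting an efficient path in $\Om$. The key observation is that quasihyperbolic geodesics in $\mfC\sm\{c\}$ admit an explicit formula: after translating so $c=0$, they are logarithmic spirals, as $\exp\colon\mfC\to\Cstar$ pulls $\del_*^{-1}ds$ back to the Euclidean metric (cf.\ \eqref{E:k* ests} and the surrounding discussion). Writing $a=r_ae^{i\alf}$ and $b=r_be^{i\beta}$ with $|\alf-\beta|\le\pi$, the $k_*$-geodesic is
\[
  \gam(t)=r_a^{1-t}r_b^{t}\,e^{i((1-t)\alf+t\beta)},\qquad t\in[0,1],
\]
so $|\gam(t)|$ interpolates monotonically between $r_a$ and $r_b$. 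Provided $a,b\in\{2r<|z-c|<R/2\}$, we have $r_a,r_b\in(2r,R/2)$, and hence $|\gam(t)|\in(2r,R/2)$ for all $t$. Thus $\gam$ lies entirely in the subregion where the density estimate $\del\ge\del_*/2$ applies, and since this subregion is contained in $A\subseteq\Om$, the path $\gam$ is admissible for $k$. Therefore
\[
  k(a,b)\le\ell_k(\gam)=\int_\gam\del^{-1}\,ds\le 2\int_\gam\del_*^{-1}\,ds=2\,k_*(a,b),
\]
completing the proof. The main obstacle here is simply recognizing that the $k_*$-geodesic stays in the correct subregion; the hypothesis $R/r>4$ ensures $\{2r<|z-c|<R/2\}$ is nonempty and the monotonicity of $|\gam(t)|$ along logarithmic spirals confines $\gam$ to it.
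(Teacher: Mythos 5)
Your proof is correct, and since the paper states this as an unproved Fact (presumably considering it routine), there is no paper proof to compare against; your argument is precisely the natural one.

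A couple of points worth confirming. The density comparison is fine: $c\in\mfC\sm\Om$ and the standard identity $\dist(z,\bOm)=\dist(z,\mfC\sm\Om)$ for $z\in\Om$ give $\del\le\del_*$ on all of $\Om$, and $A\subset\Om$ gives $\del(z)\ge\dist(z,\mfC\sm A)=\min(|z-c|-r,\,R-|z-c|)\ge|z-c|/2$ once $2r<|z-c|<R/2$, exactly as you wrote. The lower bound $k_*\le k$ then follows by integrating $\del^{-1}\ge\del_*^{-1}$ over any path in $\Om$ and is valid for arbitrary points of $\Om$, more than what the Fact claims. The one genuinely nontrivial step is the upper bound $k\le 2k_*$, and your observation that the $k_*$-geodesic (a logarithmic spiral, as the paper notes near \eqref{E:k* ests}) has $|\gam(t)-c|=r_a^{1-t}r_b^t$ interpolating monotonically between $r_a$ and $r_b$ is exactly what makes this work: the geodesic never leaves the region where $\del\ge\del_*/2$ holds, so it is admissible for $k$ and has controlled $k$-length. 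Your remark that $R/r>4$ guarantees the subregion is nonempty is the right hypothesis check. No gaps.
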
 %

Another consequence of \eqref{E:k* ests} is the following description for the two `ends' of $(\Cstar,k_\star)$.  We include this trivial observation as motivation for later results; see \rf{L:nmlzd punx disks} and \rf{P:QI equiv punx disks}

\begin{lma} \label{L:punx disks in Cstar} %
Let $r\in(0,+\infty)$.  Define $\Del_\star,\Delstar\subset\Cstar$by
\[
  \Del_\star:=\D[0;r]\sm\{0\}
  \quad\text{and} \quad
  \Delstar:=\mfC\sm\D(0;r)\,.
\]
Then both $(\Del_\star,k_\star)$ and $(\Delstar,k_\star)$ are $\pi$-roughly isometrically equivalent to the infinite ray $\bigl([0,+\infty),\ed\bigr)$.
\end{lma}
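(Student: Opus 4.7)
The plan is to project each domain onto the half-line by taking the logarithm of the radial distance to the bounding circle $|z|=r$, and then to verify that this projection is a $\pi$-rough isometry using the explicit distance estimate \eqref{E:k* ests}.

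For $\Del_\star$, define $f:\Del_\star\to[0,+\infty)$ by $f(z):=\log(r/|z|)$; as $|z|$ ranges through $(0,r]$, the map $f$ is clearly surjective onto $[0,+\infty)$. Given $a,b\in\Del_\star$, one has $|f(a)-f(b)|=\bigl|\log(|b|/|a|)\bigr|$, and combining the lower and middle bounds of \eqref{E:k* ests} with the fact that the principal argument satisfies $|\Arg(b/a)|\le\pi$ yields
\[
|f(a)-f(b)|\;\le\;k_\star(a,b)\;\le\;|f(a)-f(b)|+|\Arg(b/a)|\;\le\;|f(a)-f(b)|+\pi.
\]
Thus $f$ is a $(1,\pi)$-quasiisometry. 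Since $f$ is surjective, \rf{X:QIEqX} promotes this to a $\pi$-rough isometric equivalence between $(\Del_\star,k_\star)$ and $\bigl([0,+\infty),\ed\bigr)$.

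The argument for $\Delstar$ is identical after defining $g:\Delstar\to[0,+\infty)$ by $g(z):=\log(|z|/r)$, which is again surjective and again satisfies $|g(a)-g(b)|=\bigl|\log(|b|/|a|)\bigr|$; the same chain of inequalities applies verbatim.

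There is essentially no obstacle here: the entire content is reading the two-sided bound for $k_\star$ off of \eqref{E:k* ests} and using that the principal branch of $\Arg$ supplies the additive constant $\pi$. The only sanity check is that $f$ and $g$ genuinely hit every $t\in[0,+\infty)$ (take $z=re^{-t}$ or $z=re^{t}$ respectively), so one really does obtain a quasiisometric \emph{equivalence} and not merely a quasiisometric \emph{embedding}.
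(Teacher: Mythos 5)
Your proof is correct and follows essentially the same route as the paper: projection via the logarithm of the modulus, with the two-sided bound for $k_\star$ from \eqref{E:k* ests} supplying the additive constant $\pi$. The only cosmetic difference is that the paper handles $\Delstar$ by noting $z\mapsto z^{-1}$ is a $k_\star$-isometry sending $\Delstar$ onto a punctured disk, whereas you run the same computation directly with $g$; also, surjectivity of your map already gives QI equivalence by definition, so the appeal to \rf{X:QIEqX} is unnecessary (and slightly off-target, since that example concerns inclusions).
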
 %
\begin{proof}%
It is easy to see that the map $(\Del_\star,k_\star)\xra{\vphi}\bigl([0,+\infty),\ed\bigr)$, $\vphi(z):=\log(|z|/r)$, is a surjective $(1,\pi)$-QI equivalence; also, here $\vth(s):=r e^{-s}$ is an isometric embedding from $\bigl([0,+\infty),\ed\bigr)$ into $(\Del_\star,k_\star)$ with
\[
  \vth\bigl([0,+\infty)\bigr)=(0,r]\subset\Del_\star\subset N_{k_\star}\bigl[(0,r];\pi\bigr]\,.
\]
For $(\Delstar,k_\star)$, we use the fact that $z\mapsto z^{-1}$ is an isometric automorphism of $(\Cstar,k_\star)$.
\end{proof}%

As the chordal and spherical distances are \BL\ equivalent, and $\sig$ is the length distance associated with $\chi$ (on $\RS$), it follows that the chordal and spherical quasihyperbolic metrics (and their associated distances) are \BL\ equivalent with $\ksig\le \kchi \le \frac{\pi}2\, \ksig$.

It is useful to know that Euclidean and spherical quasihyperbolic distances are \BL\ equivalent.  Note, however, that the distortion constant depends on the location of the origin.  Essentially, this is because a general \MT\ is not a chordal nor spherical isometry, just \BL.  One can establish this by using appropriate estimates between $\del$ and $\chi$ as in \cite[Lemma~3.10]{BHX-inversion} or alternatively appeal to \cite[Theorem~4.12]{BHX-inversion}; the interested reader should also peruse \cite{BB-gromov}.

\begin{fact}  \label{F:k BL kh} %
Let $\Om\subsetneq\mfC$ be a domain.  Then $(\Om,k), (\Om,\kchi)$, and $(\Om,\ksig)$ are all \BL\ equivalent.  In particular, $\qtr k\le \kchi \le 8 (1+D)^4 k$ where $D:=\dist(0,\mfC\sm\Om)$.\footnote{One should view the constant $D$ as depending on $\diam_\chi\Om$.  By examining the distances between 0 and 1 in $\D(0;R)$ (or in $\mfC\sm\{R\}$) (as $R\to+\infty$) we see that this \BL\ constant really does depend on $D$.}
\end{fact}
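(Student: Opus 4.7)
The \BL\ equivalence between $(\Om,\kchi)$ and $(\Om,\ksig)$ is immediate from the universal comparison $\chi\le\sig\le(\pi/2)\chi$, which gives $\ksig\le\kchi\le(\pi/2)\ksig$. The substantive work lies in the equivalence between $(\Om,k)$ and $(\Om,\kchi)$, which I would establish by a pointwise comparison of the conformal metric densities $\rho_0(z):=1/\del(z)$ (for $k$) and $\rho_1(z):=2/[(1+|z|^2)\chi(z)]$ (for $\kchi$). Since both are length distances, any pointwise inequality $c_1\rho_0\le\rho_1\le c_2\rho_0$ integrates along paths to yield $c_1 k\le\kchi\le c_2 k$, so the two stated distance inequalities reduce to the pointwise estimates
\[
(1+|z|^2)\chi(z)\le 8\del(z)\quad\text{(A)}
\qquad\text{and}\qquad
(1+|z|^2)\chi(z)\ge\frac{\del(z)}{4(1+D)^4}\quad\text{(B)}.
\]

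For (A), I would let $w^*\in\bOm$ be a Euclidean-nearest boundary point to $z$, so that $|z-w^*|=\del(z)$ and
\[
\chi(z)\le\chi(z,w^*)=\frac{2\del(z)}{\sqrt{(1+|z|^2)(1+|w^*|^2)}},
\]
whence $(1+|z|^2)\chi(z)\le 2\del(z)\sqrt{(1+|z|^2)/(1+|w^*|^2)}$. A case split on whether $|w^*|\ge|z|/2$ handles the bulk of the argument. In the remaining regime $|w^*|<|z|/2$ the reverse triangle inequality forces $\del(z)\ge|z|/2$, and I would close the estimate using the supplementary bound $\chi(z)\le\chi(z,\infty)=2/\sqrt{1+|z|^2}$, valid whenever $\Om$ is unbounded so that $\infty\in\hat\bd\Om$; for bounded $\Om$ a point on the outer boundary with $|w|\ge|z|$ plays the same role.

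For (B), the key tool is a specific boundary point $w_D\in\bOm$ achieving $|w_D|=D$, combined with the Cauchy--Schwarz estimate
\[
|z|+D\le\sqrt{(1+|z|^2)(1+D^2)}\le(1+D)\sqrt{1+|z|^2},
\qquad\text{hence}\qquad
\del(z)\le|z|+D\le(1+D)\sqrt{1+|z|^2}.
\]
If the chordal-nearest boundary point is $\infty$ then $\chi(z)=2/\sqrt{1+|z|^2}$, and this Cauchy--Schwarz bound immediately gives $(1+|z|^2)\chi(z)=2\sqrt{1+|z|^2}\ge 2\del(z)/(1+D)$. Otherwise the chordal-nearest point $w_0$ is finite, and the minimality condition $\chi(z,w_0)\le\chi(z,w_D)$ forces $|z-w_0|/\sqrt{1+|w_0|^2}\le(|z|+D)/\sqrt{1+D^2}$, which controls $|w_0|$ in terms of $|z|$ and $D$. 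Substituting this control into $\chi(z,w_0)=2|z-w_0|/\sqrt{(1+|z|^2)(1+|w_0|^2)}$ produces the required lower bound, with the power of $(1+D)$ emerging from the case analysis.

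The principal obstacle is (B). The naive attempt to lower-bound $\chi(z)=\inf_w\chi(z,w)$ via $\chi(z,w)\ge 2\del(z)/\sqrt{(1+|z|^2)(1+|w|^2)}$ fails because $|w|$ is \emph{a priori} unbounded along a general boundary; one genuinely needs external control on the modulus of the chordal-nearest boundary point. The auxiliary comparison with $w_D$ provides exactly this control, and the $(1+D)$-dependence is intrinsic, as witnessed by the footnoted examples $\D(0;R)$ and $\mfC\sm\{R\}$ with $R\to\infty$.
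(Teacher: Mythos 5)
Your reduction to pointwise comparison of the conformal densities $1/\del$ and $2/[(1+|z|^2)\chi(z)]$ is the right move, and it matches the route the paper gestures at: the paper gives no self-contained argument for this Fact but defers to ``appropriate estimates between $\del$ and $\chi$'' in \cite{BHX-inversion}, which is exactly what your inequalities (A) and (B) amount to. Your sketch of (A) is essentially sound, modulo one more split on $|z|$ in the regime $|w^*|<|z|/2$: for $|z|<1/\sqrt{3}$ the bound $\chi(z,\infty)=2/\sqrt{1+|z|^2}$ is not enough to beat $8\del(z)$, and one must revert to $\chi(z,w^*)$ directly, which then closes because $\sqrt{1+|z|^2}<2$ there.

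The genuine gap is in (B). The minimality $\chi(z,w_0)\le\chi(z,w_D)$ does yield $|z-w_0|/\sqrt{1+|w_0|^2}\le(|z|+D)/\sqrt{1+D^2}$, but this does \emph{not} control $|w_0|$: the left-hand side tends to $1$ as $|w_0|\to\infty$ with $z$ fixed, so whenever the right-hand side exceeds $1$ --- i.e.\ whenever $|z|>\sqrt{1+D^2}-D$ --- the inequality is satisfied for all sufficiently large $|w_0|$ and so gives no upper bound on it. More basically, an upper bound for $\chi(z,w_0)$ can never by itself produce the lower bound you need for $\chi(z)$. What is actually required is the inequality $\chi(z,w)\ge\del(z)/[C(1+|z|^2)]$ for \emph{every} $w\in\hat\bd\Om$, not just for the minimizer $w_0$; and this succeeds without any control on $|w|$, because of a cancellation your sketch omits. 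Namely, if $|w|\le 2|z|$ then $\sqrt{1+|w|^2}\le 2\sqrt{1+|z|^2}$ together with $|z-w|\ge\del(z)$ already suffices; if $|w|>2|z|$ then $|z-w|\ge|w|/2$, which makes the ratio $|z-w|/\sqrt{1+|w|^2}$ bounded below by an absolute constant, and one finishes with $\del(z)\le(1+D)\sqrt{1+|z|^2}$; the case $w=\infty$ is exactly your Cauchy--Schwarz estimate. Carried out this way, (B) holds with a single power of $1+D$, comfortably within the stated $8(1+D)^4$.
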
 %

\medskip

An important property of hyperbolic distance is its conformal invariance.  While this does not hold for quasihyperbolic distance, it is \Mob\ quasi-invariant in the following sense; see \cite[Lemma~2.4, Corollary~2.5]{GP-qch}.  Modifications to their argument also gives quasi-invariance of the relative distances $j, j'$.

\begin{fact}  \label{F:kQI} %
Let $\RS\xra{T}\RS$ be a \MT.  Let $\Om\subsetneq\mfC$ and suppose $\Om':=T(\Om)\subset\mfC$.  Put $\del':=\del_{\Om'}, k':=k_{\Om'}$, and $j':=j_{\Om'}$.  Then
\begin{gather*}
  \forall\; z\in\Om\,, \quad  \frac1{\del(z)} \le2\,\frac{|T'(z)|}{\del'(T(z))}.
  \intertext{Consequently, for all rectifiable paths $\gam$ in $\Om$,}
  \tfrac12 \ell_k(\gam) \le \ell_{k'}(T\comp\gam) \le 2\ell_k(\gam).
  \intertext{In particular, with $a':=T(a)$ and $b':=T(b)$,}
  \forall\; a,b\in\Om\,, \quad \tfrac12 k(a,b) \le k'(a',b') \le 2k(a,b)
  \intertext{and also}
  \forall\; a,b\in\Om\,, \quad \tfrac12 j(a,b) \le j'(a',b') \le 2j(a,b).
\end{gather*}
\end{fact}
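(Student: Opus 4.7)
The four assertions reduce to the single pointwise distortion inequality
\[
\del'(T(z))\le 2|T'(z)|\,\del(z) \qquad (z\in\Om).
\]
Indeed, writing the pullback of arclength under $T$ as $|T'(z)|\,ds(z)$, this pointwise bound gives $\ell_{k'}(T\circ\gam)\ge\tfrac12\ell_k(\gam)$ for any rectifiable $\gam$ in $\Om$, and applying the same estimate to $T^{-1}$ at $T(z)$ (using $(T^{-1})'(T(z))=1/T'(z)$) gives the reverse bound $\ell_{k'}(T\circ\gam)\le 2\ell_k(\gam)$. Taking infima over rectifiable paths joining $a$ to $b$ then yields the distance bounds. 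The bounds on $j,j'$ come from the same pointwise distortion combined with the Mobius distance identity recalled below.

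To prove the pointwise bound, normalize $T(w)=(aw+b)/(cw+d)$ with $ad-bc=1$, so $|T'(w)|=|cw+d|^{-2}$. If $c=0$, $T$ is affine and the bound holds with constant $1$. Otherwise the pole $p:=-d/c\in\mfC\sm\Om$ (since $T(\Om)\subset\mfC$), hence $|z-p|\ge\del(z)$. The essential tools are the identities
\[
|T(z)-T(w)|=\frac{|z-w|}{|cz+d|\,|cw+d|}\quad\text{and}\quad |T(z)-T(\infty)|=\frac{|z-p|}{|cz+d|^2},
\]
the second coming from $T(\infty)=a/c$ and $ad-bc=1$.

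Let $q\in\bOm$ be a nearest boundary point to $z$, so $|z-q|=\del(z)$. If $|q-p|\ge\del(z)$, the triangle inequality gives $|z-p|\le\del(z)+|q-p|\le 2|q-p|$, and the first identity (with $T(q)\in\bd\Om'$) yields
\[
\del'(T(z))\le |T(z)-T(q)|=\frac{\del(z)}{|cz+d|\,|cq+d|}\le\frac{2\del(z)}{|cz+d|^2}=2|T'(z)|\,\del(z).
\]
Otherwise $|z-p|\le|z-q|+|q-p|<2\del(z)$. If $\Om$ is unbounded, then $\infty\in\hat{\bd}\Om$, so $T(\infty)=a/c\in\bd\Om'$ and the second identity gives $\del'(T(z))\le|T(z)-a/c|=|z-p|/|cz+d|^2\le 2|T'(z)|\,\del(z)$. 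If $\Om$ is bounded, the ray emanating from $p$ through $z$ past $z$ must exit $\Om$ at a first finite boundary point $q'$, for which $|q'-z|\ge\del(z)$ and $|q'-p|=|z-p|+|q'-z|$; substituting into the first identity reduces the desired bound to $|q'-z|(|z-p|-2\del(z))\le 2\del(z)|z-p|$, which is immediate from $|z-p|\le 2\del(z)$.

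The main obstacle is precisely this last sub-case, where the pole $p$ sits within distance $\del(z)$ of every nearest boundary point to $z$ and the naive bound via $q$ fails to deliver the factor $2$. The dichotomy between unbounded and bounded $\Om$---choosing $T(\infty)$ versus a ray-exit boundary point in the role of the ``boundary point of $\Om'$ closest to $T(z)$''---is the essential geometric idea that secures the universal constant $2$.
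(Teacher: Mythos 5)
Your proof of the pointwise bound $\del'(T(z)) \le 2|T'(z)|\del(z)$ is correct, and it is worth noting that the paper does not actually prove this Fact at all—it cites Gehring and Palka \cite{GP-qch}. Your argument is thus a genuine self-contained proof, and it is in the same spirit as the classical one: normalize to $ad-bc=1$, dispose of the affine case, use the identity $|T(z)-T(w)| = |z-w|/(|cz+d||cw+d|)$, and then split on whether the nearest boundary point $q$ is far from the pole $p$ (so $|z-p|\le 2|q-p|$) or close (so $|z-p|<2\del(z)$, handled via $T(\infty)$ when $\Om$ is unbounded or the ray-exit point $q'$ when $\Om$ is bounded). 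I have checked the algebra in each branch, including the reduction of the bounded sub-case to $|q'-z|\,(|z-p|-2\del(z))\le 2\del(z)|z-p|$, and it all works. Passing from the pointwise bound to the $\ell_k$-estimates by applying the bound to both $T$ and $T^{-1}$ and integrating is also correct, and the $k$-estimate then follows by taking infima.

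The one genuine soft spot is the last displayed inequality, for $j$ and $j'$. You write that it ``comes from the same pointwise distortion combined with the Möbius distance identity,'' but for $j$ the deduction is not mechanical as it is for $k$. The point is that $j(a,b)$ couples $|a-b|$, $\del(a)$, and $\del(b)$ in a single algebraic expression rather than as an integral of a local density, and after substituting the Möbius identity the factors $|ca+d| = |c||a-p|$ and $|cb+d|=|c||b-p|$ do not cancel; one is left with the ratio $t := |a-p|/|b-p|$. To finish one must additionally bound $t$ using $|\,|a-p|-|b-p|\,|\le |a-b|$ together with $|a-p|\ge\del(a)$ and $|b-p|\ge\del(b)$, which pins $t$ to the interval $[(1+|a-b|/\del(a))^{-1},\,1+|a-b|/\del(b)]$, and then verify an elementary polynomial inequality at the endpoints. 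This is presumably the content of the paper's remark that ``modifications to [Gehring--Palka's] argument also gives quasi-invariance of $j,j'$,'' but as written your sentence asserts rather than proves it; if the $j$-bound is to be used, this extra step should be written out.
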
 %

In \cite[Theorem~3]{GO-unif}, Gehring and Osgood proved the following, which says that \qc\ \homeo s are rough quasihyperbolic quasiisometries and conformal maps are even quasihyperbolically \BL.


\begin{fact}  \label{F:GO-k-RQI} %
For each $K\ge1$ there is a constant $C=C(K)$ such that for any $\Om\xra{f}\Om'$, a $K$-QC \homeo\ between two proper plane domains,
\[
  \forall\; a,b\in\Om\,, \quad k'\bigl(f(a),f(b)\bigr) \le C \max\{k(a,b),k(a,b)^p\}
\]
where $p:=K^{-1}$.  In particular, a conformal map between $\Om$ and $\Om'$ induces a \BL\ equivalence between $(\Om,k)$ and $(\Om',k')$.
\end{fact}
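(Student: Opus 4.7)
The plan is to reduce the inequality to a local H\"older-type distortion estimate and then chain it along a quasihyperbolic geodesic. Throughout, write $\del=\del_\Om$, $\del'=\del_{\Om'}$, $k'=k_{\Om'}$, and set $p:=K^{-1}$.

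First I would establish the \emph{local distortion bound}: there are constants $\veps=\veps(K)>0$ and $C_1=C_1(K)$ such that
\[
  k(a,b)\le\veps \implies k'\bigl(f(a),f(b)\bigr) \le C_1\,k(a,b)^{1/K}.
\]
The standard route goes through $K$-quasi-invariance of the modulus of path families. For $b\in\D(a;\del(a)/2)$, the round ring $\A:=\D(a;\del(a))\sm\D[a;|a-b|]\subset\Om$ separates $\{a,b\}$ from $\bOm$ and has modulus $\log(\del(a)/|a-b|)$; its image $f(\A)$ is a topological ring in $\Om'$ separating $\{f(a),f(b)\}$ from $\bOm'$ with modulus at least $(1/K)\log(\del(a)/|a-b|)$. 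Comparing with the Teichm\"uller extremal ring, whose modulus function $\Psi$ satisfies $\Psi(t)\eqx\log(1/t)$ for small $t$, yields
\[
  |f(b)-f(a)|/\del'(f(a)) \le C_0(K)\bigl(|a-b|/\del(a)\bigr)^{1/K}.
\]
Combining this with the elementary upper bound $k(u,v)\le 2|u-v|/\del(u)$ (valid when $|u-v|\le\del(u)/2$) and with \eqref{E:k ge j} translates the Euclidean H\"older bound into the claimed quasihyperbolic form for a suitable $\veps(K)<\log(3/2)$.

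Next, for arbitrary $a,b\in\Om$, set $L:=k(a,b)$ and let $\gam$ be a quasihyperbolic geodesic from $a$ to $b$. I partition $\gam$ into $N:=\lceil L/\veps\rceil$ consecutive subarcs of $k$-length at most $\veps$ with endpoints $a=z_0,z_1,\dots,z_N=b$. Since $\gam$ is a $k$-geodesic, $k(z_{i-1},z_i)\le\veps$, so the local bound gives $k'(f(z_{i-1}),f(z_i))\le C_1\veps^{1/K}=:C_2(K)$. Telescoping,
\[
  k'\bigl(f(a),f(b)\bigr) \le \sum_{i=1}^{N} k'\bigl(f(z_{i-1}),f(z_i)\bigr) \le N C_2 \le C_2(L/\veps+1).
\]
For $L\ge 1$ this gives $k'(f(a),f(b))\le C(K)\,k(a,b)$; for $L<1$ the local bound (applied directly when $L\le\veps$, otherwise absorbed via $L^{1/K}\ge\veps^{1/K}$) gives $k'(f(a),f(b))\le C(K)\,k(a,b)^{1/K}$. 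The two regimes combine into the claimed bound $k'(f(a),f(b))\le C(K)\max\{k(a,b),k(a,b)^{1/K}\}$.

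For the \emph{In particular} statement, a conformal map is $1$-quasiconformal, so $p=1$ and the two entries in the max coincide; the estimate becomes $k'(f(a),f(b))\le C(1)\,k(a,b)$, and applying the same bound to the conformal inverse $f^{-1}$ yields the reverse inequality, giving a bi-Lipschitz equivalence $(\Om,k)\to(\Om',k')$. The main obstacle is the local H\"older distortion bound of Step 1: it is the one step requiring serious QC machinery (modulus of path families together with the Teichm\"uller extremal function); the chaining argument and conformal corollary are purely metric and automatic thereafter.
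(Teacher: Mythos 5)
Your proposal is correct, and it is essentially the classical Gehring--Osgood argument that the paper simply cites (\cite[Theorem~3]{GO-unif}) without reproducing: establish a local H\"older distortion bound via quasi-invariance of the modulus of ring domains together with a Teichm\"uller-ring comparison, then chain along a quasihyperbolic geodesic to pass from the local to the global estimate. The two-regime combination ($L\ge 1$ versus $L<1$) and the observation that $K=1$ collapses the maximum to give bi-Lipschitz equivalence for conformal maps both match the standard treatment.
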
 %

\subsubsection{The Hyperbolic Metric}    \label{ss:hyp metric} %
Every hyperbolic domain in $\RS$ carries a unique metric, $\lam\,ds=\lam_\Om\,ds$, which enjoys the property that its pullback $p^*[\lam\,ds]$, \wrt any holomorphic universal covering projection $p:\mfD\to\Om$, is the hyperbolic metric $\lam_\mfD(\zeta)|d\zeta|=2(1-|\zeta|^2)^{-1}|d\zeta|$ on $\mfD$.  Another description is that $\lam\,ds$ is the unique maximal (or unique complete) metric on $\Om$ that has constant Gaussian curvature $-1$.  In terms of such a covering $p$, the metric-density $\lam=\lam_\Om$ of the {\em Poincar\'e hyperbolic metric\/} $\lam_\Om\,ds$ can be determined from
$$
  \lam(z)=\lam_\Om(z)=\lam_\Om(p(\zeta))=2(1-|\zeta|^2)^{-1}|p'(\zeta)|^{-1},
$$
the above being valid for points $z\in\Om\cap\mfC$ whereas one must use local coordinates in any neighborhood of the point at infinity if $\Om\not\subset\mfC$.  (Alternatively, one can use the \emph{chordal hyperbolic metric-density} $\hat{\lam}$ and then the hyperbolic metric is $\hat{\lam}\,\hat{ds}$ where $\hat{ds}$ denotes the chordal (or spherical) arclength ``differential''.)

For example, the hyperbolic metric $\lam_*ds$ on the punctured unit disk $\mfD_*:=\mfD\sm\{0\}$ can be obtained by using the universal covering $z=\exp(w)$ from the left-half-plane onto $\mfD_*$ and we find that
\[
  \lam_*(z)|dz|=\frac{|dz|}{|z|\bigl|\log|z|\bigr|}.
\]

The \emph{hyperbolic distance} $h=h_\Om$ is the length distance $h_\Om:=d_{\lam}$ induced by the hyperbolic metric $\lam\,ds$ on $\Om$.  This is a geodesic distance: for any points $a,b$ in $\Om$, there is an $h$-geodesic $[a,b]_h$ joining $a,b$ in $\Om$.  These geodesics need not be unique, but they enjoy the property that
\[
  h(a,b)=\ell_h([a,b]_h).
\]
Here we are writing $\ell_h$ in lieu of $\ell_\lam$.

Except for a short list of special cases, the actual calculation of any given hyperbolic metric is notoriously difficult; computing hyperbolic distances and determining hyperbolic geodesics is even harder.  Indeed, one can find a number of papers analyzing the behavior of the hyperbolic metric in a twice punctured plane.  Typically one is left with estimates obtained by using domain monotonicity and considering `nice' sub-domains and super-domains in which one can calculate, or at least estimate, the metric.

The standard technique for estimating the hyperbolic metric and hyperbolic distance is via domain monotonicity, a consequence of Schwarz's Lemma.  That is, if $\Om_{\rm in}\subset\Om\subset\Om_{\rm out}$, then in $\Om_{\rm in}$, $\lam_{\rm in}ds \ge\lam\,ds\ge\lam_{\rm out}ds$ and $h_{\rm in}\ge h \ge h_{\rm out}$.

Notice that the largest hyperbolic plane regions are twice punctured planes.  We write $\lam_{ab}\,ds$ and $h_{ab}$ for the hyperbolic metric and hyperbolic distance in the twice punctured plane $\Cab$.  The standard twice punctured plane is $\Coo$ and its hyperbolic metric has been extensively studied by numerous researchers including \cite{Hempel-twice}, \cite{Minda-LNM}, \cite{SV-estimates}, \cite{SV-ineqs}.  We mention only the following.

\begin{fact}  \label{F:lam_01} %
For all $z\in\Coo$, $\lam_{01}(z)\ge\lam_{01}(-|z|)\ge\bigl( |z|[\kk+\bigl|\log|z|\bigr|] \bigr)^{-1}$, with equality at $z$ \ifff $z=-1$.
Here $\kk:= \bigl(\lam_{01}(-1)\bigr)^{-1} = \Gam^4(1/4)/\bigl(4\pi^2\bigr)=4.3768796\dots$.
\end{fact}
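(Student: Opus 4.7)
The statement combines two inequalities---the radial symmetrization $\lam_{01}(z)\ge\lam_{01}(-|z|)$, and a sharp one-dimensional bound $\lam_{01}(-r)\ge \bigl(r(\kk+|\log r|)\bigr)^{-1}$ on the negative real axis---and the equality clause refers to simultaneous equality, which can only occur at $z=-1$: the first inequality is an equality iff $z$ lies on the negative real axis, and the second iff $|z|=1$.

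For the symmetrization $\lam_{01}(z)\ge\lam_{01}(-|z|)$ the plan is to invoke the universal holomorphic covering $p\colon\mfH\to\Coo$ by the elliptic modular lambda function, under which $\lam_{01}$ pulls back to the density $1/\Imag(w)$. Minimizing $\lam_{01}$ on a circle $\{|z|=r\}$ amounts to maximizing $|p'|$ over preimages in a fundamental domain. Combining the anti-holomorphic symmetry $\lam_{01}(z)=\lam_{01}(\bar z)$ (which pairs critical points on each circle by reflection in the real axis) with an explicit analysis on the modular fundamental domain identifies $-r$ as the unique minimum; this is the heart of Hempel's classical argument.

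For the one-dimensional bound I would set $g(s):=1/\bigl(e^s\lam_{01}(-e^s)\bigr)$ with $s=\log r$, reducing the goal to $g(s)\le\kk+|s|$ with equality iff $s=0$. First, the $\hat{\mfC}\sm\{0,1,\infty\}$-automorphism $z\mapsto 1/z$ preserves the hyperbolic metric, yielding $\lam_{01}(1/z)=|z|^2\lam_{01}(z)$; specializing to $z=-r$ gives $g(-s)=g(s)$, so $g$ is even and $g'(0)=0$. Second, the classical Euler evaluation $K(1/\sqrt 2)=\Gam^2(1/4)/(4\sqrt\pi)$ of the complete elliptic integral, together with the period formula for $\lambda$, yields $\lam_{01}(-1)=4\pi^2/\Gam^4(1/4)$, hence $g(0)=\kk$. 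Third, one proves the Lipschitz bound $|g'(s)|\le 1$ with strict inequality for $s\ne 0$: by evenness it suffices to treat $s>0$; the puncture asymptotic $\lam_{01}(-r)\sim 1/(r\log r)$ as $r\to\infty$ (from the modular expansion near the puncture at $\infty$) gives $g(s)\sim s$ and $g'(s)\to 1^-$, while strict monotonicity of $g'$ on $(0,\infty)$ follows from the explicit elliptic-integral representation of $u\mapsto\lam_{01}(-u)$. Integration from $0$ then yields $g(s)\le\kk+|s|$ with the sharp equality case.

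The main obstacle is the radial symmetrization. The one-dimensional estimate is disciplined by the $r\leftrightarrow 1/r$ involution and by explicit elliptic formulas, so once one is on the negative real axis the argument is essentially a clean calculus exercise. By contrast, minimizing $\lam_{01}$ on a full circle $\{|z|=r\}$ requires detailed control of the modular covering on a fundamental domain and does not admit a short proof---it is the substance of Hempel's original paper and the reason this is quoted as a fact here rather than derived from scratch.
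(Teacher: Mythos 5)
The paper does not prove this statement at all: immediately after the Fact it simply attributes it to Lehto--Virtanen--\Va\ \cite{LVV-distortion}, with later proofs by Agard, Jenkins, and Minda. So the relevant comparison is not ``does the student match the paper's argument'' but ``is the student's sketch a viable route to a result the paper merely cites.''

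Your decomposition is the right one, and it matches the structure of the classical proofs: a radial symmetrization $\lam_{01}(z)\ge\lam_{01}(-|z|)$, plus a sharp one--variable bound on the negative axis, with the equality case falling out of the intersection of the two equality loci. The reduction of the one--variable bound to $g(s)\le\kk+|s|$ for $g(s):=1/\bigl(e^{s}\lam_{01}(-e^{s})\bigr)$ is clean and correct, and the use of the involution $z\mapsto 1/z$ (a conformal automorphism of $\Coo$) to get $g(-s)=g(s)$, together with the evaluation $g(0)=1/\lam_{01}(-1)=\kk$, is exactly right.

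The gap is in the heart of the one--variable argument. You assert that $g'(s)\to 1^{-}$ and that $g'$ is strictly increasing on $(0,\infty)$, ``from the explicit elliptic--integral representation,'' and from these you integrate to get $g(s)<\kk+s$. Neither claim is established, and both are substantive. The limit $g'(s)\to 1$ requires more than $g(s)\sim s$; it needs the refined expansion $g(s)-s\to c$ for a finite constant (in fact $c=\log 16<\kk$), and deriving $g'\to 1$ from $g(s)-s\to c$ still needs a monotonicity or convexity input. The convexity (or even just $g'\le 1$, which is all that is actually needed, since the goal is that $g(s)-s$ be non--increasing) is precisely the content of the sharp inequality; it does not fall out of having ``an elliptic--integral representation'' without real work. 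In the classical proofs this is where the Grötzsch/Teichmüller extremal--domain comparisons, or Jenkins's extremal--length computation, or Minda's monotonicity lemma, enter. As written, the step that is the whole point of the inequality is asserted rather than proved.

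Two smaller points. First, the equality characterization you state---first inequality an equality iff $z\in(-\infty,0)$, second iff $|z|=1$---is correct, but the strictness of the first for $z\notin(-\infty,0)$ is itself part of the symmetrization theorem and would need to be included in a full proof. Second, you attribute the radial symmetrization to Hempel, whereas the paper attributes the Fact as a whole to Lehto--Virtanen--\Va\ (with later proofs by Agard, Jenkins, Minda); Hempel's paper gives a related but different lower bound. This is an attribution slip rather than a mathematical one.

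In short: you have correctly identified the architecture and the correct reduction, and you are candid that the symmetrization is not something you can reprove on the spot. But the one--variable half, which you present as ``essentially a clean calculus exercise,'' is not: the monotonicity/convexity of $g$ is the theorem, and it is left unproved. Since the paper itself treats this as a black box from the literature, your proposal is a reasonable reading guide to the cited proofs, but it is not a proof.
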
%
\noindent
\rf{F:lam_01} was first proved by Lehto, Virtanen and \Va\ (see \cite{LVV-distortion}); later proofs were given by Agard \cite{Agard-distortion}, Jenkins \cite{Jenkins-LandauII}, and Minda \cite{Minda-LNM}. 

\smallskip

For future reference we record the following well-known estimates for hyperbolic distance in $\mfD_\star$, in $\mfD^\star$, and in $\Coo$.  Since we know the hyperbolic metrics in $\mfD_\star$ and $\mfD^\star$, it is easy to check the first two estimates.  The estimates for $h_{01}$ are straightforward consequences of \rf{F:lam_01} above.

\begin{facts}  \label{F:h in Ds&C01} %
\rule{1cm}{0mm}
\begin{enumerate}[\rm(a), wide, labelwidth=!, labelindent=0pt]
  \item  For any $a,b\in\mfD_\star$, $\ds h_\star(a,b)\le\Bigl|\log\frac{\log(1/|a|)}{\log(1/|b|)}\Bigr|+\frac{\pi}{\log2}$.
  \item  For any $a,b\in\mfD^\star$, $\ds h^\star(a,b)\le\Bigl|\log\frac{\log|a|}{\log|b|}\Bigr|+\frac{\pi}{\log2}$.
  \medskip
  \item  For all $a,b\in\Coo$:
         \begin{align*}
           &1\le|a|\le|b| \implies
             h_{01}(a,b)\ge h_{01}(-|a|,-|b|) \ge\log\frac{\kk+\log|b|}{\kk+\log|a|}
           \intertext{and}
           &|a|\le|b|\le1 \implies
             h_{01}(a,b) \ge h_{01}(-|a|,-|b|) \ge \log\frac{\kk+\log(1/|a|)}{\kk+\log(1/|b|)}\,.
         \end{align*}
\end{enumerate}
\end{facts}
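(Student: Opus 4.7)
The plan is to prove each of the three estimates by exploiting the explicit form of the relevant hyperbolic density, together with a piecewise-path construction for (a) and (b) and a symmetrization argument for (c).

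For (a), I would use the known density $\lam_\star(z) = 1/\bigl(|z|\log(1/|z|)\bigr)$ on $\mfD_\star$. Writing $\rho_z := \log(1/|z|)$, an elementary substitution $u = \log(1/|z|)$ shows that a radial segment from $re^{i\theta}$ to $r'e^{i\theta}$ has hyperbolic length $\bigl|\log(\rho_{re^{i\theta}}/\rho_{r'e^{i\theta}})\bigr|$, while a circular arc at $|z|=r$ subtending angle at most $\pi$ has hyperbolic length at most $\pi/\log(1/r)$. Bounding $h_\star(a,b)$ by the hyperbolic length of the three-piece path $a \to (\tfrac12,\arg a) \to (\tfrac12,\arg b) \to b$ (two radial pieces joined by a circular arc on $|z|=\tfrac12$), the circular contribution is at most $\pi/\log 2$ while the two radial contributions telescope to $\bigl|\log(\rho_a/\rho_b)\bigr|$, yielding (a).

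For (b), I would observe that the inversion $z \mapsto 1/z$ is a conformal bijection $\mfD^\star \to \mfD_\star$, hence (by conformal invariance of the hyperbolic metric) a hyperbolic isometry. Since $\log(1/|1/z|) = \log|z|$, applying (a) to the images $1/a$ and $1/b$ transforms the bound directly into (b).

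For (c), the first inequality $h_{01}(a,b)\ge h_{01}(-|a|,-|b|)$ comes from a symmetrization. Given any rectifiable path $\gam : a \cra b$ in $\Coo$, set $\beta(t) := -|\gam(t)|$; this is a path from $-|a|$ to $-|b|$ supported on the negative real axis, which lies in $\Coo$. Pointwise, $|\beta'(t)| = \bigl|\tfrac{d}{dt}|\gam(t)|\bigr| \le |\gam'(t)|$ almost everywhere, and $\lam_{01}(\beta(t)) \le \lam_{01}(\gam(t))$ by \rf{F:lam_01}, so $\ell_{h_{01}}(\beta) \le \ell_{h_{01}}(\gam)$; infimizing over $\gam$ gives the inequality. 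For the second inequality, the same symmetrization (applied to paths between $-|a|$ and $-|b|$) shows that a monotone segment on the negative real axis minimizes $\ell_{h_{01}}$, whence $h_{01}(-|a|,-|b|) = \int_{|a|\wedge|b|}^{|a|\vee|b|}\lam_{01}(-t)\,dt$. Substituting the pointwise estimate $\lam_{01}(-t) \ge 1/\bigl(t(\kk+|\log t|)\bigr)$ from \rf{F:lam_01} and evaluating the resulting integral with $u := \kk + \log t$ (when $1 \le |a| \le |b|$) or $u := \kk - \log t$ (when $|a|\le|b|\le 1$) reduces the integrand to $1/u$, producing the stated logarithmic quantities.

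The main technical point is the case analysis in (a): routing the three-piece path through $|z| = \tfrac12$ gives a clean telescoping only after checking sign conventions for the antiderivative $-\log\log(1/|z|)$ across the circle $|z| = \tfrac12$ (one radial piece may correspond to increasing $|z|$ and the other to decreasing $|z|$ depending on the positions of $a$ and $b$). The symmetrization used in (c), while conceptually routine, depends essentially on the density comparison $\lam_{01}(z) \ge \lam_{01}(-|z|)$ furnished by \rf{F:lam_01}, which is the nontrivial analytic ingredient.
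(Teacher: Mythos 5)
Your treatment of part (c) is sound: the symmetrization $\beta(t):=-|\gam(t)|$ together with the pointwise density comparison $\lam_{01}(\gam(t))\ge\lam_{01}(-|\gam(t)|)$ from \rf{F:lam_01} gives $h_{01}(a,b)\ge h_{01}(-|a|,-|b|)$, and integrating $\lam_{01}(-t)\ge\bigl(t(\kk+|\log t|)\bigr)^{-1}$ along the negative real segment via $u=\kk\pm\log t$ produces exactly the stated logarithms. Part (b) does follow from (a) via the hyperbolic isometry $z\mapsto1/z:\mfD^\star\to\mfD_\star$. (The paper itself supplies no proof, remarking only that (a) and (b) ``are easy to check'' from the explicit densities and that (c) follows from \rf{F:lam_01}, so your argument for (c) is presumably the intended one.)

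There is, however, a genuine gap in (a), and it is more than a bookkeeping issue with sign conventions. When $|a|$ and $|b|$ lie on the same side of $|z|=\tfrac12$ the two radial legs of your three-piece path do \emph{not} telescope: writing $\rho_z:=\log(1/|z|)$, if $\tfrac12<|a|\le|b|$ the path travels inward from $a$ to $|z|=\tfrac12$ and then back outward, so the radial lengths add up to $\log\bigl((\log 2)^2/(\rho_a\rho_b)\bigr)$, which strictly exceeds $|\log(\rho_a/\rho_b)|$ because $\rho_a<\log2$. In fact the inequality in (a) is false as literally stated when both points are near $\bd\mfD$: taking $a=0.9$ and $b=-0.9$, Schwarz--Pick gives
\[
  h_\star(a,b)\ \ge\ h_\mfD(a,b)\ =\ 2\log 19\ \approx\ 5.89,
\]
whereas the right-hand side of (a) is exactly $\pi/\log 2\approx 4.53$. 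The estimate \emph{is} valid in the range $0<|a|\le|b|\le\tfrac12$, which is the only range in which the paper invokes (a) and (b) (in the proof of \rf{P:QI equiv punx disks}); there your construction works once the circular arc is placed at radius $|b|=\max(|a|,|b|)$ rather than at $\tfrac12$, so that the single radial leg contributes exactly $\log(\rho_a/\rho_b)$ and the arc contributes at most $\pi/\log(1/|b|)\le\pi/\log2$. So either add the hypothesis $|a|,|b|\le\tfrac12$ (and correspondingly $|a|,|b|\ge2$ in (b)), or accept that the constant $\pi/\log2$ must be replaced by something that degenerates as $|a|\vee|b|\to1$.
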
 %


\subsubsection{The Beardon-Pommerenke Function $\bp$}  \label{ss:BP} %
We desire upper and lower estimates for the hyperbolic metric in terms of the quasihyperbolic metric.
These metrics are 2-bi-Lipschitz equivalent for simply connected hyperbolic plane regions; this is false for any domain with an isolated boundary point (such as the punctured unit disk).  The hyperbolic and quasihyperbolic metrics are bi-Lipschitz equivalent precisely when $\RS\sm\Om$ is uniformly perfect (see \cite{BP-beta}, \cite{Pomm-unifly-perfect1}, \cite{Pomm-unifly-perfect2}).  Beardon and Pommerenke corroborated this latter assertion as an application of their elegant result \cite[Theorem~1]{BP-beta} which says:
\begin{noname}
For any hyperbolic region $\Om$ in $\mfC$ and for all $z\in\Om$,
\[
  \frac{1}{\del(z)\bigl(\kk+\bp(z)\bigr)} \le \lam(z)  
  \le \frac{\pi/2}{\del(z) \, \bp(z)} .  \tag{\BPt}
\]
\end{noname}
\noindent
Here the domain function $\Om\xra{\bp}\mfR$, introduced by Beardon and Pommerenke, is defined via
\[
  \bp(z)=\bp_{\Om}(z):=
  \inf_{\substack{
    {\zeta\in\B(z)}\\
     \xi\in(\mfC\sm\Om)\sm\{\zeta\}}}
    \biggl|\log\Bigl|\frac{\zeta-z}{\zeta-\xi}\Bigr|\biggr|\,;
\]
note that the infimum is restricted to nearest boundary points $\zeta\in\B(z)=\bOm\cap\bD(z)$ for $z$ (that is, $\zeta\in\bOm$ with $\del(z)=|z-\zeta|$).  Also,
\[\kk:= \bigl(\lam_{01}(-1)\bigr)^{-1} = \Gam^4(1/4)/\bigl(4\pi^2\bigr)=4.3768796\dots.\]

The definition of $\bp$ is motivated by examining the standard lower bound for the hyperbolic metric on a twice punctured plane. The (\BPt) inequalities follow via domain monotonicity: the upper bound for $\lam(z)$ holds because $z$ lies on the conformal center of a certain annulus in $\Om$, and the lower bound holds because $\Om$ lies in a certain twice punctured plane.

A geometric interpretation for $\bp(z)$ is seen by defining $\bp(z,\zeta)$---for points $z\in\Om$ and $\zeta\in\B(z)$---as follows:
\[
  \bp(z,\zeta):=\inf_{\xi\in(\mfC\sm\Om)\sm\{\zeta\}} \biggl|\log\Bigl|\frac{\zeta-z}{\zeta-\xi}\Bigr|\biggr|.
  \qquad\text{(Thus, $\ds\bp(z)=\inf_{\zeta\in\B(z)}\bp(z,\zeta)$)}.
\]
We find that $2\bp(z,\zeta)$ is the conformal modulus of the maximal Euclidean annulus that is contained in $\Om$ and symmetric \wrt the circle $\Sone(\zeta;\del(z))$.  It follows that $2\bp(z)$ is the minimum of these numbers; so, $2\bp(z)$ is the smallest of these maximal moduli.

Thus we see that whenever $\bp(z)>0$, there is an annulus
\begin{gather*}
  \BP(z):=\A(\zeta;\del(z),\bp(z)) \in\mcA^1_\Om
  \intertext{associated with $z$; here $\zeta\in\B(z)$ is any nearest boundary point for $z$ that realizes $\bp(z)$, and}
  \forall\;\xi\in\bd\BP(z)\cap\bOm\,,\quad \bp(z)=\bp(z,\zeta)=\biggl|\log\Bigl|\frac{\zeta-z}{\zeta-\xi}\Bigr|\biggr|.
\end{gather*}
We call $\BP(z)$ a \emph{Beardon-Pommerenke annulus} (or briefly, a \emph{\BPt\ annulus}) associated with the point $z$; it needn't be unique.

\smallskip

Typically, the Beardon-Pommerenke inequalities (\BPt) are employed to give lower estimates for hyperbolic distance, but they can also provide useful upper estimates.

\begin{xx}  \label{X:h est} %
Let $0<a<b$ with $\log(b/a)>2$.  Put $\Om:=\mfC\sm\{0,-a,-b\}$.  Then
\[
  h(a,b) \le 4+\pi\log\Bigl(\half\log\frac{b}{a}\Bigr)\,.
\]
\end{xx} %
\begin{proof}%
Let $s:=\sqrt{ab}$.  Evidently, for $x\in[a,b]$, $\del(x)=x$ and
\begin{gather*}
  \bp(x)=\bigl|\log\frac{x}{a}\bigr|\wedge\bigl|\log\frac{b}{x}\bigr|\,.
  \intertext{Thus by (\BPt)}
  h(ae,s)\le\frac{\pi}2\int_{ae}^s\frac{dx}{x\log(x/a)}
    =\frac{\pi}2\log\log\bigl(\frac{b}{a}\bigr)^{1/2}
  \intertext{and similarly, $h(s,b/e)\le\frac{\pi}2\log\log(b/a)^{1/2}$.  Hence}
  h(a,b)\le 2k(a,ae)+h(ae,b/e)+2k(b/e,b) \le 4+\pi\log\Bigl(\half\log\frac{b}{a}\Bigr)
\end{gather*}
as claimed.
\end{proof}%


In \cite[Proposition~3.3]{HB-geodesics} we established especially useful estimates for $\bp$; these say that in any annulus in $\mcA^2_\Om(8\log2)$, the domain function $\bp$ decays `linearly' as we move away from the center circle.  The assumption that both boundary circles of the annulus meet $\bOm$ is crucial for obtaining the upper bounds; when only one boundary circle has a boundary point, $\bp$ can actually increase when we move away from the center circle towards the boundary circle that does not meet $\bOm$.  Here is a summary.

\begin{fact}  \label{F:BPEP} %
Let $\A(o;d,r)\in\mcA^2_\Om(8\log2)$, so $r>\log16$. For $|t|\lex r$ and all $z\in\Sone(o;de^t)$, $\bp(z)\eqx r-|t|$.  More precisely, for $|t|\le r-\log16$ and $z\in\Sone(o;de^t)$,
\[
  \frac12 \bigl( r-|t| \bigr) \le \bp(z) \le2 \bigl( r-|t| \bigr).
\]
In the above, $z\in\A[o;d,r-\log16]$ and $t:=\log(|z-o|/d)$.  In particular:

\smallskip\noindent(a)  In $\A[o;d,r-\log16]$, $\bp\le 2r=\md(A)$, and on $\Sone(A)=\Sone(o;d)$, $\bp\ge\half r$.

\smallskip\noindent(b)  On both boundary circles $\Sone(o;16^{\pm1}de^{\mp r})$, $2\log2 \le \bp \le 8\log2$.

\smallskip\noindent(c)  For $\log16<q<r$ we have the strict inequalities
\[
  \text{$\bp>\half\,q$ in $\A(o;d,r-q)$ \hspace{1em} and \hspace{1em} $\bp<2\,q$ in $\A(o;d,r-\log16)\sm\A[o;d,r-q]$}.
\]

\smallskip\noindent(d)  We also have estimates for $\bp$ in annuli $A=\A(o;d,r)\in\mcA^1_\Om$, as long as we move towards the boundary circle that has a boundary point.
\end{fact}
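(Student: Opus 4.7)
The plan is to determine the location of the nearest-boundary-point set $\B(z)$ and then directly bound $|\log(|\zeta - \xi|/\del(z))|$ from above and below using the definition of $\bp$. WLOG $t \ge 0$ and set $s := r - t$, so the hypothesis becomes $s \ge \log 16$; the case $t < 0$ is symmetric, obtained by swapping the roles of $\bdi A$ and $\bdo A$.

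The key geometric step is the inclusion $\B(z) \subset \D[o;de^{-r}]$. Since $A \subset \Om$, we have $\bOm \subset \D[o;de^{-r}] \cup (\mfC \setminus \D(o;de^r))$. The hypothesis $A \in \mcA^2_\Om$ provides a point $\xi_i \in \bdi A \cap \bOm$ with $|z - \xi_i| \le de^t + de^{-r}$, while any point in the outer complement has distance at least $de^r - de^t$ from $z$. A direct computation shows $de^r - de^t > de^t + de^{-r}$ whenever $t \le r - \log 16$ (equivalently $t < \log\sinh r$, which holds throughout the stated range). Hence every nearest boundary point of $z$ lies in $\D[o;de^{-r}]$, and consequently $\del(z) \in [de^t - de^{-r}, de^t + de^{-r}]$.

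With $\B(z)$ located, the two-sided bound on $\bp(z)$ is a short direct computation. For the lower bound, fix any $\zeta \in \B(z)$ and any $\xi \in (\mfC\setminus\Om) \setminus \{\zeta\}$. If $\xi \in \D[o;de^{-r}]$ the triangle inequality gives $|\zeta - \xi| \le 2de^{-r}$, hence $|\log(|\zeta-\xi|/\del(z))| \ge t + r - \log 4$; if $\xi \in \mfC \setminus \D(o;de^r)$ it gives $|\zeta - \xi| \ge de^r - de^{-r}$, hence $|\log(|\zeta-\xi|/\del(z))| \ge s - \log 4$. Both exceed $s/2$ once $s \ge \log 16$, yielding $\bp(z) \ge s/2$. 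For the upper bound, choose any $\zeta \in \B(z)$ and any $\xi \in \bdo A \cap \bOm$ (nonempty by $A \in \mcA^2_\Om$); then $\del(z) \ge de^t/2$ and $|\zeta - \xi| \le 2de^r$, so $|\zeta - \xi|/\del(z) \le 4e^s$ and $\bp(z) \le \log(4e^s) = s + \log 4 \le 2s$ since $s \ge 2\log 4$.

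The weaker equivalence $\bp(z) \eqx r - |t|$ for $|t| \lex r$ follows by admitting larger constants in the same argument. Parts (a)--(c) are then immediate: (a) combines $t = 0$ with the trivial global upper bound $\bp \le \md(A)$, (b) is the endpoint case $|t| = r - \log 16$, (c) follows from strict monotonicity of the bounds in $r - |t|$ over the stated open subannuli. For (d), the hypothesis $A \in \mcA^1_\Om$ provides only one boundary circle meeting $\bOm$, so the comparison that places $\B(z)$ in the opposite region works only on the side of that circle; this forces the restriction to movement toward that boundary circle in the conclusion. The main obstacle is the careful bookkeeping of $\log 2$ and $\log 4$ error terms in the radial comparisons and verifying that the threshold $r \ge \log 16$ is precisely what absorbs them into the clean factor-of-two bounds.
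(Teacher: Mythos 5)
Your argument is correct, and it takes what is surely the same route as the cited source: this paper does not prove Fact~\ref{F:BPEP} itself but quotes it from \cite[Proposition~3.3]{HB-geodesics}, so there is no in-paper proof to compare against. Your key geometric step --- using $A\subset\Om$ to split $\mfC\sm\Om$ into $\D[o;de^{-r}]\cup(\mfC\sm\D(o;de^r))$, then comparing $de^t+de^{-r}$ against $de^r-de^t$ to locate $\B(z)\subset\D[o;de^{-r}]$ when $t\ge0$ and $|t|\le r-\log 16$ --- is exactly what makes the two-sided bound tractable, and I verified the resulting inequalities: the lower bound splits into the two regions for $\xi$ and in each case yields $|\log(\del(z)/|\zeta-\xi|)|\ge s/2$ once $s\ge\log16$; the upper bound via $\xi\in\bdo A\cap\bOm$ (nonempty by $\mcA^2_\Om$) gives $\bp(z)\le s+\log4\le2s$. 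The only loose remark is that the final step needs $s\ge\log4$, not $s\ge2\log4$ as you wrote, but the stronger hypothesis is available so the conclusion stands. Parts (a)--(c) do follow by the specializations you describe, and the one-sided remark (d) is consistent with the reason your inclusion $\B(z)\subset\D[o;de^{-r}]$ requires $t\ge0$ (i.e., moving toward the boundary circle that actually meets $\bOm$).
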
 %

Recall from above that for each $z\in\Om$ with $\bp(z)>0$ there is an associated \emph{Beardon-Pommerenke annulus}
\[
  \BP(z):=\A(\zeta;\del(z),\bp(z))\in\mcA^1_\Om \quad\text{(so, $\BP(z)\subset\Om$ and $\bd\BP(z)\cap\bOm\ne\emptyset$)}.
\]

\subsubsection{The ABC Property}  \label{ss:ABC} %
A path $\mfR\supset I\xra{\gam}\Om$ has the \emph{arcs bounce or cross property} with parameters $\mu>0$ and $\nu>0$, abbreviated as the \emph{$(\mu,\nu)$-ABC property}, \ifff for each compact subpath $\alf$ of $\gam$ and for each annulus $A:=\A(o;d,\nu)\in\mcA_\Om$ such that $\mathsf{S}^1(A)\supset\bd\alf$, we have $|\alf|\subset\A(o;d,\mu)$.

The ABC property implies that the path can cross a moderate size annulus at most once, so if the path enters deep into an annulus, it either stays there  or it crosses (once) and never returns.  In particular, if the path goes near an ``isolated island or archipelago'' of $\Om^c\cup\{\infty\}$, then it stays near it.  There are various precise statements of this phenomenon given in \cite[Lemma~3.7]{HB-geodesics}.

We utilize the following; see \cite[Lemma~3.7(a), Prop.~3.8]{HB-geodesics}.

\begin{facts}  \label{F:ABC info} %
\begin{enumerate}[\rm(a), wide, labelwidth=!, labelindent=0pt]
  \item  Suppose $\mfR\supset I\xra{\gam}\Om$ has the \emph{ABC property} with parameters $\mu\ge\nu>0$.  If $\A(o;d,\nu)\in\mcA_\Om$, then $\gam$ crosses $\A(o;d,\mu)$ at most once.
  \item  Quasihyperbolic geodesics have the $(\pi,\log2)$-ABC property,
  \item  Hyperbolic geodesics have the $(3\kk,5/2)$-ABC property.
\end{enumerate}
\end{facts}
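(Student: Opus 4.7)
The plan is to prove (a) as a clean consequence of the ABC definition, and to establish (b) and (c) by the same two-step scheme: an upper bound on the relevant conformal-metric distance $k_\Om(p,q)$ or $h_\Om(p,q)$ between the endpoints of a candidate subpath (obtained via domain monotonicity against the annulus $\A(o;d,\nu)\subset\Om$), combined with a matching radial lower bound that forces any excursion of the geodesic past $\A(o;d,\mu)$ to cost too much.

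For (a), I would argue by contradiction. Suppose $\gam$ admits two disjoint maximal subpaths $\alf_1,\alf_2\subset\A[o;d,\mu]$, each joining the inner boundary circle $\Sone(o;de^{-\mu})$ to the outer $\Sone(o;de^\mu)$. Each $\alf_i$ meets every intermediate circle $\Sone(o;r)$ with $de^{-\mu}\le r\le de^\mu$, so I may select parameters $t_1<t_2$ with $\gam(t_i)\in\Sone(o;d)\cap|\alf_i|$. Since $\Sone(o;d)$ is also the center circle of $\A(o;d,\nu)$, the $(\mu,\nu)$-ABC property applied to $\gam\vert_{[t_1,t_2]}$ together with $\A(o;d,\nu)\in\mcA_\Om$ forces $\gam\vert_{[t_1,t_2]}\subset\A(o;d,\mu)$. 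But by maximality of $\alf_1,\alf_2$ the portion of $\gam\vert_{[t_1,t_2]}$ strictly between them lies outside $\A[o;d,\mu]$, a contradiction.

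For (b), let $\alf$ be a subpath of a quasihyperbolic geodesic with endpoints $p,q\in\Sone(o;d)$, and assume $\A(o;d,\log2)\in\mcA_\Om$. The inclusion $\A(o;d,\log2)\subset\Om$ forces $\mfC\sm\Om\subset\D[o;d/2]\cup(\mfC\sm\D(o;2d))$, whence $\del\ge d/2$ on $\Sone(o;d)$; the shorter circular arc from $p$ to $q$ on $\Sone(o;d)$ then has $k$-length at most $\pi d\cdot(2/d)=2\pi$, giving $k(p,q)\le2\pi$. For the lower bound, take $z\in|\alf|$ with $|z-o|=de^t$. When $t>0$ use $\del(z)\le de^t$, $\del(p)\le d$, and $|p-z|\ge d(e^t-1)$ in \eqref{E:k ge j} to obtain $k(p,z)\ge t$; when $t<0$ use $\del(z)\le de^{-|t|}$ and $|p-z|\ge d(1-e^{-|t|})$ to get $k(p,z)\ge|t|$. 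The same bound holds for $k(z,q)$. Since $\alf$ is a subarc of a geodesic,
\[
  2|t|\le k(p,z)+k(z,q)=k(p,q)\le2\pi,
\]
so $|t|\le\pi$ and $|\alf|\subset\A(o;d,\pi)$.

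For (c) the scheme is identical, with $h$ replacing $k$. The upper bound $h_\Om(p,q)\le\pi^2/5$ follows from domain monotonicity together with the classical formula $\lam=\pi/(5d)$ for the hyperbolic density of the annulus $\A(o;d,5/2)$ on its center circle. The lower bound on $h_\Om(p,z)$ comes from the Beardon--Pommerenke inequality $\lam\ge[\del(\kk+\bp)]^{-1}$ of (\BPt), integrated against the radial coordinate along any path from $p$ to $z$. The principal obstacle is controlling $\bp$ along this path: once a uniform bound $\bp\lex\kk$ is available on the relevant segment, one obtains $h_\Om(p,z)\gex|t|/\kk$, and combining with $h_\Om(p,z)+h_\Om(z,q)=h_\Om(p,q)\le\pi^2/5$ gives $|t|\le3\kk$. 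The required $\bp$-bound is to be extracted either from \rf{F:BPEP} applied to a suitable sub-annulus of $\A(o;d,5/2)$, or by comparing $\Om$ to a twice-punctured plane $\mfC\sm\{o,\xi\}$ for a well-chosen boundary point $\xi$ and invoking \rf{F:h in Ds&C01}(c); the case $t<0$ is handled symmetrically.
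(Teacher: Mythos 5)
The paper does not prove this statement; it records it as a \emph{Fact} and cites \cite{HB-geodesics} (Lemma~3.7(a), Proposition~3.8), so there is no in-paper proof to compare against. Your parts (a) and (b) are essentially correct. In (a), the claim that ``the portion of $\gamma|_{[t_1,t_2]}$ strictly between them lies outside $\A[o;d,\mu]$'' overstates what maximality gives---$\gamma$ could re-enter $\A[o;d,\mu]$ on a non-crossing subarc---but the momentary exit just past $\alf_1$ already contradicts $|\gamma|_{[t_1,t_2]}|\subset\A(o;d,\mu)$, so the conclusion stands. In (b) your two bounds yield only the non-strict $|t|\le\pi$, i.e.\ $|\alf|\subset\A[o;d,\pi]$ rather than the open annulus appearing in the definition; this is a cosmetic mismatch but worth noting.

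Part (c) has a genuine gap. The pivotal step ``once a uniform bound $\bp\lex\kk$ is available on the relevant segment, one obtains $h_\Om(p,z)\gex|t|/\kk$'' cannot be carried out, because no such uniform bound holds. For instance take $\Om=\Coo$, $o=0$, and $d=e^{-m}$ with $m\ge5/2$: then $\A(0;d,5/2)\in\mcA_\Om$, yet for every $z\in\Sone(0;d)$ one has $\B(z)=\{0\}$ and $\bp(z)=|\log d|=m$, which is as large as you please. Neither of your two suggested sources repairs this. \rf{F:BPEP} applies only to annuli in $\mcA^2_\Om$, and even then gives $\bp\le\md(A)$ on the center circle, a bound that scales with the annulus rather than being absolute. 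The twice-punctured-plane comparison via \rf{F:h in Ds&C01}(c) gives only $h_\Om(p,z)\ge\log\bigl(1+t/(\kk+\log(d/|\xi-o|))\bigr)$, which degenerates to zero as the second puncture $\xi$ recedes from $o$. What is actually needed is a balancing argument: precisely in the regime where $\bp$ is large on $\Sone(o;d)$, there is a correspondingly fat annulus $\A(o;d,m)\subset\Om$, so the \emph{upper} bound improves from $\pi^2/5$ to $\pi^2/(2m)$ at the same rate the lower bound weakens, and the two effects cancel. Making that interplay quantitative is the heart of (c); asserting a uniform $\bp$-bound skips it, and as the $\Coo$ example shows, that shortcut is unavailable.
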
 %

\subsection{Gromov Hyperbolicity and Uniformity}  \label{ss:GH&U} %

Thanks to the ground-breaking work in \cite{BHK-unif}, we know that Gromov hyperbolicity and uniformity are intimately connected.

Roughly speaking, a metric space is \emph{uniform} when points in it can be joined by paths which are not ``too long'' and which ``move away'' from the region's boundary.  More precisely, $\Om\subset\mfC$ is \emph{$C$-uniform} (for some constant $C\ge1$) provided each pair of points can be joined by a $C$-uniform arc.  Here a rectifiable arc $\gam:a\cra b$ is a \emph{$C$-uniform arc} \ifff it is both a \emph{$C$-quasiconvex arc} and a \emph{double $C$-cone arc}; these conditions mean, respectively, that
\begin{subequations}\label{E:unif}
 \begin{gather}
   \ell(\gam) \le C|a-b|  \label{E:u:qcx}
   \intertext{and}
   \forall\; z\in|\gam|\,, \quad \ell(\gam[z,a])\wedge\ell(\gam[z,b])\le C \del(z)\,.  \label{E:u:dcone}
 \end{gather}
\end{subequations}

Martio and Sarvas introduced the notion of a uniform domain in \cite{MS-unif}, and this has proven to be invaluable in geometric function theory and especially for the ``analysis in metric spaces'' program.  A simply connected proper subdomain of the plane is uniform \ifff it is a quasidisk.  Each uniform domain has the Sobolev extension property, and the BMO extension property characterizes uniformity.  See \cite{G-qdisks} and the many references therein, especially \cite{Jones-sob-ext,Jones-bmo-ext}.

\medskip

A geodesic metric space $X$ is \emph{Gromov hyperbolic} if there exists a constant $\tha\ge0$ such that every geodesic triangle is \emph{$\tha$-thin}, meaning that each point on any edge of the triangle is at distance at most $\tha$ from the other two edges.  See \cite[Chapter 3]{BHK-unif}, \cite{Brid-Haef}, \cite{BBI-metricgeometry}, or \cite{V-Gromov} and the many references in these.

The \emph{Gromov boundary} $\bd_G X$ of a Gromov hyperbolic space $X$ is the set of equivalence classes of geodesic rays, where two rays are equivalent \ifff their Hausdorff distance is finite.  One can also use quasi-geodesic rays, or, Gromov sequences.  There is no canonical preferred distance on the Gromov boundary.  However, for each $\veps\in(0,\veps_0]$ (usually $\veps_0=\veps_0(\tha):=1\wedge(1/5\tha)$) there is a so-called \emph{visual distance} $d_\veps=d_{\veps,o}$ on $\bd_G X$ that satisfies
\begin{gather*}
  \half\, \exp\bigl(-\veps(\xi\vert\eta)_o\bigr) \le d_\veps(\xi,\eta) \le \exp\bigl(-\veps(\xi\vert\eta)_o\bigr)
  \intertext{for all $\xi,\eta\in\bd_G X$, where $(\xi\vert\eta)_o$ is the usual Gromov product and $o\in X$ a fixed base point.  Standard estimates then give}
  C^{-1} \exp\Bigl(-\veps\, \dist(o,(\xi,\eta)\bigr) \Bigr) \le d_\veps(\xi,\eta) \le C \exp\Bigl(-\veps\,
  \dist(o,(\xi,\eta)\bigr) \Bigr)
\end{gather*}
where $(\xi,\eta)$ is any geodesic line in $X$ with endpoints $\xi,\eta\in\bd_G X$.

The \emph{conformal gauge on $\bd_G X$} is the maximal collection of all distance functions on $\bd_G X$ that are \qsc ally equivalent to some (hence all) visual distance(s).

%
\section{Proofs of Theorems}  \label{S:Proofs} 

\flag{Should we indicate a proof about when $(\Om,h)$ and $(\Om,k)$ are isometric?  We could create an Appendix where we put this and perhaps other items e.g. related to uniform perfectness and a proof of \rf{L:cfml metrics}.}

\subsection{Proof of Theorem~\ref{TT:QSequiv}}  \label{s:pfThmQSequiv} %

We employ the following technical fact about ``fat'' annuli.

\begin{lma} \label{L:fat ann} %
Suppose $m:=\half\md(A)>1$ for some annulus $A\subset\Om$ with center in $\mfC\sm\Om$ and $\bA\cap\bOm\ne\emptyset$.
Then there are points $a,b,c\in A$ with:
\addtocounter{equation}{-1}  
\begin{subequations}\label{E:chi ests}
  \begin{alignat}{2}
    &\half(m-1)\le k \le m-1 \,, &\quad&\text{for $k\in\{k(a,b),k(c,b)\}$},              \label{E:ks comp}
    \intertext{and}
    &h(a,b)\ge\log\Bigl(1+\frac{m-1}{2(\kk+1)}\Bigr)\,, &&\text{whereas $h(c,b)\le1.1$}. \label{E:hs diff}
  \end{alignat}
\end{subequations}
\end{lma}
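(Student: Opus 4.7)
The plan is to exhibit explicit points $a, b, c$ after normalizing $A$, then bound $h$ by domain monotonicity in two directions: the lower bound on $h(a,b)$ via the containing twice-punctured plane $\mfC \setminus \{0,\zeta\}$, and the upper bound on $h(c,b)$ via the contained annulus $A$. The quasihyperbolic distances then fall out of a direct radial computation.

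After translation, scaling, and rotation we reduce to $A = \{e^{-m} < |z| < e^m\}$ with $0 \in \mfC \setminus \Om$ and the hypothesized boundary point $\zeta \in \bOm \cap \bar A$ satisfying $\zeta = e^{-m}$ on the inner circle; the case when $\zeta$ lies on the outer circle is handled by the analogous construction. Set
\[
  a := e\,\zeta = e^{1-m}, \qquad b := e^{(m+1)/2}\,\zeta = e^{-(m-1)/2}, \qquad c := 1,
\]
all on the positive real axis. For $t \in (e^{-m}, 1]$ on that axis the nearest boundary point is $\zeta$: the rest of $\bOm$ lies in $\{|w| \le e^{-m}\} \cup \{|w| \ge e^m\}$, contributing distances at least $t - e^{-m}$ and $e^m - t$ respectively, both at least $t - e^{-m} = |t - \zeta|$. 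So $\delta(t) = t - e^{-m}$ on $[a,c]$, the radial path is $k$-extremal, and the lower bound from \rf{E:k ge j} matches it, yielding
\[
  k(p,q) = \log\frac{q - e^{-m}}{p - e^{-m}} \quad\text{for } e^{-m} < p < q \le 1.
\]
Substituting gives $k(a,b) = \log\bigl((e^{(m+1)/2}-1)/(e-1)\bigr)$ and $k(c,b) = \log\bigl((e^m-1)/(e^{(m+1)/2}-1)\bigr)$; each of the four required inequalities $(m-1)/2 \le k \le m-1$ reduces to an elementary exponential inequality that is immediate for $m \ge 1$.

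For the hyperbolic lower bound, $\{0,\zeta\} \subset \mfC \setminus \Om$ and domain monotonicity give $h \ge h_{\{0,\zeta\}}$, while the isometric scaling $z \mapsto z/\zeta$ identifies $(\mfC \setminus \{0,\zeta\}, h_{\{0,\zeta\}})$ with $(\Coo, h_{01})$. Since $1 \le a/\zeta = e \le b/\zeta = e^{(m+1)/2}$, the first case of \rf{F:h in Ds&C01}(c) yields
\[
  h(a,b) \ge h_{01}\bigl(e, e^{(m+1)/2}\bigr) \ge \log\frac{\kk + (m+1)/2}{\kk + 1} = \log\Bigl(1 + \frac{m-1}{2(\kk+1)}\Bigr).
\]
For the upper bound on $h(c,b)$, $A \subset \Om$ gives $h \le h_A$ on $A$, and the universal cover $z = e^w$ identifies $A$ with the strip $S_m := \{|\Re w| < m\}$ equipped with hyperbolic density $\pi/(2m\cos(\pi \Re w/(2m)))$. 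The nearest lifts of $c$ and $b$ are $w_c = 0$ and $w_b = -(m-1)/2$, both real; since reflection $\Im w \mapsto -\Im w$ is an isometry of $S_m$ fixing both endpoints, the geodesic between them is the real-axis segment, and
\[
  h(c,b) \le h_A(c,b) = \int_{-(m-1)/2}^{0} \frac{\pi\,ds}{2m\cos(\pi s/(2m))} = \log(\sec\theta + \tan\theta),
\]
where $\theta := \pi(m-1)/(4m) < \pi/4$; hence $h(c,b) < \log(1 + \sqrt{2}) < 1.1$.

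The main obstacle is arranging $h(a,b)/h(c,b)$ to be of order $\log m$ while both $k$-distances stay in the narrow window $[(m-1)/2, m-1]$. The resolution is that $a$ must hug the boundary point $\zeta$ so that the puncture-like growth of $\lambda_{01}$ in the twice-punctured plane inflates $h(a,b)$, but not so closely that $k(a,b)$ overshoots $m-1$; the specific choice $|a/\zeta| = e$, i.e.\ $\log|a/\zeta| = 1$, is exactly what produces the denominator $\kk + 1$ in \rf{F:h in Ds&C01}(c), yielding the factor $2(\kk + 1)$ in the statement, and $b$ is then determined so that the lower bound on $h(a,b)$ comes out to precisely the prescribed form.
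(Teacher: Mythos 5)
Your proof is correct, and the blueprint matches the paper's: normalize $A = \{e^{-m} < |z| < e^m\}$ with $0 \in \mfC\sm\Om$, choose $a, b, c$ at radii $e^{1-m}, e^{(1-m)/2}, 1$, control $k$ by comparison with $\Cstar$, control $h(a,b)$ from below via monotonicity against a twice-punctured plane and \rf{F:h in Ds&C01}(c), and control $h(c,b)$ from above by a domain inclusion. Two of your choices differ from the paper's and are worth flagging. First, you rotate so that the hypothesized boundary point $\zeta = e^{-m}$ lies on the \emph{same} radial ray as $a, b, c$, whereas the paper places it at $-e^{-m}$ on the opposite ray. Your normalization gives $\del(t) = t - e^{-m}$ exactly on $(e^{-m}, 1]$ and hence an exact formula for $k$ along the segment, but the resulting numerical inequalities $\half(m-1)\le k\le m-1$ are not all quite ``immediate'': the two upper bounds reduce to $e^{(m+1)/2}-1\le(e-1)e^{m-1}$ and $e^m-1+e^{m-1}\le e^{(3m-1)/2}$, which need a short factoring argument (in $x:=e^{(m-1)/2}$ these become $(e-1)(x-1)(x-\tfrac1{e-1})\ge0$ and $(x-1)(ex^2-x-1)\ge0$, both clear for $x\ge1$). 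The paper bypasses this by invoking \rf{F:k ests in A} to get $k_\star\le k\le2k_\star$ and read off the bounds in one line --- a shortcut that your normalization also admits, since that fact only requires $0\in\mfC\sm\Om$. Second, for $h(c,b)$ you use $h\le h_A$ (since $A\subset\Om$) and compute $h_A(c,b)$ exactly via a strip lift, obtaining the sharper constant $\log(1+\sqrt2)\approx0.88$ in place of the paper's $\tfrac\pi2\log\tfrac{2m}{m+1}<1.1$, which comes from integrating the (\BPt) upper estimate along $[b,c]$. The direct annulus computation is a clean and slightly sharper alternative.
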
 
\begin{proof}%
By similarity invariance we may assume $A=\{z:e^{-m}<|z|<e^m\}$ and either $-e^{-m}\in\bOm$ or $-e^m\in\bOm$; here $m>1$.  Assume $-e^{-m}\in\bOm$ and define
\[
  a:=e^{1-m}\,,\quad b:=\sqrt{a}\,,\quad  c:=1.
\]
We demonstrate that these points possess the asserted properties.

Thanks to \rf{F:k ests in A} we know that $k_\star\le k \le 2k_\star$ in $A$.  Since
\[
  2\,k_\star(a,b)=2\,\log\frac{|b|}{|a|}=\log\frac1{|a|}=m-1=2\,k_\star(c,b),
\]
the inequalities in \eqref{E:ks comp} follow.

Since $\Om\subset\mfC\sm\{0,-e^{-m}\}$, an appeal to \rf{F:h in Ds&C01}(c) provides the estimate
\begin{align*}
    h(a,b) &\ge h_{-e^{-m}0}(a,b)=h_{01}(-e^m a,-e^m b)  \\
           &\ge\log\frac{\kk+\log(e^m b)}{\kk+\log(e^m a)} = \log\Bigl(1+\frac{m-1}{2(\kk+1)}\Bigr).
\end{align*}

Employing (\BPt) we deduce that
\begin{align*}
  h(c,b) &\le \int_{[b,c]}\lam\,ds \le\frac{\pi}2\,\int_{[b,c]}\frac{ds}{\del\,\bp}  \\
         &=\frac{\pi}2\,\int_b^c \frac{dt}{t(m+\log t)} = \frac{\pi}2\,\log\frac{2m}{m+1} < 1.1.
\end{align*}

When $-e^{m}\in\bOm$ we take $a:=e^{m-1},b:=\sqrt{a},c:=1$ and argue similarly.
\end{proof}%

\begin{pf}{Proof of \rf{TT:QSequiv}} %
Thanks to the work \cite{BP-beta} of Beardon and Pommerenke, it suffices to explain why (\ref{TT:QSequiv}.1) implies \eqref{E:bounded mod}, but perhaps it is illuminating to see why (\ref{TT:QSequiv}.2) implies \eqref{E:bounded mod}; these implications are both quantitative.

With this in mind, suppose $(\Om,h)\xra{f}(\Om,k)$ is $K$-\BL.  Let $A\in\mcA_\Om$, and let $\alf$ be the simple loop in $\Om$
whose trajectory is the center circle $|\alf|=\Sone(A)$.  Using hyperbolic distance in $A\subset\Om$, we deduce that
\begin{gather*}
  \ell_h(\alf)\le2\pi^2/\md(A).
  \intertext{Since $f\comp\alf$ is an essential loop in $\Om$, we obtain}
  2\pi\le\ell_k(f\comp\alf)\le K\,\ell_h(\alf) \le 2K\pi^2/\md(A)
\end{gather*}
and thus $\md(A)\le\pi K$.

\medskip

Now suppose $(\Om,h)\xra{f}(\Om,k)$ is $\eta$-QS.  Thanks to \rf{L:cfml metrics} we know that $\Om\xra{f}\Om$ is $K$-QC where
$K=K(\eta(1))$ depends only on the value $\eta(1)$.  Let $C=C(K)=C(\eta(1))$ be the constant from \rf{F:GO-k-RQI}.  We establish
\eqref{E:bounded mod} with the upper bound
\[
  M:=\max\{4C+2,2+4(\kk+1)\exp\Bigl(1.1\bigl(\eta^{-1}(1/2C^2)\bigr)^{-1}\Bigr)\}.
\]

Let $A\in\mcA_\Om$.  By enlarging $A$ if necessary, we may assume that $\bA\cap\bOm\ne\emptyset$, and also that
$m:=\half\md(A)>2C+1$.  Let $a,b,c$ be the points in $A$ given by \rf{L:fat ann} and let $a',b',c'$ be their $f$ images.  Since
$k(a,b)\ge\half(m-1)\ge1$, \rf{F:GO-k-RQI} tells us that
\begin{gather*}
  k(a',b')\le C\,k(a,b).
  \intertext{The same fact, now applied to $f^{-1}$, tells us that}
  C\,\max\{k'(c',b'),k'(c',b')^p\}\ge k(c,b) \ge \half(m-1) \ge C,
  \intertext{so $k'(c',b')\ge 1$ whence $k(c,b)\le C\,k'(c',b')$.  Therefore}
  k'(a',b')\le C\,k(a,b) \le 2C\, k(c,b) \le 2C^2 k'(c',b')
  \intertext{and thus by \qsy}
  \frac1{2C^2} \le \frac{k'('c',b')}{k'(a',b')} \le \eta\Bigl(\frac{h(c,b)}{h(a,b)}\Bigr)
                \le \eta\Bigl(1.1\bigl(\log(1+\frac{m-1}{2(\kk+1)}\bigr)^{-1} \Bigr)
\end{gather*}
which gives the asserted estimate $2m\le M$.
\end{pf} %

\subsection{Proof of Theorem~\ref{TT:Gromov}}  \label{s:pfThmGromov} %

It seems plausible, especially in light of \rf{P:For unif=>QS}(g) below (and its quasihyperbolic analog), that one could give a direct proof of \rf{TT:Gromov}.  The authors are unable to do so, and we instead base our proof on the following hyperbolic analog of \cite[Theorems~3.6]{BHK-unif}.  Note that it depends heavily on \cite[Theorems~1.11, 1.12, and Prop.~7.12]{BHK-unif} and these in turn depend on the Bonk-Heinonen-Koskela uniformization theory.  Our proof also utilizes the fact \cite[Theorem~A]{HB-geodesics} that hyperbolic and quasihyperbolic quasi-geodesics are the same curves; in particular, in uniform domains  hyperbolic geodesics are uniform arcs \cite[Remarks~4.3]{DAH-hlexj}.

\begin{thm} \label{T:unif=>QS} %
Let $\Om$ be a hyperbolic domain in $\RS$.  Suppose $(\Om,\lsig)$ is uniform.  Then the canonical conformal gauge on $\bd_G(\Om,h)$ is naturally \qsc ally equivalent to the conformal gauge on $\bd(\Om,\lsig)$
determined by $\lsig$.\footnote{%
Recall that here $\lsig$ denotes the intrinsic length distance in $(\Om,\sig)$.}
\end{thm}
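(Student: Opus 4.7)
The plan is to apply the Bonk--Heinonen--Koskela uniformization framework to the quasihyperbolic setting first, and then to transfer the result to the hyperbolic setting by exploiting the fact that on uniform domains the two metrics share their quasi-geodesics.

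\emph{Step 1 (quasihyperbolic prototype).} First I would invoke \cite[Theorems~1.11, 1.12, Prop.~7.12]{BHK-unif}, applied to $(\Om,\lsig)$, to conclude that $(\Om,k_\sig)$ is Gromov hyperbolic and that $\bd_G(\Om,k_\sig)$, with its canonical conformal gauge, is naturally \qsc ally equivalent to $\bd(\Om,\lsig)$ with the gauge determined by $\lsig$. This is the quasihyperbolic analog of our theorem and is essentially a reformulation of \cite[Theorem~3.6]{BHK-unif} in our chordal/spherical setting.

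\emph{Step 2 (identify Gromov boundaries as sets).} Since $(\Om,\lsig)$ is uniform, hyperbolic geodesics are uniform arcs by \cite[Remarks~4.3]{DAH-hlexj}; combined with \cite[Theorem~B]{HB-geodesics} this ensures that $(\Om,h)$ is Gromov hyperbolic. Then \cite[Theorem~A]{HB-geodesics} says that hyperbolic and quasihyperbolic quasi-geodesics coincide as curves. Thus every hyperbolic geodesic ray, when reparametrized, becomes a quasihyperbolic quasi-geodesic ray and vice versa, so the identity on $\Om$ induces a canonical bijection
\[
  \bd_G(\Om,h) \;\longleftrightarrow\; \bd_G(\Om,k_\sig)
\]
sending equivalence class of ray to equivalence class.

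\emph{Step 3 (QS equivalence of conformal gauges).} I would then show that the bijection from Step~2 is a \qsc\ equivalence of conformal gauges; composing with the equivalence supplied by Step~1 gives the theorem. Fix a base point $o\in\Om$. Up to QS equivalence, the visual metric on either Gromov boundary is $d_\veps(\xi,\eta)\asymp\exp(-\veps\,\dist(o,\gam))$, where $\gam$ is any quasi-geodesic line from $\xi$ to $\eta$. Because the same uniform arc $\gam$ serves as a quasi-geodesic line in both $(\Om,h)$ and $(\Om,k_\sig)$, the task reduces to controlling the ratio of $\dist_h(o,\gam)$ to $\dist_{k_\sig}(o,\gam)$ as $\gam$ varies over uniform arcs.

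\emph{The main obstacle is Step~3.} The easy direction is $h\le 2k$, but the reverse can fail badly: when $\RS\sm\Om$ is not uniformly perfect, $k/h$ is unbounded, so $(\Om,h)$ and $(\Om,k_\sig)$ need not be BL or even QI equivalent. The key resource for the necessary comparison along uniform arcs is the Beardon--Pommerenke inequality (\BPt) together with the decay estimates for $\bp$ in \rf{F:BPEP}: the double-cone property of a uniform arc $\gam$ places BP-annuli of controlled modulus near its point of closest approach to $o$, which lets one quantify how $h$-arclength and $k_\sig$-arclength along $\gam$ differ in terms of $\bp$-values there. The resulting comparison of Gromov products $(\xi\vert\eta)_o^h$ and $(\xi\vert\eta)_o^{k_\sig}$ is not BL, but it is tame enough—essentially logarithmic in $\bp$—to preserve the QS class of the visual metric, which is all the conformal gauge formulation requires.
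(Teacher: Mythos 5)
Your Step~3 is a genuine gap, and it is exactly the one the authors flag: in \S\ref{s:pfThmGromov} the paper opens by remarking that ``it seems plausible\dots that one could give a direct proof of \rf{TT:Gromov}'' (i.e.\ a direct \qsc\ comparison of $\bd_G(\Om,h)$ and $\bd_G(\Om,k_\sig)$, precisely your Steps~2--3) but that ``the authors are unable to do so.'' Your sketch does not close this gap. The Beardon--Pommerenke inequality (\BPt) lower-bounds $\lam(z)$ by $\bigl(\del(z)(\kk+\bp(z))\bigr)^{-1}$, so to bound $h$ \emph{from below} in terms of $k$ along a uniform arc you would need an \emph{upper} bound on $\bp$ along that arc. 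When $\RS\sm\Om$ fails to be uniformly perfect, $\bp$ is unbounded, and the decay estimates of \rf{F:BPEP} apply only inside annuli $A\in\mcA^2_\Om$ where \emph{both} boundary circles meet $\bOm$; they say nothing when a uniform arc plunges into a degenerate end. Moreover, the paper's \rf{TT:example} exhibits a uniform domain $\Om$ for which $\alf\,k(x_n,x_{n+1})-h(x_n,x_{n+1})\to\infty$ along a hyperbolic/quasihyperbolic geodesic ray, so the comparison of Gromov products you would need to make in Step~3 is not merely non-BL but actively unbounded along quasi-geodesic rays; ``essentially logarithmic in $\bp$'' is an optimistic guess that the paper's own example undercuts. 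That example also shows that whatever distortion function you produce in Step~3 cannot be a power function, which is consistent with — but substantially harder to establish than — your claim.

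The paper avoids this by \emph{not} comparing $h$ to $k$ at all in the lower-bound direction. The proof shows directly that the bijection $\bd(\Om,\lsig)\to\bd_G(\Om,h)$ of \rf{P:For unif=>QS}(d) is a \qsy. The cheap direction ($t=\lsig(\zeta,\xi)/\lsig(\zeta,\eta)\ge1$) is indeed imported verbatim from \cite[Theorem~3.6]{BHK-unif} using $h\le2k\le8\ksig$, as you anticipate. But for $t<1$ one must prove $h(x,y)\to\infty$ as $t\to0^+$ where $x,y$ are the associated points on $[o,\zeta)_h$, and here the lower bound on $h$ comes from \emph{domain monotonicity into a thrice-punctured sphere}, not from any relation to $k$: \rf{L:Euc bdy pts} (whose proof uses the ABC property from \rf{ss:ABC}) locates genuine spherical boundary points $\zeta_0,\xi_1,\eta_1\in\bOmh$ with $\chi(\xi_1,\zeta_0)\eqx\lsig(\xi,\zeta)$ and $\chi(\eta_1,\zeta_0)\eqx\lsig(\eta,\zeta)$; a \MT\ $T$ sends $(\zeta_0,\xi_1,\eta_1)\mapsto(0,1,\infty)$; and then $h\ge h_{\RS\sm\{\zeta_0,\xi_1,\eta_1\}}=h_{01}\comp T$ is estimated from below by the explicit $\Coo$ estimates of \rf{F:h in Ds&C01}(c). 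This gives a lower bound for $h(x,y)$ that is completely insensitive to how large $\bp$ becomes — exactly the resilience your BP-based sketch lacks. If you want to pursue your route, you would essentially need to first prove the content of \rf{TT:Gromov} independently; but the paper obtains \rf{TT:Gromov} as a \emph{consequence} of this theorem (via a conformal map to a slit domain), so your plan inverts the logical order of the paper and leaves the hardest step unproved.
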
 
The above is quantitative, but the constants are somewhat murky!

Bonk, Heinonen, and Koskela established a similar result \cite[Theorem~3.6]{BHK-unif} for abstract uniform metric spaces but
using quasihyperbolic distance in lieu of hyperbolic distance.  We closely follow their proof, but there are significant
modifications that we detail.

In particular, we utilize the following information; much of this is either a direct consequence of work in \cite{BHK-unif}, or
follows by similar reasoning, the latter being especially true whenever only upper estimates for quasihyperbolic distance are
employed (because always, $h\le2k$).  See especially \cite[Chapters~2 and 3]{BHK-unif}.  We sketch the ideas.

\flag{Surely with Ferrand distance we can seriously improve the constant mess.}

Below, and later, when $(\Om,h)$ is Gromov hyperbolic (in which case $(\Om,k)$ is also Gromov hyperbolic), we write $h_\veps=h_{\veps,o}$ and $k_\veps=k_{\veps,o}$ for the standard visual distances on $\bd_G(\Om,h)$ and $\bd_G(\Om,k)$ repectively; here, as in \rf{ss:GH&U}, the visual parameter $\veps\in(0,\veps_0]$ and $o\in\Om$ is a fixed base point.

\begin{prop} \label{P:For unif=>QS} %
Let $\Om$ be a hyperbolic domain in $\RS$ with $(\Om,\lsig)$ $A$-uniform.  There are constants $\tha,B,C$ (that depend only on the ``data'') such that the following hold.
\begin{enumerate}[\rm(a), wide, labelwidth=!, labelindent=0pt]
  \item  The metric space $(\Om,h)$ is Gromov $\tha$-hyperbolic.
  \smallskip
  \item  There is $o\in\Om$ with $\sig(o)=\max_\Om\sig$, $\ds 2\le\diam(\Om,\lsig)/\sig(o)\le2A$, so $\diam(\Om,\lsig)\le2\pi A$.
  \smallskip
  \item Each pair of distinct points in $\overline{(\Om,\lsig)}$ can be joined by a hyperbolic geodesic which is a $B$-uniform arc in $(\Om,\lsig)$; when one or both points lie in $\bd(\Om,\lsig)$, we get a hyperbolic geodesic ray or line respectively.
  \smallskip
  \item  There is a natural bijection between $\bd_G(\Om,h)$ and $\bd(\Om,\lsig)$ given by $[\gam]\mapsto\gam(\infty)$ where $[\gam]\in\bd_G(\Om,h)$ is the equivalence class of a hyperbolic geodesic ray $\gam$ that has the endpoint $\gam(\infty)\in\bd(\Om,\lsig)$.  Therefore, we use the same notation for points in $\bd_G(\Om,h)$ or $\bd(\Om,\lsig)$.
  \smallskip
  \item  Given $\zeta\in\bd(\Om,\lsig)$ and a hyperbolic geodesic ray $[o,\zeta)_h$ in $\Om$, each $\xi\in\bd(\Om,\lsig)$ has an associated point $x=x(\xi)\in[o,\zeta)_h$ with
      \begin{gather*}
        h\bigl(x,(\xi,\zeta)_h\bigr)\le C
        \intertext{and}
        h(o,x)-C \le  h\bigl(o,(\xi,\zeta)_h\bigr) \le h(o,x)+C.
        \intertext{In fact, if $\gam$ is the $\sig$-arclength parametrization for $[o,\zeta)_h$, starting at $\gam(0)=\zeta$,
        then we can take}
        x:=\begin{cases}
          \gam\bigl(\lsig(\xi,\zeta)\bigr) &\text{when $\lsig(\xi,\zeta)\le\half\lsig(o,\zeta)$} \\
          o &\text{otherwise}.
        \end{cases}
      \end{gather*}
  \smallskip
  \item  Given $\zeta,\xi\in\bd(\Om,\lsig)$ and $x=x(\xi)\in[o,\zeta)_h$ as above, we have
      \[
        C^{-1} e^{-\veps h(x,o)} \le h_\veps(\zeta,\xi) \le C e^{-\veps h(x,o)}.
      \]
\end{enumerate}
\end{prop}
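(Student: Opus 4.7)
The plan is to adapt the Bonk--Heinonen--Koskela framework (which works for quasihyperbolizations of uniform spaces, cf.\ \cite[Chapters~2,3]{BHK-unif}) to the hyperbolic setting. The bridge between the two contexts is provided by two facts that are already available: globally $h\le 2k$ (so the identity $(\Om,k)\to(\Om,h)$ is $2$-Lipschitz), and the deeper result \cite[Theorem~A]{HB-geodesics} that hyperbolic and quasihyperbolic quasi-geodesics coincide as curves (with quantitative control of the constants). These two facts together let us push the relevant $k$-theorems to the $h$-side while keeping all constants under control.

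For (a), I would first invoke \cite[Theorems~1.11]{BHK-unif} (applied to the uniform space $(\Om,\lsig)$ and its quasihyperbolization) to conclude that $(\Om,\ksig)$ is Gromov hyperbolic, then use \rf{F:k BL kh} (or its spherical analog noted in \rf{ss:quasihyp metric}) to transfer this to $(\Om,k)$, and finally invoke \cite[Theorem~B]{HB-geodesics} to conclude $(\Om,h)$ is Gromov hyperbolic as well---since that theorem asserts exactly that these two metric spaces are simultaneously Gromov hyperbolic. Item (b) is a purely metric-geometric statement about $A$-uniform bounded spaces: the diameter-to-inradius ratio is controlled by $2A$, and a maximizer $o$ of $\sig$ plays the role of the ``center,'' with $\diam(\Om,\lsig)\le 2A\sig(o)$ a direct consequence of the uniform arc property. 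For (c), I combine two ingredients: by \cite[Remark~4.3]{DAH-hlexj} (or the quasihyperbolic analog of \cite[Proposition~2.20]{BHK-unif}), quasihyperbolic geodesics in $(\Om,\lsig)$ are $B'$-uniform arcs; since hyperbolic geodesics are quasihyperbolic quasi-geodesics by \cite[Theorem~A]{HB-geodesics}, a standard quasi-geodesic-stability argument in the uniform setting upgrades them to $B$-uniform arcs, with the endpoints extending continuously to $\bd(\Om,\lsig)$ because uniform arcs are chord-arc.

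For (d), the bijection $[\gam]\mapsto\gam(\infty)$ is well-defined because each hyperbolic geodesic ray is a $B$-uniform arc in $(\Om,\lsig)$ (by (c)) and hence has a unique $\lsig$-limit point in $\bd(\Om,\lsig)$; injectivity follows from $\tha$-thinness of triangles together with the fact that two uniform rays with the same $\lsig$-endpoint stay at bounded $h$-distance (use the double cone condition applied at interior points of the two rays and convert to $h$-distance via (\BPt)); surjectivity is immediate from (c) applied to pairs $(o,\xi)$. For (e), given the geodesic structure from (c), I would parametrize $[o,\zeta)_h$ by spherical arclength (valid since hyperbolic geodesics are rectifiable in $\lsig$) and use the uniform-arc double-cone condition to identify the point on $[o,\zeta)_h$ that is close (in $h$) to the geodesic $(\xi,\zeta)_h$; the case split in the formula for $x(\xi)$ corresponds to whether the Gromov product $(\xi|\zeta)_o$ is large or small, and the proof mirrors \cite[Prop.~7.12]{BHK-unif} with $k$ replaced by $h$. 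Finally, (f) is the standard consequence of (e): for any visual metric $h_\veps$, the Gromov product $(\xi|\zeta)_o$ is comparable to $h(o,x(\xi))$ up to an additive constant, so $h_\veps(\zeta,\xi)\eqx\exp(-\veps\, h(o,x))$.

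The main obstacle---and the reason we need the ``significant machinery'' alluded to in \rf{s:pfThmGromov}---is that the identity $(\Om,k)\to(\Om,h)$ is only Lipschitz in one direction, so we cannot simply copy-paste the BHK proofs after replacing $k$ by $h$. Concretely, the hyperbolic distance can be much smaller than the quasihyperbolic distance near boundary components that fail to be uniformly perfect, and this distortion threatens to inflate the Gromov-hyperbolicity, uniform-arc, and visual-metric constants. Circumventing this requires the quasi-geodesic equivalence of \cite[Theorem~A]{HB-geodesics} at every step, together with careful tracking of how the Beardon--Pommerenke estimates (\BPt) and \rf{F:BPEP} control hyperbolic length along quasihyperbolic quasi-geodesics. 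Once this is done, the constants $\tha,B,C$ depend only on the uniformity constant $A$ (through the standard data of the BHK theory) and on the absolute constants appearing in \rf{F:BPEP} and \cite[Theorem~A]{HB-geodesics}.
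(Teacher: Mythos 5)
Your overall strategy coincides with the paper's: start from the BHK theory for $(\Om,\ksig)$, transfer to $(\Om,k)$ via \rf{F:k BL kh}, and then to $(\Om,h)$ via \cite[Theorems~A and B]{HB-geodesics}, keeping the constants controlled because $h\le 2k$ and because hyperbolic and quasihyperbolic quasi-geodesics are the same curves. Items (a), (b), (c), (e), (f) of your proposal match the paper's argument in substance.

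The gap is in (d). You establish that each ray has a unique $\lsig$-endpoint, that rays with the same endpoint are equivalent (your ``injectivity''), and that surjectivity follows from (c) --- all of which the paper labels as routine. What you do \emph{not} address is the well-definedness of the map $[\gam]\mapsto\gam(\infty)$: if two hyperbolic geodesic rays are \emph{equivalent} (bounded $h$-Hausdorff distance), why must they share the same $\lsig$-endpoint? Equivalently, and this is how the paper phrases the crux, one must show that rays $\alf,\beta$ with \emph{distinct} $\lsig$-endpoints satisfy $\dist^h_{\mcH}(|\alf|,|\beta|)=+\infty$. This does not follow from thinness or the double-cone condition alone, precisely because near a non-uniformly-perfect boundary component $h$ can be much smaller than $k$, so bounded $h$-distance between corresponding points of the rays does not obviously prevent them from approaching different points of $\bd(\Om,\lsig)$. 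The delicate case is when the two $\lsig$-boundary points project to the \emph{same} point of $\hat\bd\Om$ under the map $\iota$. The paper's resolution uses uniformity to bound $\del(z_o)$ from below at the midpoint of the quasihyperbolic line $(\xi,\eta)_k$, observes that distinctness of $\xi,\eta$ in the length boundary forces $\bp\le\log10$ along the two tail subrays (there is ``plenty'' of boundary near the common projected point), and then applies (\BPt) to get $\lam\,ds\eqx\del^{-1}ds$ on those tails --- so their $h$-lengths grow like their $k$-lengths, and divergence follows. Your proposal gestures at ``use the double cone condition ... and convert to $h$-distance via (\BPt)'' but points it at the wrong direction (injectivity, which is routine) rather than at well-definedness, and omits the crucial lower bound on $\bp$ along the tails that makes the (\BPt) conversion yield a lower, not just upper, length comparison.
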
 
\begin{proof}%
There is no harm in rotating the sphere $\RS$, so we can assume that $\Om\subset\mfC$.
To see (a), we start with the fact (see \cite[Theorem~3.6]{BHK-unif}) that $(\Om,\ksig)$ is $\tha$-hyperbolic with $\tha=\tha(A)$, so by \rf{F:k BL kh} $(\Om,k)$ is $\tha$-hyperbolic with $\tha=\tha(A,\dist(0,\mfC\sm\Om))$.  Then
\cite[Theorem~B]{HB-geodesics} tells us that $(\Om,h)$ is also $\tha$-hyperbolic with $\tha=\tha(A,\dist(0,\mfC\sm\Om))$.

Part (b) is elementary.  For (c), we note that by \cite[Theorem~A]{HB-geodesics}, geodesic segments in $(\Om,h)$ are quasi-geodesics in $(\Om,k)$ (with an absolute constant) and hence by \rf{F:k BL kh} are also quasi-geodesics in $(\Om,\ksig)$ now with a constant that depends on $\dist(0,\mfC\sm\Om)$.  Finally, \cite[Theorem~4.1]{DAH-hlexj} affirms that these arcs are $B$-uniform with $B=B(A,d)$.  The assertions about geodesics rays and lines that end at boundary points follow in standard ways as explained in \cite[Proposition~3.12]{BHK-unif}.

Item (e)
can be established exactly as done in \cite[Lemma~3.14]{BHK-unif} for  quasihyperbolic distance.  Evidently, (f) follows from (e) and the standard estimates for visual distances given at the end of \rf{ss:GH&U}.

Item (d) follows mostly as in \cite[Proposition~3.12]{BHK-unif} with one major modification.  It is routine to see that each hyperbolic geodesic ray in $\Om$ has an endpoint in $\bd(\Om,\lsig)$, that rays with the same endpoint are equivalent, and that each boundary point is the endpoint of such a ray.  It remains to show that equivalent rays have the same endpoint.  Suppose $\alf$ and $\beta$ are hyperbolic geodesic rays in $\Om$ with $\xi:=\alf(\infty),\eta:=\beta(\infty)\in\bd(\Om,\lsig)$, and $\xi\ne\eta$.  We claim that $\dist_{\mcH}^h(|\alf|,|\beta|)=+\infty$ (so $\alf$ and $\beta$ are not equivalent).

This is not difficult to check when $\xi$ and $\eta$ correspond to\footnote{%
The identity map $(\Om,\lsig)\xra{\id}(\Om,\sig)$ is $1$-Lipschitz, so has a $1$-Lipschitz extension to a map $\overline{(\Om,\lsig)}\xra{\iota}(\hat{\Om},\sig)$ and the $\iota$ image of $\bd(\Om,\lsig)$ is precisely the set of rectifiably accessible boundary points of $(\Om,\sig)$; see \cite[Prop.~3.22]{DAH-diam-dist}.)
\label{fn:iota}}
different points in $\hat\bd\Om=\bd(\Om,\sig)$, but requires additional effort if these two length boundary points are attached to the \emph{same} spherical  boundary point, which we assume is the origin $0$.  Since we are ``near'' the origin, we can work with Euclidean quantities in place of spherical.

As in \cite[Proposition~3.12]{BHK-unif}, since $\xi\ne\eta$, there is a quasihyperbolic geodesic line $\gam=(\xi,\eta)_k$ in $\Om$.  Let $\Lam:=\ell(\gam)$ and let $z_o$ be the arclength midpoint of $\gam$.  Then
\[
  |z_o|\ge\del(z_o)\ge\frac{\Lam}{2B}\,.
\]
Put $L:=\min\{\Lam/10B,\ell(\alf),\ell(\beta)\}$.  Pick $s_a<s_o<s_b$ so that $z_o=\gam(s_o)$ and and so the quasihyperbolic subrays $\gam_a(s):=\gam(s_a-s), \gam_b(s):=\gam(s_b+s)$ (for $s\in[0,+\infty)$) of $\gam$ both have length
\[
  \ell(\gam_a)=L=\ell(\gam_b).
\]
By trimming the initial parts of $\alf,\beta$ (if necessary), we may assume they both have length $L$.  Let $a:=\gam(s_a),b:=\gam(s_b)$ be the initial points of $\gam_a,\gam_b$ respectively.

Note that $\gam_a,\gam_b$ are quasihyperbolic subrays of $\gam$ with $\gam_a(\infty)=\xi,\gam_b(\infty)=\eta$.  As $\gam$ is also a hyperbolic quasi-geodesic line in $\Om$ (by \cite[Theorem~A]{HB-geodesics}) and $\alf(\infty)=\gam_a(\infty),\beta(\infty)=\gam_b(\infty)$, there is a finite constant $H$ such that
\[
  \forall\;s\ge0\,, \quad h\bigl(\alf(s),\gam_a(s)\bigr) \le H \quad\text{and}\quad h\bigl(\beta(s),\gam_b(s)\bigr)\le H.
\]
\flag{give $H$?}

Since $\xi\ne\eta$, there is ``plenty'' of $\bOm$ ``near'' the origin.  In particular, it is not hard to check that $\bp\le\log10$ on $|\gam_a|\cup|\gam_b|$, so by (BP) $\lam\,ds$ and $\del^{-1}ds$ are \BL\ on $|\gam_a|\cup|\gam_b|$.  It follows that for all $s>|s_a|\vee|s_b|$,
\begin{gather*}
  \ell_h(\gam[-s,s_a]) \eqx \ell_k(\gam[-s,s_a]) = k\bigl(\gam(-s),a\bigr)
  \intertext{and}
  \ell_h(\gam[s_b,s]) \eqx \ell_k(\gam[s_b,s]) = k\bigl(\gam(s),b\bigr),
\end{gather*}
so $\ell_h(\gam[-s,s]) \gex k(\gam(-s),a) + k(\gam(s),b)$.
Finally, for all sufficiently large $s>0$,
\begin{align*}
  h\bigl(\alf(s),\beta(s)\bigr) &\ge h\bigl(\gam(-s),\gam(s)\bigr)-h\bigl(\alf(s),\gam(-s)\bigr)-h\bigl(\beta(s),\gam(s)\bigr) \\
  &\ge h\bigl(\gam(-s),\gam(s)\bigr) -2H  \gex \ell_h(\gam[-s,s]) -2H \\
  &\gex k\bigl(\gam(-s),a\bigr) + k\bigl(\gam(s),b\bigr) -2H  \to +\infty \;\text{(as $s\to\infty$)}.
\end{align*}
Thus $\alf$ and $\beta$ are indeed non-equivalent hyperbolic geodesic rays :-)
\end{proof}%

We require the following technical information.  The upshot of this is that, given two length boundary points, we can always find spherical boundary points at a distance comparable to the length distance between the two given length boundary points.  Here $\iota$ is as described in footnote \ref{fn:iota}. Also, we employ the ABC property for hyperbolic geodesics; see \rf{ss:ABC}.

\begin{lma} \label{L:Euc bdy pts} %
Let $\cc :=e^{-(\mu +\nu )}$ where $\mu ,\nu $ are ABC parameters for hyperbolic distance.\footnote{%
We take $\mu:=3\kk$ and $\nu:=5/2$; then $\mu>\nu$ and $\cc$ is an absolute constant.}
Let $\Om\subset\RS$ be a hyperbolic domain.  Suppose $\xi,\eta$ are distinct points in $\bd(\Om,\lsig)$.  Assume there is a hyperbolic geodesic line $\gam:=(\xi,\eta)_h$ that is also a $B$-uniform arc in $(\Om,\lsig)$.  Let $z_0$ be the $\sig$-arclength midpoint of $\gam$.  Put
$\xi_0:=\iota(\xi), \eta_0:=\iota(\eta)$, and $r_0:=\chi(z_0,\eta_0)$.  Then
\addtocounter{equation}{-1}  
\begin{subequations}\label{E:Euc bdy pts}
  \begin{gather}
    \text{either \; $\chi(\xi_0,\eta_0)\ge r_0$\,, \; or} \quad  \hat{\bd}\Om\cap\Set{z\in\RS| \cc r_0\le \chi(z,\eta_0)\le
    r_0}\ne\emptyset.            \label{E:A:Euc bdy pts}
    \intertext{Thus there exists a point $\xi_1\in\bOmh$ such that $\chi(\xi_1,\eta_0)\eqx\lsig(\xi,\eta)$; more
    precisely,}
    \frac{\cc}{\pi B}\,\lsig(\xi,\eta) \le \chi(\xi_1,\eta_0) \le \frac{B}2\, \lsig(\xi,\eta).   \label{E:B:Euc bdy pts}
  \end{gather}
\end{subequations}
\end{lma}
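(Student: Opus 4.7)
The plan is to reduce the problem to a Euclidean picture via a rotation of $\RS$: since $\chi$ is rotation-invariant and all hypotheses persist under such a rotation, we may assume $\eta_0=0\in\mfC$. With this choice, chordal disks $\{\chi(z,0)<s\}$ coincide with Euclidean disks $\{|z|<R_s\}$ where $R_s:=s/\sqrt{4-s^2}$, so the closed chordal annulus $\{\cc r_0 \le \chi(z,\eta_0) \le r_0\}$ is literally the concentric Euclidean annulus $\{R_{\cc r_0}\le|z|\le R_{r_0}\}$ centered at $0\in\mfC\sm\Om$. I will repeatedly use that $s\mapsto R_s/s = 1/\sqrt{4-s^2}$ is strictly increasing on $(0,2)$, which yields the key inequality $R_{\cc r_0}/\cc < R_{r_0}$.

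For \rf{E:A:Euc bdy pts} I would dichotomize on $\chi(\xi_0,\eta_0)$. If $\chi(\xi_0,\eta_0)\ge\cc r_0$ there is nothing to prove: either $\chi(\xi_0,\eta_0)\ge r_0$ (first alternative) or $\xi_0\in\bOmh$ already lies in the annulus. The remaining case $\chi(\xi_0,\eta_0)<\cc r_0$ I would handle by contradiction. Assume the open chordal annulus $A_0:=\{\cc r_0<\chi(z,0)<r_0\}$ is disjoint from $\bOmh$. Since $A_0$ is connected and the portion of $\gam$ just past $z_0$ on the $\eta$-side has $|\gam(t)|\in(R_{\cc r_0},R_{r_0})$ by the intermediate-value theorem, we must have $A_0\subset\Om$. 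Set $d:=R_{\cc r_0}e^\nu$. The estimate $R_{r_0}/R_{\cc r_0}=\cc^{-1}\sqrt{(4-\cc^2 r_0^2)/(4-r_0^2)}>e^{\mu+\nu}\ge e^{2\nu}$ (using $\mu\ge\nu$) shows $\A(0;d,\nu)\subset A_0\subset\Om$, and since $0\in\mfC\sm\Om$, $\A(0;d,\nu)\in\mcA_\Om$. Now $|\gam|$ drops from $R_{r_0}$ at $z_0$ toward $0$ on the $\eta$-side and toward $R_{\chi(\xi_0,0)}<R_{\cc r_0}<d$ on the $\xi$-side, so continuity produces $p,q\in|\gam|$ on opposite sides of $z_0$ with $|p|=|q|=d$. \rf{F:ABC info}(c) then forces $z_0\in|\gam[p,q]|\subset\A(0;d,\mu)$, so $|z_0|<d\,e^\mu = R_{\cc r_0}/\cc < R_{r_0}=|z_0|$, the desired contradiction.

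For \rf{E:B:Euc bdy pts} I translate the $B$-uniformity of $\gam$ in $(\Om,\lsig)$ into estimates on $r_0$. Writing $L:=\ell_\sig(\gam)$: quasiconvexity gives $L\le B\,\lsig(\xi,\eta)$; the double-cone property at the midpoint $z_0$ gives $L/2\le B\,\sig(z_0)\le B\,\sig(z_0,\eta_0)$; and trivially $L\ge\lsig(\xi,\eta)$. Combined with $\chi\le\sig\le(\pi/2)\chi$ these yield
\[
  \frac{\lsig(\xi,\eta)}{\pi B} \;\le\; \frac{2}{\pi}\,\sig(z_0,\eta_0) \;\le\; r_0 \;\le\; \sig(z_0,\eta_0) \;\le\; \frac{L}{2} \;\le\; \frac{B}{2}\,\lsig(\xi,\eta)\,.
\]
Finally I would set $\xi_1:=\xi_0$ if $\chi(\xi_0,\eta_0)\ge r_0$, and otherwise take $\xi_1$ to be a boundary point in the annulus from \rf{E:A:Euc bdy pts}. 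Either way $\chi(\xi_1,\eta_0)\ge\cc r_0$, and $\chi(\xi_1,\eta_0)\le\max\bigl(r_0,\sig(\xi_0,\eta_0)\bigr)\le\max\bigl(r_0,\lsig(\xi,\eta)\bigr)\le\tfrac{B}{2}\lsig(\xi,\eta)$ (enlarging $B$ to $\ge 2$ without loss of generality); combining with the displayed bounds on $r_0$ gives \rf{E:B:Euc bdy pts}. The main obstacle is the ABC step: the cushion $\cc=e^{-(\mu+\nu)}$ is calibrated precisely so that a Euclidean sub-annulus of modulus $2\nu$ fits inside the chordal annulus, while the ABC-enlarged annulus of modulus $2\mu$ still just fails to reach $z_0$; everything else reduces to standard uniform-arc estimates.
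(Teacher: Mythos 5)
Your proof is correct and follows essentially the same route as the paper: rotate so $\eta_0=0$, translate the chordal annulus into a Euclidean one via $s\mapsto s/\sqrt{4-s^2}$, derive a contradiction from the ABC property by fitting a $\nu$-annulus with center $0\in\mfC\sm\Om$ so that the $\mu$-fattened version still excludes $z_0$, and then extract \eqref{E:B:Euc bdy pts} from the $B$-uniformity of $\gam$ together with $\chi\le\sig\le(\pi/2)\chi$ and the $1$-Lipschitz property of $\iota$. The only (inconsequential) variation is where you anchor the $\nu$-annulus: you place it at radius $d=R_{\cc r_0}e^\nu$ inside the full Euclidean image of the chordal annulus, whereas the paper first passes to the thinner subannulus $\{\cc|z_0|<|z|<|z_0|\}$ of modulus exactly $\mu+\nu$ and centers the $\nu$-annulus at $e^{-\mu}|z_0|$.
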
 %
\begin{proof}%
We will see, after some normalization, that \eqref{E:A:Euc bdy pts} follows directly from the ABC property and then
\eqref{E:B:Euc bdy pts} is an easy consequence of uniformity.  To verify \eqref{E:A:Euc bdy pts}, assume $\chi(\xi_0,\eta_0)<\cc
r_0$.  By rotating $\RS$ if necessary, we may assume that $\eta_0=0\in\mfC$.

Note that
\begin{gather*}
  f(t):=\frac{t}{\sqrt{4-t^2}} \quad\text{has $f(\chi(z,0))=\abs{z}$},
  \intertext{so $f$ is increasing on $[0,2)$; also,}
  f^{-1}(s)=\frac{2s}{\sqrt{1+s^2}}.
  \intertext{Thus $f(r_0)=f(\chi(z_0,\eta_0))=\abs{z_0}$ and for any $r\in(0,r_0)$,}
  A:=\{z\mid r < \chi(z,\eta_0) < r_0\} = \{z\mid f(r) < \abs{z} < f(r_0)\}\,.
\end{gather*}

We set $r:=f^{-1}(\cc |z_0|)$, so: $f(r)=\cc \abs{z_0}, A=\{\cc|z_0|<|z|<|z_0|\}$, and  $\md(A)=\mu+\nu$.  Also, note that
\[
  r= f^{-1}(\cc |z_0|) = \frac{2\cc |z_0|}{\sqrt{1+(\cc |z_0|)^2}} > \frac{2\cc |z_0|}{\sqrt{1+|z_0|^2}} = \cc  r_0 >
  \chi(\xi_0,\eta_0)=f^{-1}(|\xi_0|)\,.
\]
So $|\xi_0|<\cc |z_0|$.  This gives $A\cap\bOm\ne\emptyset$, as we explain below, and then \eqref{E:A:Euc bdy pts} follows.

To this end, notice that as $|\xi_0|<\cc |z_0|$, there is a subarc $\alf$ of $\gam$ with $z_0\in|\alf|$ and $\bd\alf\subset\Sone(0;e^{-\mu }|z_0|)$.  Since $\mu>\nu$, $\A(0;e^{-\mu}|z_0|,\nu)\subset A$.  If $A\subset\Om$ were true, then by employing the fact that $\gam$ enjoys the $(\mu,\nu)$-ABC property (see \rf{ss:ABC}) we could assert that $|\alf|\subset\A(0;e^{-\mu},\mu)=A$; but, $z_0\in|\alf|$ and $z_0\notin A$. Therefore, $A\cap\bOm\ne\emptyset$, so \eqref{E:A:Euc bdy pts} holds.

\smallskip

Now we establish \eqref{E:B:Euc bdy pts}.  As $\gam$ is a $B$-uniform arc in $(\Om,\lsig)$,
\begin{gather*}
  \lsig(\xi,\eta) \le \ells(\gam) \le B \lsig(\xi,\eta).
  \intertext{Also,}
  r_0=\chi(z_0,\eta_0) \le \ells(\gam[z_0,\eta]) = \half\,\ells(\gam) \le \frac{B}2\, \lsig(\xi,\eta)
  \intertext{and}
  r_0\ge\chi(z_0)\ge\frac2{\pi}\,\sig(z_0)\ge\frac2{\pi B}\,\ells(\gam[z_0,\eta])\ge \frac1{\pi B}\,\lsig(\xi,\eta).
  \intertext{Thus if $\chi(\xi_0,\eta_0)\ge r_0$, then}
  \frac1{\pi B}\,\lsig(\xi,\eta) \le r_0 \le \chi(\xi_0,\eta_0) \le \sig(\xi_0,\eta_0) \le \lsig(\xi,\eta)
  \intertext{and \eqref{E:B:Euc bdy pts} holds with $\xi_1:=\xi_0$.  Suppose $\chi(\xi_0,\eta_0)< r_0$.  Then by \eqref{E:A:Euc
  bdy pts} there is a point $\xi_1\in\bOmh$ with}
  \frac{\cc}{\pi B}\, \lsig(\xi,\eta) \le cr_0 \le \chi(\xi_1,\eta_0) \le r_0 \le \frac{B}2\, \lsig(\xi,\eta)
\end{gather*}
as asserted in \eqref{E:B:Euc bdy pts}.
\end{proof}%

Armed with the notation and results from \rf{P:For unif=>QS} and \rf{L:Euc bdy pts}, we now establish \rf{T:unif=>QS}.

\begin{pf}{Proof of \rf{T:unif=>QS}} %
We may assume $\Om\subset\mfC$.  Then $(\Om,\ksig)\xra{\id}(\Om,k)$ is \BL\ as explained in \rf{F:k BL kh}.  Let $\cc:=e^{-(\mu+\nu)}$ be the constant in \rf{L:Euc bdy pts}.

As in \cite[Theorem~3.6]{BHK-unif}, we show that the bijection $\bd(\Om,\lsig)\to\bd_G(\Om,h)$ (given in \rf{P:For unif=>QS}(d)) is a \qsy; here we assume $h_\veps$ is a standard visual distance on $\bd_G(\Om,h)$ as in \rf{ss:GH&U} with visual parameter $\veps\in(0,\veps_0]$ and $o\in\Om$ is a fixed base point as given in \rf{P:For unif=>QS}(b).

Let $\zeta,\eta,\xi$ be points in $\bd(\Om,\lsig)$ and put $t:=\lsig(\zeta,\xi)/\lsig(\zeta,\eta)$.  When $t\ge1$, we can copy the Bonk-Heinonen-Koskela argument as it only uses upper estimates for quasihyperbolic distances.\footnote{%
Here we use the fact that $h\le2k\le8\ksig$.}
Thus we may, and do, assume that $t<1$.

Let $x=x(\xi),y=y(\eta)$ be the points on the hyperbolic geodesic ray $[o,\zeta)_h$ that are given by \rf{P:For unif=>QS}(e)
and associated with $\xi,\eta$ respectively.  Since $t<1$, we have $\zeta<x\le y\le o$ where the geodesic is ordered from $\zeta$ to $o$.  Then from \rf{P:For unif=>QS}(f) we find that
\begin{gather*}
  \frac{h_\veps(\zeta,\xi)}{h_\veps(\zeta,\eta)} \le C\, e^{-\veps h(x,y)} \le C.
  \intertext{It follows that for any fixed $t_0\in(0,1)$,}
  \forall\;t\in[t_0,1)\,,\quad
  \frac{h_\veps(\zeta,\xi)}{h_\veps(\zeta,\eta)} \le \frac{C}{t_0}\, t.
  \intertext{To finish the proof, we demonstrate that for all $0<t<t_0:=\cc/\pi B^2$, $h(x,y)\ge H(t)$ where $H(t)\to+\infty$ as $t\to0^{+}$.  This then gives}
  \frac{h_\veps(\zeta,\xi)}{h_\veps(\zeta,\eta)} \le C e^{-\veps H(t)}\to0\;\text{as $t\to0^{+}$}
\end{gather*}
which in turn confirms that the bijection $\bd(\Om,\lsig)\to\bd_G(\Om,h)$ is indeed \qsc.

Before immersing ourselves in the details, we explain the idea.  Whenever one knows three distinct boundary points, one has a standard lower bound for hyperbolic distance given by looking at the appropriate thrice punctured sphere; \rf{F:h in Ds&C01}(c) is handy for estimating hyperbolic distance in such a domain.  A difficulty here is that we have points in $\bd(\Om,\lsig)$ whereas we need points in $\hat{\bd}\Om$.  To overcome this, we appeal to \rf{L:Euc bdy pts}.

With this in mind, let $\zeta_0:=\iota(\zeta),\xi_0:=\iota(\xi), \eta_0:=\iota(\eta)$; see footnote \ref{fn:iota}.  By \rf{L:Euc bdy pts} there are points $\xi_1,\eta_1\in\bOmh$ with $\xi_1\ne\eta_1$ (when $t<t_0$) and such that
\begin{subequations}\label{E:Euc bdy pts in Pf}
\begin{gather}
  \frac{\cc}{\pi B}\,\lsig(\xi,\zeta) \le \chi(\xi_1,\zeta_0) \le \frac{B}2\, \lsig(\xi,\zeta)  \label{E:A:Euc bdy pts in Pf}
  \intertext{and}
  \frac{\cc}{\pi B}\,\lsig(\eta,\zeta) \le \chi(\eta_1,\zeta_0) \le \frac{B}2\, \lsig(\eta,\zeta).  \label{E:B:Euc bdy pts in Pf}
\end{gather}
\end{subequations}
Let $x':=T(x),y':=T(y)$ denote the images of $x,y$ (respectively) under the \MT\ $T$ that maps $\zeta_0,\xi_1,\eta_1$ to
$0,1,\infty$ respectively; so
\[
  T(z):=[z,\zeta_0,\eta_1,\xi_1]=\frac{(z-\zeta_0)(\eta_1-\xi_1)}{(z-\eta_1)(\zeta_0-\xi_1)}.
\]

Since $\Om\subset\Om_0:=\RS\sm\{\zeta_0,\xi_1,\eta_1\}$, writing $h_0:=h_{\Om_0}$, we now have
\begin{gather*}
  h(x,y)\ge h_0(x,y)=h_{01}(x',y')\ge h_{01}(-|x'|,-|y'|)\,.
  \intertext{To complete the proof, we demonstrate below that}
  \text{as}\;t\to0^+\,,\quad |x'|\eqx 1 \quad\text{and}\quad  |y'|\eqx\frac1t
\end{gather*}
which, in conjunction with \rf{F:h in Ds&C01}(c), provides the desired estimate.

First we show that $1\eqx\ds|x'|=|x,\zeta_0,\eta_1,\xi_1|=\frac{\chi(x,\zeta_0)\,\chi(\eta_1,\xi_1)}{\chi(x,\eta_1)\,\chi(\zeta_0,\xi_1)}$.  We
claim that
\begin{gather*}
  \frac{4}{\pi B^2}\, \chi(\xi_1,\zeta_0) \le \chi(x,\zeta_0) \le \frac{\pi B}{\cc}\, \chi(\xi_1,\zeta_0)\,,
    \quad\text{so}\quad
    \frac{\chi(x,\zeta_0)}{\chi(\zeta_0,\xi_1)} \le \frac{\pi B}{\cc}.
  \intertext{To see this, we use the definition of $x:=\gam\bigl(\lsig(\xi,\zeta)\bigr)$ along with \eqref{E:A:Euc bdy pts in Pf} to obtain}
  \chi(x,\zeta_0)\le\sig(x,\zeta_0)\le\ells(\gam[x,\zeta])=\lsig(\xi,\zeta)\le\frac{\pi B}{\cc}\,\chi(\xi_1,\zeta_0);
  \intertext{and also, as $\zeta_0\in\bOmh$ and $\gam$ is $B$-uniform in $(\Om,\lsig)$,}
  \frac{\pi}{2}\,\chi(x,\zeta_0) \ge \sig(x,\zeta_0) \ge \sig(x)
   \ge B^{-1}\ells(\gam[x,\zeta])=B^{-1}\lsig(\xi,\zeta) \ge
  \frac{2}{B^2}\,\chi(\xi_1,\zeta_0).
\end{gather*}

Next, continuing to use the inequalities \eqref{E:Euc bdy pts in Pf} we deduce that
\begin{gather*}
  \chi(\xi_1,\eta_1) \le \chi(\zeta_0,\xi_1)+\chi(\zeta_0,\eta_1) \le \frac{B}2\bigl(\lsig(\zeta,\xi)+\lsig(\zeta,\eta)\bigr) = \frac{B}2 (1+t) \lsig(\zeta,\eta) \le B\, \lsig(\zeta,\eta)
  \intertext{and, when $t\le t_0=\cc/\pi B^2$,}
  \chi(\xi_1,\eta_1) \ge \chi(\zeta_0,\eta_1)-\chi(\zeta_0,\xi_1) \ge \frac{\cc}{\pi B}\lsig(\zeta,\eta) -\frac{B}2\lsig(\zeta,\xi)
     = \Bigl(\frac{\cc}{\pi B} - \frac{B}2t\Bigr) \lsig(\zeta,\eta) \ge \frac{\cc}{2\pi B} \lsig(\zeta,\eta).
  \intertext{From the above inequalities we see that, when $t\le t_0$,}
  \frac{\cc}{2\pi B}\, \lsig(\zeta,\eta) \le \chi(\xi_1,\eta_1) \le B\, \lsig(\zeta,\eta).
\end{gather*}

Replacing $\xi_1$ with $x$ and repeating the argument directly above, we check that when $t\le t_1:=\cc/2\pi B$,
\begin{gather*}
  \frac{\cc}{2\pi B}\, \lsig(\zeta,\eta) \le \chi(x,\eta_1) \le B\, \lsig(\zeta,\eta).
  \intertext{Combining the above we now find that when $t\le t_0<t_1$,}
  \frac{\chi(\xi_1,\eta_1)}{\chi(x,\eta_1)} \le \frac{B\,\lsig(\zeta,\eta)}{(\cc/2\pi B)\lsig(\zeta,\eta)} = \frac{2\pi}{\cc}\,B^2.
  \intertext{Finally,}
  |x'| = \frac{\chi(x,\zeta_0)\,\chi(\eta_1,\xi_1)}{\chi(x,\eta_1)\,\chi(\zeta_0,\xi_1)}
       \le \frac{\pi B}{\cc}\,\frac{2\pi}{\cc}\,B^2 = \frac{2\pi^2}{\cc^2} B^3.
\end{gather*}

It remains to explain why $\ds\frac1t\eqx|y'|=\frac{\chi(y,\zeta_0)\,\chi(\eta_1,\xi_1)}{\chi(y,\eta_1)\,\chi(\zeta_0,\xi_1)}$.  From above, we already know that
\begin{gather*}
  \frac{\cc}{\pi B^2}\,\frac1t = \frac{(\cc/2\pi B)\lsig(\zeta,\eta)}{(B/2)\lsig(\xi,\zeta)}  \le
    \frac{\chi(\eta_1,\xi_1)}{\chi(\zeta_0,\xi_1)} \le \frac{B\, \lsig(\zeta,\eta)}{(\cc/\pi B)\lsig(\xi,\zeta)}
    = \frac{\pi B^2}{\cc}\, \frac1t
  \intertext{when $t\le t_0$.  Thus it suffices to demonstrate that $\chi(y,\zeta_0)\eqx\chi(y,\eta_1)$.  There are two cases depending on whether or not $y=o$.  When $y=o$ we find that}
  \frac1{\pi A} \le \frac{\chi(y,\zeta_0)}{\chi(y,\eta_1)} \le \pi A\,;
  \intertext{the diligent reader can confirm this with the help of \rf{P:For unif=>QS}(b).  We assume $y\ne o$, or equivalently, $\lsig(\zeta,\eta)\le\half\lsig(\eta,o)$.  Here $y=\gam\bigl(\lsig(\eta,\zeta)\bigr)$, so}
  \chi(y,\zeta_0) \le \lsig(y,\zeta) \le \ells(\gam[y,\zeta])= \lsig(\eta,\zeta) \quad\text{and}\quad
  \chi(y,\eta_1) \le \chi(y,\zeta_0)+\chi(\eta_1,\zeta_0)\le2\lsig(\eta,\zeta)
  \intertext{and then by uniformity}
  \chi(y,\zeta_0) \wedge \chi(y,\eta_1) \ge \chi(y) \ge \frac2{\pi}\,\sig(y) \ge \frac2{\pi B} \lsig(\eta,\zeta)
  \intertext{whence}
  \frac1{\pi B} \le \frac{\chi(y,\zeta_0)}{\chi(y,\eta_1)} \le \frac{\pi}2\,B.
\end{gather*}
\end{pf} %

\begin{pf}{Proof of \rf{TT:Gromov}} %
Since Euclidean translations are isometries of both $(\Om,h)$ and $(\Om,k)$, there is no harm in assuming that the origin lies in $\bOm$.  In this setting, $(\Om,k)$ and $(\Om,\ksig)$ are \BL\ equivalent with an absolute constant.  It now follows that $(\Om,h),(\Om,k)$ and $(\Om,\ksig)$ are all Gromov hyperbolic (or not, but we assume the former).

It is well known (see \cite[Theorem~1, p.211]{Goluzin-GFT} or \cite[Theorem~IX.22]{Tsuji-PotlThy}) that $\Om$ is conformally equivalent to a \emph{horizontal slit domain} $\Om'\subsetneq\RS$; thus $\infty\in\Om'$ and each boundary component of $\Om'$ is either a point or a compact horizontal line segment.  One can show that $(\Om',\lsig')$ is $C$-LLC$_2$ with $C:=\pi/\sig'(\infty)$.  Therefore by \cite[Prop.7.12]{BHK-unif}, $(\Om',\lsig)$ is uniform.

We are now positioned to apply \rf{T:unif=>QS} and its quasihyperbolic analog \cite[Theorem~3.6]{BHK-unif}.  These two results provide the first two QS equivalences $\cong$ below:
\[
  \bd_G(\Om,h) \equiv \bd_G(\Om',h') \cong \bd(\Om,\lsig) \cong \bd_G(\Om',\ksig') \cong \bd_G(\Om,k);
\]
the isometric equivalence $\equiv$ holds because conformal maps are hyperbolic isometries, and the last QS equivalence $\cong$ holds because $(\Om',\ksig')$ and $(\Om,k)$ are \BL\ equivalent thanks to \rf{F:GO-k-RQI}.
\end{pf} %


\subsection{Proof of Theorem~\ref{TT:isolated}}  \label{s:pfThmIsolated} %
Below, in \rf{ss:pf UP+S}, we establish the following general result; this provides a large class of plane domains whose hyperbolizations and quasihyperbolizations are \qic ally equivalent.  In particular, each finitely connected domain belongs to this class, thus corroborating \rf{TT:isolated}.  However, this class also includes many infinitely connected domains such as $\mfC\sm\mfZ$ or $\Cstar\sm\{\frac1n\mid n\in\mfN\}$.

\begin{thm} \label{T:UP+S} %
Let $\Pi\subset\Om\subset\RS$ be as described in \rf{ss:NKA} with \eqref{E:ass4UP+S} holding.  In addition, suppose that
\[
  \RS\sm\Om_\Del = (\RS\sm\Om)\cup\Del \quad\text{is $M$-uniformly perfect.}
\]
Then $(\Om_\Pi,h_\Pi)$ and $(\Om_\Pi,k_\Pi)$ are $(L,6\pi)$-\qic ally equivalent where $L=L(M)$.
\end{thm}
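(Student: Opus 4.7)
The strategy is to transfer the Beardon--Pommerenke bi-Lipschitz equivalence from the enlarged, uniformly perfect domain $\Om_\Del$ back to the truncated region $\Om_\Pi$, paying only a controlled additive cost. Since $\RS\sm\Om_\Del$ is $M$-uniformly perfect, \rf{TT:QSequiv} immediately provides a constant $L_0=L_0(M)$ such that the identity $(\Om_\Del,k_{\Om_\Del})\to(\Om_\Del,h_{\Om_\Del})$ is $L_0$-bi-Lipschitz. This step is the only one requiring uniform perfectness and is what furnishes the multiplicative constant $L$ in the conclusion.

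First I would compare $k_\Om$ with $k_{\Om_\Del}$ (and $h_\Om$ with $h_{\Om_\Del}$) on $\Om_\Pi$. Domain monotonicity for the hyperbolic metric and the definition of the quasihyperbolic metric give one direction for free: $h_{\Om_\Del}\le h_\Om$, and a corresponding (pointwise-density) comparison for $k$. For the reverse direction I would use the structural assumption \eqref{E:ass4UP+S}: the set $\Pi$ is a union of Beardon--Pommerenke style neighborhoods around the extra boundary components collected by $\Del$, so each component of $\Pi$ is a ``fat'' annulus around a piece of $\RS\sm\Om_\Del$ that lies in $\Om$ but has been filled in by $\Del$. Any $h_{\Om_\Del}$- or $k_{\Om_\Del}$-geodesic between two points of $\Om_\Pi$ can then be re-routed inside $\Om_\Pi$ by replacing each chord across a component of $\Pi$ by an arc along the corresponding boundary circle. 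The circumferential estimate in \eqref{E:k* ests} (together with \rf{L:punx disks in Cstar} for the asymptotics near each Del-puncture) bounds each such detour by a universal multiple of $\pi$. Crucially, by the ABC property for hyperbolic and quasihyperbolic geodesics (\rf{F:ABC info}), any fixed geodesic meets each $\Pi$-component in at most one subarc, so the detours along a single geodesic line do not accumulate multiplicatively---the accumulated error per traversal is at most $3\pi$.

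Putting these pieces together, the identity map $(\Om_\Pi,k_\Pi)\to(\Om_\Pi,h_\Pi)$ factors (up to rough inverses in the sense of \rf{X:QIEqX}) through the $L_0$-bi-Lipschitz identity on $\Om_\Del$, picking up at most $3\pi$ going from $k_\Pi$ to (the restriction of) $k_{\Om_\Del}$ and another $3\pi$ going from $h_{\Om_\Del}$ back to $h_\Pi$, for a total additive rough constant of $6\pi$. This yields the claimed $(L,6\pi)$-quasiisometric equivalence.

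The main obstacle is the uniform control of the detour cost across the possibly infinite collection of $\Pi$-components. The comparison on $\Om_\Del$ is clean, but verifying that each re-routing changes the length by at most a universal constant (rather than a constant that degrades as the components of $\Pi$ become smaller or more numerous) is the heart of the argument. This requires the ``fatness'' in the definition of $\Pi$ given by \eqref{E:ass4UP+S} together with the one-crossing constraint coming from the ABC property; without the latter a single geodesic could enter and exit the same $\Pi$-component many times, and the $6\pi$ bound would fail.
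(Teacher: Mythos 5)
The proposal contains a fatal structural error: you claim that the \emph{identity map} $(\Om_\Pi,k_\Pi)\to(\Om_\Pi,h_\Pi)$ is the desired $(L,6\pi)$-quasiisometric equivalence, but this is false whenever $\Pi$ is nonempty, even in the simplest case.  Take $\Om=\mfD$, $\Pi=\{0\}$, $r_0\le\tfrac12$, so $\Om_\Pi=\mfD_\star$ and $\RS\sm\Om_\Del=\D[0;r_0]\cup(\RS\sm\mfD)$ is certainly uniformly perfect.  For $a=e^{-n^2},\,b=e^{-n}$ one has $k_\star(a,b)\eqx n^2$ while (by \rf{F:h in Ds&C01}(a) and (c)) $h_\star(a,b)\eqx\log n$, so $k_\star$ is not bounded above by any affine function of $h_\star$ along this sequence.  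No amount of re-routing around the circles $\bd\Del_p$ or appeal to the ABC property can repair this, because the failure is intrinsic to the punctured disk $\Delstar_p$: the identity simply is not a quasiisometry there.  Relatedly, the set $\Om_\Del$ is not cobounded in $(\Om_\Pi,k_\Pi)$ (or in $(\Om_\Pi,h_\Pi)$), since each $\Delstar_p$ has infinite quasihyperbolic diameter, so a comparison on $\Om_\Del$ cannot be globalized to $\Om_\Pi$ by your factorization.

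The missing ingredient is precisely the collection of nontrivial maps $\Phi_p$ of \rf{C:PunxDisks}, which are iterated-logarithm remappings of each $\Delstar_p$, roughly $\Phi_p(z)=p+r_p\,\dfrac{\xi-p}{|\xi-p|}\cdot\dfrac{\log\bigl(r_p/|\xi-p|\bigr)}{\log\bigl(|z-p|/|\xi-p|\bigr)}$; these change the $k$-scale into the $h$-scale on each punctured disk.  The paper's map is $\Phi:=\Phi_p$ on $\Delstar_p$ and $\Phi:=\id$ on $\Om_\Del$.  Only then does the BL comparison on the ``middle'' region enter: and there too the paper does something different from your step one---it does not compare $k_{\Om_\Del}$ with $h_{\Om_\Del}$ (the intrinsic metrics of $\Om_\Del$), but rather bounds the Beardon--Pommerenke function $\bp_\Pi$ of $\Om_\Pi$ itself on the slightly enlarged region $\Om_{\tilde\Del}$, so that $\lam_\Pi\,ds$ and $\del_\Pi^{-1}ds$ are directly BL-comparable there, and then invokes the ABC property to confine hyperbolic and quasihyperbolic $\Om_\Pi$-geodesics with endpoints in $\Om_\Del$ to $\Om_{\tilde\Del}$.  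A second, more local problem with your re-routing as written: a detour along $\bd\Del_p$ lies on $\bd\Om_\Del$ and hence has infinite $k_{\Om_\Del}$- and $h_{\Om_\Del}$-length; one must re-route strictly outside $\Del_p$, which is fixable, but the deeper issue above is not.
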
 

\flag{It would be nice to have useful, easily verifiable, conditions that guarantee that $\RS\sm\Om_\Del$ is uniformly perfect!  For example, assume $\Om\subsetneq\mfC$ and $r_p:=\half\del(p)$.}

\subsubsection{Notation and Key Assumptions}    \label{ss:NKA} %
Let $\Pi\subset\Om\subset\RS$ and put $\Om_\Pi:=\Om\sm\Pi$.  Here $\Om$ can be $\mfC$ or even $\RS$, but we assume that $\Pi$ is closed in $\Om$, that $\Om_\Pi\subset\mfC$, and that \emph{$\Om_\Pi$ is hyperbolic}; in particular, the point at infinity belongs to $\Om$ \ifff it belongs to $\Pi$.  For convenience, we set $\Pi^\star:=\Pi\sm\{\infty\}$.

We assume that for each $p\in\Pi$ there is an associated $r_p>0$ with the properties described below.  For $p\ne\infty$, we set
\[
  \Del_p:=\D[p;r_p] \quad\text{and}\quad \Delstar_p:=\Del_p\sm\{p\}\,,
\]
and we assume that
\begin{subequations}\label{E:ass4UP+S}
 \begin{align}
   &2 r_p \le \del(p)
     \quad\text{(so $\D(p;2r_p)\subset\Om$)}
     \label{E:1:a4UPS}
   \intertext{and that for points $p\ne q$ in $\Pi^\star$,}
   &2(r_p+r_q) \le |p-q|
     \quad\text{(so $\D(p;2r_p)\cap\D(q;2r_q)=\emptyset$).}  \label{E:2:a4UPS}
   \intertext{To handle finitely connected domains, e.g., $\mfD^\star$ or $\Coo$, we must allow $\infty\in\Pi$ in which case we further assume that}
   &\RS\sm\Om \subset \D[0;\tfrac14 r_\infty]
     \quad\text{and that}\quad \Del_p\subset\D[0;\tfrac14 r_\infty]
       \quad\text{for each $p\in\Pi^\star$}\,.
     \label{E:3:a4UPS}
 \end{align}
\end{subequations}
When $\infty\in\Pi$, we set
\begin{gather*}
  \Del_\infty:=\RS\sm\D(0;r_\infty) \quad\text{and}\quad \Delstar_\infty:=\Del_\infty\sm\{\infty\}\,.
  \intertext{Next, define}
  \Del:=\bigcup_{p\in\Pi}\Del_p \quad\text{and}\quad \Om_\Del:=\Om\sm\Del\,.
\end{gather*}
In the above, $\del=\del_\Om$ is the Euclidean distance to $\bOm$ which is infinite if $\Om\supset\mfC$.

The hypotheses above ensure that $\Pi$ is discrete in $\Om$, so $\Om_\Pi$ is a domain.  Similarly, $\Del$ is closed in $\Om$ and $\Om_\Del$ is a domain.  We use the subscripts $\Pi$ and $\Del$ to denote quantities associated with $\Om_\Pi$ and $\Om_\Del$.  For example, $h_\Pi=h_{\Om_\Pi}$ and $k_\Pi=k_{\Om_\Pi}$ are the hyperbolic and quasihyperbolic distances in $\Om_\Pi$ (respectively), and especially $\bp_\Pi=\bp_{\Om_\Pi}$.

\subsubsection{Quasiisometric Equivalence of Punctured Disks}    \label{ss:QIEpxD} %
Here we present some technical details that allow us to streamline our proof of \rf{T:UP+S} and focus on the underlying ideas.  Roughly, we show that whenever $\Del_*$ is a punctured disk in a hyperbolic plane domain $\Om$ (with the puncture a point of $\RS\sm\Om$), then $(\Del_*,h)$ and $(\Del_*,k)$ are \qic ally equivalent.

We use the following numerical result whose proof is left for the diligent reader.
\begin{equation}\label{E:QIEpxD}
  \forall\; K>0,L>0,x>0,y\ge L\,: \quad
         \frac{K+x}{K+y}\ge\frac{L}{K+L}\frac{x}{y}\,.
\end{equation}

We require a result similar to \rf{L:punx disks in Cstar} but for punctured disks in arbitrary domains; here there are separate cases for finite versus infinite punctures.  Since Euclidean similarity transformations are quasihyperbolic isometries, we can normalize as in the following.

\flag{We don't use second part, but it's good motivation for proof of \rf{P:QI equiv punx disks}!}

\begin{lma} \label{L:nmlzd punx disks} %
Suppose $\mfD\subset\Om\subset\mfC$ with $1\in\mfC\sm\Om$.  Let $\Om_*:=\Om\sm\{0\}$ and $k_*:=k_{\Om_*}$.  For each $r\in(0,\half]$, $(\Del_*,k_*)$ is $\pi$-roughly isometrically equivalent to the infinite ray $\bigl([0,+\infty),\ed\bigr)$, where $\Del_*:=\D[0;r]\sm\{0\}$.

Next, suppose $0\in\RS\sm\Om\subset\cmfD$.  Let $\Om^*:=\Om\sm\{\infty\}$ and $k^*:=k_{\Om^*}$.  For each $R\ge2$, $(\Del^*,k^*)$ is $(2,\pi)$-\qic ally equivalent to the infinite ray $\bigl([0,+\infty),\ed\bigr)$, where  $\Del^{\!*}:=\mfC\sm\D(0;R)$.
\end{lma}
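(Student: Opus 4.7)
The plan is to reduce both assertions to Lemma~\ref{L:punx disks in Cstar}, by comparing the quasihyperbolic metric of the given domain with $k_\star$ on the relevant punctured disk, and exploiting the fact that $k_\star$-geodesics in $\Cstar$ are logarithmic spirals whose modulus is a geometric mean of the endpoint moduli.

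For the first assertion, I would first verify that $k_*$ \emph{coincides} with $k_\star$ on $\Del_*\times\Del_*$, after which Lemma~\ref{L:punx disks in Cstar} applies verbatim. For $z\in\Del_*$ we have $|z|\le r\le\half$, and since $\mfD\subset\Om$, $\del_\Om(z)\ge1-|z|\ge\half\ge|z|$; thus $\del_{\Om_*}(z)=|z|=\del_\star(z)$, so the two metric densities agree on $\Del_*$. For $a,b\in\Del_*$, the $k_\star$-geodesic $\gam$ in $\Cstar$ is a logarithmic spiral with $|\gam(t)|=|a|^{1-t}|b|^t\in(0,r]$, hence $|\gam|\subset\Del_*\subset\Om_*$; integrating gives $k_*(a,b)\le\ell_{k_*}(\gam)=\ell_{k_\star}(\gam)=k_\star(a,b)$, and the reverse inequality follows from $\Om_*\subset\Cstar$.

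For the second assertion, we obtain only a bi-Lipschitz comparison, which is responsible for the factor $2$. For $z\in\Del^*$ with $|z|\ge R\ge2$, the hypothesis $\RS\sm\Om\subset\cmfD$ forces every finite boundary point of $\Om^*$ to lie in $\cmfD$, so $|z|-1\le\del_{\Om^*}(z)\le|z|$; combined with $|z|-1\ge|z|/2$, this gives $\half\del_\star\le\del_{\Om^*}\le\del_\star$ on $\Del^*$. The log-spiral $k_\star$-geodesic between $a,b\in\Del^*$ has modulus at least $\min\{|a|,|b|\}\ge R$, so it stays inside $\Del^*$, and integrating along it yields $k^*(a,b)\le 2k_\star(a,b)$, while $k^*\ge k_\star$ is automatic. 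Composition with $\vphi(z):=\log(|z|/R)$---which by the exterior-disk case of Lemma~\ref{L:punx disks in Cstar} is a surjective $(1,\pi)$-QI equivalence $(\Del^*,k_\star)\to([0,\infty),\ed)$---then promotes this to a surjective $(2,\pi)$-QI equivalence, since $\half k^*-\pi\le|\vphi(z)-\vphi(w)|\le k^*+\pi$ follows directly from $k_\star\le k^*\le 2k_\star$.

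The only delicate point is the monotonicity of modulus along a $k_\star$-geodesic in $\Cstar$, which confines the geodesic to the given punctured disk; this is already implicit in Lemma~\ref{L:punx disks in Cstar}. Everything else is routine bookkeeping, and I anticipate no serious obstacle.
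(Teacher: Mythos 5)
Your argument is correct and follows essentially the same route as the paper. For the first assertion you establish $\del_{\Om_*}=\del_\star$ on $\Del_*$ and observe that the $k_\star$-geodesic (log spiral) between two points of $\Del_*$ stays in $\Del_*$, so $k_*$ and $k_\star$ agree there; the paper says exactly this, more tersely, by declaring the identity map on $\{0<|z|<\tfrac12\}$ a $k_*$/$k_\star$ isometry and invoking \rf{L:punx disks in Cstar}. For the second assertion the paper's \emph{primary} argument is the one-line reduction to the first case via the inversion $z\mapsto z^{-1}$, which is quasihyperbolically $2$-bi-Lipschitz by \rf{F:kQI}; your density comparison $\tfrac12|z|\le\del_{\Om^*}(z)\le|z|=\del_\star(z)$ for $|z|\ge2$ is precisely the ``alternative'' the paper mentions in the same breath, and your bookkeeping through $\vphi(z)=\log(|z|/R)$ correctly yields the advertised $(2,\pi)$-QI equivalence. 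One cosmetic remark: in the paper's proof of \rf{L:punx disks in Cstar} the map for the punctured disk should read $\vphi(z)=\log(r/|z|)$ (the sign is flipped there), but for the exterior-disk case your $\vphi(z)=\log(|z|/R)$ is the correct choice.
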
 %
\begin{proof}%
In $\{0<|z|<\half\}\supset\Del_*$, $\del_*(z)=|z|=\del_\star(z)$, so the identity map
\[
  (\Om_*,k_*)\supset(\{0<|z|<1/2\},k_*)\xra{\id}(\{0<|z|<1/2\},k_\star) \subset(\Cstar,k_\star)
\]
is an isometric equivalence and thus \rf{L:punx disks in Cstar} gives the first assertion.
The second assertion holds because $z\mapsto z^{-1}$ is quasihyperbolically $2$-\BL, as per \rf{F:kQI}.  Alternatively, it is not hard to check that for $|z|\ge2$, $\half|z|\le\del^*(z)\le|z|=\del_\star(z)$.
\end{proof}%

We conclude this subsubsection with a technical result that we employ in our proof of \rf{T:UP+S}.  As above there are separate cases for finite versus infinite punctures.

\begin{prop} \label{P:QI equiv punx disks} %
Suppose $\mfD\subset\Om\subset\mfC$ with $1\in\mfC\sm\Om$.  Assume $\Om_*:=\Om\sm\{0\}$ is hyperbolic and let $h_*:=h_{\Om_*},k_*:=k_{\Om_*}$.  For each $r\in(0,\half]$, $(\Del_*,h_*)$ is $C_1$-roughly isometrically equivalent to $(\Del_*,k_*)$, where $\Del_*:=\D[0;r]\sm\{0\}$ and $C_1:=\pi/\log2\le5$.

Next, suppose $1\in\RS\sm\Om\subset\cmfD$.  Assume $\Om^*:=\Om\sm\{\infty\}$ is hyperbolic and let $h^*:=h_{\Om^*}, k^*:=k_{\Om^*}$.  For each $R\ge4$, $(\Del^{\!*},h^*)$ is $(2,2\pi)$-\qic ally equivalent to $(\Del^{\!*},k^*)$, where  $\Del^{\!*}:=\mfC\sm\D(0;R)$.
\end{prop}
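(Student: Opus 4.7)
The plan for part (1) is to construct an explicit radial reparametrization $\Phi: \Del_* \to \Del_*$ realizing the rough isometric equivalence; for part (2) I would reduce to part (1) via the inversion $T(z) := 1/z$.

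For part (1), the guiding heuristic is that near the puncture $0$, quasihyperbolic distance grows like $\log(1/|z|)$ while hyperbolic distance grows like $\log\log(1/|z|)$. I would therefore define a radial bijection $\Phi(z) := s(|z|)\cdot z/|z|$ with $s(t) := r\log(1/r)/\log(1/t)$, so that $\log(r/|\Phi(z)|) = \log(\log(1/|z|)/\log(1/r))$ matches the expected hyperbolic radial coordinate. The heart of the argument rests on four estimates: first, $\del_{\Om_*}(z)=|z|$ on $\Del_*$ (since $\mfD\subset\Om$ and $|z|\le 1/2$ force $\del_\Om(z)\ge 1/2$), so that $k_*(\Phi(a),\Phi(b))$ decomposes as a radial contribution $|\log(|\Phi(a)|/|\Phi(b)|)|$ plus an angular contribution bounded by $\pi$ (the quasihyperbolic diameter of any circle centered at $0$); second, by domain monotonicity along $\mfD_\star\subset\Om_*\subset\Coo$ (using $1\in\mfC\sm\Om$), \rf{F:h in Ds&C01}(a) gives the upper bound $h_*(a,b)\le|\log(\log(1/|a|)/\log(1/|b|))|+\pi/\log 2$; third, \rf{F:h in Ds&C01}(c) supplies the matching lower bound $h_*(a,b)\ge|\log((\kk+\log(1/|a|))/(\kk+\log(1/|b|)))|$; and fourth, the numerical inequality \eqref{E:QIEpxD} applied with $K=\kk$ and $L=\log(1/r)\ge\log 2$ shows that the logarithmic expressions in the second and third estimates differ by at most the absolute constant $\log(1+\kk/\log 2)$. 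Assembling these with the radial/angular split of $k_*(\Phi(a),\Phi(b))$ yields the rough isometry bound $|k_*(\Phi(a),\Phi(b))-h_*(a,b)|\le C_1$.

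For part (2), I set $T(z):=1/z$ and $\Om_1:=T(\Om^*)\cup\{0\}$. The hypothesis $\RS\sm\Om\subset\cmfD$ gives $\{|z|>1\}\subset\Om^*$, so $T(\{|z|>1\})=\mfD_\star\subset T(\Om^*)$ and hence $\mfD\subset\Om_1$; the hypothesis $1\in\mfC\sm\Om$ gives $1\notin\Om_1$. Thus $\Om_1$ satisfies the hypotheses of part (1). Moreover $T(\Del^{\!*})=\D[0;1/R]\sm\{0\}$ with $1/R\le 1/4\le 1/2$, so part (1) applies directly to the punctured disk $T(\Del^{\!*})\subset T(\Om^*)$. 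Since $T$ is a M\"obius transformation it is a hyperbolic isometry between $(\Om^*,h^*)$ and $(T(\Om^*),h_{\Om_1\sm\{0\}})$; by \rf{F:kQI} it is also quasihyperbolically $2$-bi-Lipschitz. Composing the isometry $T$, the $(1,C_1)$-rough isometry from part (1) applied on $T(\Del^{\!*})$, and the $2$-bi-Lipschitz map $T^{-1}$ yields a $(2,2\pi)$-QI equivalence between $(\Del^{\!*},h^*)$ and $(\Del^{\!*},k^*)$ after consolidating the additive error constants.

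The main obstacle is the careful bookkeeping in part (1) needed to extract the specific constant $C_1=\pi/\log 2$: the upper and lower hyperbolic bounds come from two different containing domains ($\mfD_\star$ and $\Coo$) and involve slightly different logarithmic expressions, so one must verify that the errors from \eqref{E:QIEpxD} and the angular correction telescope correctly rather than simply accumulate. Modulo this sharp-constant issue, the strategy above produces a $(1,C)$-rough isometry for some absolute $C$, and this feeds cleanly into part (2) via the inversion reduction, with the factor of $2$ in the final $(2,2\pi)$ appearing precisely because $T$ fails to be a quasihyperbolic isometry.
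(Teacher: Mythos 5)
Your part (1) is essentially the paper's argument with a cosmetic change: the paper's $\Phi$ factors as $\vth\comp\psi$ with $\vth(s)=re^{-s}$, so $\Phi$ maps $\Del_*$ onto the radial segment $(0,r]$, whereas your $\Phi(z)=s(|z|)\,z/|z|$ retains the argument of $z$. Since $\vth$ embeds $[0,\infty)$ isometrically (not just roughly), the paper gets $k_*(\Phi a,\Phi b)=|\psi(a)-\psi(b)|$ exactly, with no angular error to absorb; your version pays an extra additive $\pi$ from $|\Arg(b/a)|$ on top of the $\pi/\log2$ coming from the $h_*$--$\psi$ comparison, so you land near $\log(1+\kk/\log2)+\pi\approx5.1$ rather than $\pi/\log2\approx4.5$. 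You flag the sharp-constant issue yourself, so this is a minor discrepancy rather than an error, but the remedy is precisely the paper's collapse of the angular coordinate.

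Part (2) has a genuine gap. You take $T(z)=1/z$ and set $\Om_1:=T(\Om^*)\cup\{0\}$, then assert ``$\Om_1$ satisfies the hypotheses of part (1).'' But part (1) requires $\Om_1\subset\mfC$, and you never check it. The hypothesis $1\in\RS\sm\Om\subset\cmfD$ says nothing about whether $0\in\Om$: if $0\in\Om$ (as in, say, $\Om=\RS\sm\{1,-1\}$), then $\infty=T(0)\in T(\Om^*)$, so $\Om_1\not\subset\mfC$, the Euclidean quasihyperbolic metric $k_{\Om_1\sm\{0\}}$ is undefined, and \rf{F:kQI} (which also requires both domains to lie in $\mfC$) cannot be invoked. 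The paper states this obstruction explicitly -- ``we do not know whether the origin lies in $\Om$ or in $\mfC\sm\Om$'' -- and that is why it does \emph{not} reduce part (2) to part (1) via inversion. Instead it runs a direct argument: it builds the $(2,2\pi)$-embedding $\vth(s):=1-R_1e^{s}$ of $[0,\infty)$ into $(\Del^{\!*},k^*)$ by hand; normalizes with the \emph{affine} M\"obius map $T(z):=(z-1)/(\xi-1)$ (where $\xi$ is a farthest boundary point from $1$), which fixes $\infty$ and so keeps everything in $\mfC$; gets the sandwich $\mfD^\star\subset\Om'\subset\Coo$ to estimate $h'$; and needs one more genuine step (via \rf{X:QIEqX}) to pass from $\Del^{\!*}$ to the shifted exterior disk $\Del_1^{\!*}=\RS\sm\D(1;R_1)$, because after translation the region no longer matches the model used for $\vth$. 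None of this is needed in your reduction, which is why the missing case is easy to overlook -- but it is exactly the case the paper is at pains to handle. As a smaller remark, even where your composition is defined it yields additive constant $2C_1\approx9.1$, not $2\pi$; the paper's direct construction is what produces the claimed $(2,2\pi)$.
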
 %
\begin{proof}%
First, suppose $\mfD\subset\Om\subset\mfC$ with $1\in\mfC\sm\Om$ and $\Om_*=\Om\sm\{0\}$ hyperbolic.  Fix $r\in(0,\half]$ and let $\Del_*:=\D[0;r]\sm\{0\}$.  As in \rf{L:nmlzd punx disks},
\[
  \bigl([0,+\infty),\ed\bigr)\xra{\vth}(\Del_*,k_*)\subset(\Om_*,k_*)\,, \quad \vth(s):=re^{-s}
\]
is an isometric embedding and a $\pi$-rough isometric equivalence.

We show that the map
\begin{gather*}
  (\Del_*,h_*)\xra{\psi}\bigl([0,+\infty),\ed\bigr)\quad\text{given by}\quad  \psi(z):=\log\frac{\log(1/|z|)}{\log(1/r)}
  \intertext{is a surjective rough isometric equivalence.  The inclusions $\Del_*\subset\mfD_\star\subset\Om_*\subset\Coo$ tell us that}
  \text{in $\mfD_\star$}\,,\quad h_\star\ge h_*\ge h_{01}
  \quad\text{(here $h_\star=h_{\mfD_\star}$ and $h_{01}=h_{\Coo}$)}\,. 
  \intertext{Employing the estimates in \rfs{F:h in Ds&C01} we deduce that for all $0<|a|\le|b|\le\half$:}
  h_*(a,b) \le h_\star(a,b) \le \log\frac{\log(1/|a|)}{\log(1/|b|)} + \frac{\pi}{\log2}
  \intertext{and}
  h_*(a,b) \ge h_{01}(a,b) \ge \log\frac{\kk+\log(1/|a|)}{\kk+\log(1/|b|)} \ge
  \log\biggl(\frac{\log2}{\kk+\log}\frac{\log(1/|a|)}{\log(1/|b|)}\biggr)\,;
  \intertext{here \eqref{E:QIEpxD} provides the last inequality.  Thus for all $a,b\in\Del_*$,}
  h_*(a,b)-\frac{\pi}{\log2} \le \abs{\psi(a)-\psi(b)} \le h_*(a,b) + \log\Bigl(1+\frac{\kk}{\log2}\Bigr)
\end{gather*}
and so $\psi$ is indeed a surjective $(1,C_1)$-QI equivalence.

It is now not difficult to confirm that the map
\[
  (\Om_*,h_*)\supset(\Del_*,h_*)\xra{\Phi:=\vth\comp\psi}(\Del_*,k_*)\subset(\Om_*,k_*)\,,
  \quad\Phi(z):=\vth\bigl(\psi(z)\bigr)=re^{-\psi(z)}=\dfrac{r\log(1/r)}{\log(1/|z|)}
\]
is a $C_1$-rough isometric equivalence.

\medskip

Next, we examine an infinite puncture.  Suppose $1\in\RS\sm\Om\subset\cmfD$ with $\Om^*=\Om\sm\{\infty\}$ hyperbolic.  Fix $R\ge4$ and let $\Del^{\!*}:=\mfC\sm\D(0;R)$.  Here we cannot argue as in the second part of the proof of \rf{L:nmlzd punx disks} because we do not know whether the origin lies in $\Om$ or in $\mfC\sm\Om$.  Also, unlike the first part above, where we had the three boundary points $0,1,\infty$, here we only know two boundary points.  Nonetheless, both $(\Del^{\!*},h^*)$ and $(\Del^{\!*},k^*)$ are QI equivalent to the infinite ray $\bigl([0,+\infty),\ed\bigr)$ as we now corroborate.

Put $R_1:=R+1$, $\Del_1:=\RS\sm\D(1;R_1)$, and $\Del_1^{\!*}:=\Del_1\sm\{\infty\}$.  One can appeal to \rf{L:nmlzd punx disks} with $\Del_1^{\!*}$, but a direct approach yields better estimates.
It is not hard to check that for $|z-1|\ge4$,
\begin{gather*}
  \tfrac12 |z-1| \le \del^*(z) \le |z-1|\,.
  \intertext{Arguing as in the alternative proof for the second assertion in \rf{L:nmlzd punx disks}, we deduce that the map}
  \bigl([0,+\infty),\ed\bigr)\xra{\vth}(\Del^{\!*},k^*) \quad\text{given by}\quad \vth(s):=1-R_1 e^s
  \intertext{satisfies}
  s,t\in[0,+\infty) \implies  |s-t|\le k^*\bigl(\vth(s),\vth(t)\bigr) \le 2|s-t|
  \intertext{with}
  \vth\bigl([0,+\infty)\bigr) = (-\infty,-R] \subset \Del^{\!
  *} \subset N_{k^*}\bigl[(-\infty,-R];2\pi\bigr]\,.
\end{gather*}
It follows that $\vth$ is a $(2,2\pi)$-QI equivalence.

\smallskip

Pick any $\xi\in\RS\sm\Om$ with $|\xi-1|=\max_{\zeta\in\RS\sm\Om}|\zeta-1|$;\footnote{%
Note that $|\xi-1|\ge\half\diam\bigl(\RS\sm\Om\bigr)>0$ because $\Om^*$ is hyperbolic.}
so $|\xi|\le1$, $0<|\xi-1|\le2$, and $\RS\sm\Om\subset\D[1;|\xi-1|]$.

Let $\Del':=T(\Del_1^{\!*})$, $\Om':=T(\Om^*)$, and $h':=h_{\Om'}$ where $(\Om^*,h^*)\xra{T}(\Om',h')$ is the isometric equivalence given by
\begin{gather*}
  T(z):=\frac{z-1}{\xi-1}\,.
  \intertext{Since $T\bigl(\D[1;|\xi-1|]\bigr)=\cmfD$ and $1,\xi\in\RS\sm\Om\subset\D[1;|\xi-1|]$, we find that}
  0,1\in\RS\sm\Om'\subset\cmfD\,, \quad\text{whence}\quad \mfD^\star:=\mfC\sm\cmfD\subset\Om'\subset\Coo
\end{gather*}
and therefore in $\mfD^\star$, $h^\star:=h_{\mfD^\star}\ge h'\ge h_{01}$.

In particular, for all points $a',b'\in\Del'$ (with $|b'|\ge|a'|\ge R_2:=R_1/|\xi-1|$) we have
\begin{gather*}
  h'(a',b') \le h^\star(a',b') \le \log\frac{\log|b'|}{\log|a'|} + \frac{\pi}{\log2}
  \intertext{and}
  h'(a',b') \ge h_{01}(a',b') \ge \log\frac{\kk+\log|b'|}{\kk+\log|a'|} \ge
  \log\biggl(\frac{L}{\kk+L}\frac{\log|b'|}{\log|a'|}\biggr)\,;
  \intertext{here $L:=\log R_2\ge\log(5/2)>0.9$, \eqref{E:QIEpxD} provides the last inequality, and we have used \rf{F:h in Ds&C01}(c).  Thus for all $a',b'\in\Del'$,}
  h'(a',b')-\frac{\pi}{\log2} \le \abs{\psi(a')-\psi(b')} \le h'(a',b') + \log\Bigl(1+\frac{\kk}{L}\Bigr)
  \intertext{where}
  (\Del',h')\xra{\psi}\bigl([0,+\infty),\ed\bigr) \quad\text{is given by}\quad  \psi(w):=\log\frac{\log|w|}{L}
\end{gather*}
and $\psi$ is a surjective $(1,C_1)$-QI equivalence.

Next, we employ \rf{X:QIEqX} to demonstrate that $(\Del^{\!*},h^*)$ is roughly isometrically equivalent to $(\Del_1^{\!*},h^*)$ (and hence to $(\Del',h')$).  We claim that for each $z\in\Del^{\!*}$ there is a $z_1\in\Del_1^{\!*}$ with $h^*(z,z_1)\le\half$.

Let $z_0\in\Del^{\!*}$.  If $z_0\in\Del_1^{\!*}$, put $z_1:=z_0$; assume $z_0\notin\Del_1^{\!*}$.  Let $z_1$ be the radial projection, from the origin, of $z_0$ onto $\bd\Del_1^{\!*}$.  Note that $|z_0-z_1|\le2$.  Also: $[z_0,z_1]\subset[(R/|z_0|)z_0,z_1]$ and for each $z\in[(R/|z_0|)z_0,z_1]$, $|z|\ge R\ge4$; so
\begin{gather*}
  \lam^\star(z)=\frac1{|z|\log|z|} \le \frac1{|z|} \le \frac1R \le \qtr\,.
  \intertext{In $\mfD^\star\subset\Om^*$, $\lam^\star\ge\lam^*$, and thus}
  h^*(z_0,z_1)\le h^\star(z_0,z_1)=\int_{[z_0,z_1]} \lam^\star\,ds \le \max_{[z_0,z_1]}\lam^\star\cdot |z_0-z_1| \le \half\,.
\end{gather*}
From \rf{X:QIEqX}, the map $(\Del^{\!*},h^*)\xra{f}(\Del_1^{\!*},h^*)$, $f(z):=z_1$ (with $z_1
$ as given in the above claim), is a $1$-rough isometric equivalence.

\bigskip

Finally, we now have
\begin{gather*}
  (\Del^{\!*},h^*)\xra{f}(\Del_1^{\!*},h^*)\xra{T}(\Del',h')\xra{\psi}\bigl([0,+\infty),\ed\bigr)\xra{\vth}(\Del^{\!*},k^*)
  \intertext{and a careful review of our estimates reveals that}
  \Psi:=\psi\comp T\comp f\,,  \quad\Psi(z)=\log\frac{\log|w_1|}{\log R_2} \;\text{with}\; w_1:=T(z_1)\,,
  \intertext{is a rough isometric equivalence.  Moreover, $\Phi:=\vth\comp\Psi$ is $(2,6)$-QI with}
  \Phi(\Del^{\!*})=\vth\bigl([0,+\infty)\bigl)=(-\infty,-R]\subset\Del^{\!*}\subset N_{k^*}\bigl[(-\infty,-R];2\pi\bigr]\,,
\end{gather*}
and thus $\Phi$ provides the asserted QI equivalence between $(\Del^{\!*},h^*)$ and $(\Del^{\!*},k^*)$.
\end{proof}%

\begin{cor} \label{C:PunxDisks} %
Let $\Pi\subset\Om\subset\RS$ be as described in \rf{ss:NKA}.  Then for each $p\in\Pi$ there is an $(L,C)$-\qic\ equivalence
\[
  (\Om_\Pi,h_\Pi)\supset(\Del_p,h_p)\xra{\Phi_p}(\Del_p,k_p) \subset(\Om_\Pi,k_\Pi)
\]
where $h_p$ and $k_p$ denote the distances $h_\Pi$ and $k_\Pi$ restricted to $\Del_p$ (respectively).  We can take $(L,C)=(1,\pi/\log2)$ unless $p=\infty$ in which case $(L,C)=(2,2\pi)$.
\end{cor}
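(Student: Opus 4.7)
The plan is to reduce directly to Proposition~\ref{P:QI equiv punx disks} via an appropriate Euclidean similarity. Any similarity $T(z)=az+b$ scales the Euclidean distance-to-boundary $\del$ uniformly by $|a|$, so it preserves the quasihyperbolic density $\del^{-1}ds$; being conformal, it also preserves the hyperbolic metric. Hence $(\Om_\Pi,k_\Pi)$ and $(\Om_\Pi,h_\Pi)$ are simultaneously isometric to $(T(\Om_\Pi),k_{T(\Om_\Pi)})$ and $(T(\Om_\Pi),h_{T(\Om_\Pi)})$, so it suffices to pick $T$ so that $T(\Delstar_p)$ sits inside the normalized setting of Proposition~\ref{P:QI equiv punx disks}, and then transport the rough isometric equivalence back.

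For a finite puncture $p\in\Pi^\star$ I would set $d:=\dist\bigl(p,(\bd\Om_\Pi)\sm\{p\}\bigr)$. Assumptions \eqref{E:1:a4UPS} and \eqref{E:2:a4UPS} force $d\ge 2r_p$: distances from $p$ to $\RS\sm\Om$ are at least $\del(p)\ge 2r_p$; distances to any other $q\in\Pi^\star$ are at least $2(r_p+r_q)\ge 2r_p$; and the point at infinity lies infinitely far. Pick $q\in\bd\Om_\Pi$ attaining this distance and use $T(z):=(z-p)/(q-p)$. Then $T(p)=0$, $T(q)=1$ gives $1\in\mfC\sm T(\Om_\Pi\cup\{p\})$, and since $\D(p;d)\subset\Om$ is disjoint from $\Pi\sm\{p\}$ we have $\mfD\subset T(\Om_\Pi\cup\{p\})$. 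Also $T(\Delstar_p)=\D[0;r]\sm\{0\}$ with $r:=r_p/d\in(0,\tfrac12]$, so the first part of Proposition~\ref{P:QI equiv punx disks} produces a $(1,\pi/\log 2)$-rough isometric equivalence which I pull back through $T$ to define $\Phi_p$.

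For the infinite puncture $p=\infty$, condition \eqref{E:3:a4UPS} forces $(\bd\Om_\Pi)\cap\mfC\subset\D[0;r_\infty/4]$, a compact subset of $\mfC$, so $d:=\max\{|z|:z\in(\bd\Om_\Pi)\cap\mfC\}$ is attained and satisfies $d\le r_\infty/4$. After rotating so that a maximizer $q$ lies on the positive real axis I would apply $T(z):=z/d$. This similarity fixes $\infty$, sends $q$ to $1$, embeds $(\bd\Om_\Pi)\cap\mfC$ into $\cmfD$, and carries $\Delstar_\infty$ onto $\mfC\sm\D(0;R)$ with $R:=r_\infty/d\ge 4$. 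The second part of Proposition~\ref{P:QI equiv punx disks} then supplies the required $(2,2\pi)$-QI equivalence, which pulls back via $T$.

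The real analytic content is already packaged inside Proposition~\ref{P:QI equiv punx disks}; the only step in the corollary that requires care is the bookkeeping that shows the separation numerology of \eqref{E:ass4UP+S} actually delivers $r\le\tfrac12$ (respectively $R\ge 4$), so that the proposition applies. This is what the inequalities $2r_p\le\del(p)$, $2(r_p+r_q)\le|p-q|$, and $\RS\sm\Om\subset\D[0;r_\infty/4]$ are tuned for, and confirming the hypothesis $T(\Om_\Pi)$ is hyperbolic (via the three points $0,1,\infty$ in its complement) is the other small check. Nothing deeper is expected to arise.
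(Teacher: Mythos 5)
Your proposal is correct and follows essentially the same route as the paper: in both cases one normalizes by the Euclidean similarity $T(z)=(z-p)/(\xi-p)$ (resp.\ $T(z)=z/\xi$ for $p=\infty$), where $\xi$ is a nearest point of $\bd\Om_p$ to $p$ (resp.\ a farthest point of $\RS\sm\Om_\infty$ from the origin), and then applies the two halves of \rf{P:QI equiv punx disks}, conjugating back by $T^{-1}$. Your bookkeeping showing $r_p/d\le\tfrac12$ and $r_\infty/d\ge 4$ from \eqref{E:ass4UP+S} matches the paper's, and the explicit rotation you mention is just the argument of $\xi$ that the paper absorbs into the division by $\xi$.
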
 %
\begin{proof}%
First, assume $p\in\Pi\sm\{\infty\}$.  Let $\Om_p:=\Om_\Pi\cup\{p\}$ and pick a point $\xi\in\bd\Om_p$ nearest to $p$.  Put $T(z):=(z-p)/(\xi-p)$.  Then $\Om':=T(\Om_p)$ satisfies the hypotheses of the first part of \rf{P:QI equiv punx disks}.  Also, $r'_p:=r_p/|\xi-p|\le\half$, so there is a $\pi/\log2$-rough isometric equivalence $\Phi$ from $(\Del'_*,h')$ to $(\Del'_*,k')$, where $\Del'_*=\D(0;r'_p)\sm\{0\}=T(\Del_p)\subset\Om'$ (and $h',k'$ are respectively the hyperbolic, quasihyperbolic distances in $\Om'_*=\Om'\sm\{0\}=T(\Om_\Pi)$).  It now follows that
\[
  \Phi_p:=T^{-1}\comp\Phi\comp T\,, \quad \Phi_p(z)=p+r_p \frac{\xi-p}{|\xi-p|}
      \frac{\log\bigl(r_p/|\xi-p|\bigr)}{\log\bigl(|z-p|/|\xi-p|\bigr)}\,,
\]
is a $\pi/\log2$-rough isometric equivalence from $(\Del_p,h_p)$ to $(\Del_p,k_p)$.

Next, assume $p=\infty\in\Pi$.  Put $\Om_\infty:=\Om_\Pi\cup\{\infty\}$.  Pick a point $\xi\in\RS\sm\Om_\infty$ with $|\xi|=\max_{\zeta\in\RS\sm\Om_\infty}|\zeta|$.  Let $T(z):=z/\xi$.
Then $\Om':=T(\Om_\infty)$ satisfies the hypotheses of the second part of \rf{P:QI equiv punx disks}.
Also, $R':=r_\infty/|\xi|\ge4$, so there is a $(2,2\pi)$-quasiisometric equivalence $\Phi$ from $(\Del'^*,h')$ to $(\Del'^*,k')$, where $\Del'^*=\mfC\sm\D(0;R')=T(\Delstar_\infty)\subset\Om'$ (and $h',k'$ are respectively the hyperbolic, quasihyperbolic distances in $\Om'^*=\Om'\sm\{\infty\}=T(\Om_\Pi)$).
It now follows that
\[
  \Phi_\infty:=T^{-1}\comp\Phi\comp T\,, \quad \Phi_\infty(z)=\xi\, \Phi\bigl(z/\xi\bigr)\,,
\]
is a $(2,2\pi)$-quasiisometric equivalence from $(\Delstar_\infty,h_\infty)$ to $(\Delstar_\infty,k_\infty)$.
\end{proof}%

\subsubsection{Proof of \rf{T:UP+S}}    \label{ss:pf UP+S} %
The reader is encouraged to review \rf{ss:NKA}.  In particular, note that $\RS\sm\Om_\Del$ uniformly perfect  tells us that each isolated point of $\hat{\bd}\Om$ is a limit point of $\Pi$.

Define $(\Om_\Pi,h_\Pi)\xra{\Phi}(\Om_\Pi,k_\Pi)$ by
\[
  \Phi(z):=\begin{cases}
    \Phi_p(z) \quad&\text{if $z\in\Delstar_p$}\,, \\
    z &\text{if $z\in\Om_\Del$}\,;
  \end{cases}
\]
here $(\Delstar_p,h_\Pi)\xra{\Phi_p}(\Delstar_p,k_\Pi)$ are the $(2,2\pi)$-QI equivalences given by \rf{C:PunxDisks}.  Below we demonstrate that the identity map $(\Om_\Del,h_\Pi)\to(\Om_\Del,k_\Pi)$ is $K$-\BL\ with $K=K(M)$.  An elementary, albeit tedious, argument then reveals that $\Phi$ is $2\pi$-roughly surjective and
\[
  \forall\;a,b\in\Om_\Pi\,,\quad
  \half h_\Pi(a,b)-6\pi \le k_\Pi\bigl(\Phi(a),\Phi(b)\bigr) \le K h(a,b)+6\pi\,;
\]
so $\Phi$ is a $(K,6\pi)$-\qic\ equivalence; we assume $K\ge2$.

\medskip

To begin, we demonstrate that each annulus $A\in\mcA_\Pi$ (so, $A\subset\Om_\Pi$ and $c(A)\in\mfC\sm\Om_\Pi$) with $\Sone(A)\cap\Del=\emptyset$ has modulus $\md(A)\le2M+\log4$.  Let $A:=\Set{z\in\mfC|r<|z-c|<R}$ be such an annulus.  Assume $R/r>4$; so, $\frac32 r<\sqrt{rR}<\frac12R$.  Set
\begin{gather*}
  \Piin:=\Pi\cap\Ain\,, \quad \Piout:=\Pi\cap(\Aout\cup\{\infty\})\,, \quad \Piout^\star:=\Piout\sm\{\infty\}\,.
\intertext{Define $A':=\Set{z\in\mfC|r'<|z-c|<R'}$  where}
\begin{align*}
    r'&:=\begin{cases}
    r \quad&\text{if $\Piin=\emptyset$, or, $\Piin=\{c\}$ and $\Ain\cap(\mfC\sm\Om)\neq\emptyset$}\,, \\
    \frac32 r &\text{if $\Piin\sm\{c\}\ne\emptyset$ (i.e., $c\notin\Piin\ne\emptyset$ or $\{c\}\subsetneq\Piin$)}\,, \\
    \sqrt{rR} &\text{if $\Piin=\{c\}$ and $\Ain\cap(\mfC\sm\Om)=\emptyset$}\,.
  \end{cases}
\intertext{and}
  R'&:=\begin{cases}
    R \quad&\text{if $\Piout=\emptyset$, or, $\Piout=\{\infty\}$ and $A\cap\Del_\infty=\emptyset$}\,, \\
    \half R &\text{if $\Piout^*\ne\emptyset$ (i.e., $\{\infty\}\ne\Piout\ne\emptyset$)}\,, \\
    \sqrt{rR} &\text{otherwise (i.e., $\Piout=\{\infty\}$ and $A\cap\Del_\infty\ne\emptyset$)}\,.
  \end{cases}
\end{align*}
\end{gather*}

We show below that when $R'=\sqrt{rR}$, $\Aout\cap(\mfC\sm\Om_\Pi)=\emptyset$.
Since $\Om_\Pi$ is hyperbolic, $\Aout\cap(\mfC\sm\Om_\Pi)=\emptyset=\Ain\cap(\mfC\sm\Om)$ cannot both hold (as these would imply $\RS\sm\Om_\Pi=\{c,\infty\}$).  Thus $r\le r'< R'\le R$, $A'\csubann A$ is a concentric subannulus of $A$, and it is not difficult to check that $\log(R/r)\le2\log(R'/r')+\log4$.  We claim that $A'\cap\Del=\emptyset$.  Thus $A'\subset\Om_\Del$; since $\RS\sm\Om_\Del$ is $M$-uniformly perfect, $\md(A')\le M$ and $\md(A)\le2M+\log4$ as asserted.

\medskip

Now we check that $A'\cap\Del=\emptyset$.  Employing either \eqref{E:1:a4UPS} if $c\in\bOm$ or \eqref{E:2:a4UPS} if $c\in\Pi^\star$ we find that
\begin{equation}\label{E:2rple}
  \forall\; p\in\Pi^\star\sm\{c\}\,, \quad 2r_p\le|p-c|\,.
\end{equation}

First we show that for all $q\in\Piin$, $\Del_q\subset\D[c;r']$.  Assume $\Piin\ne\emptyset$.  If $c\notin\Piin$, then \eqref{E:2rple} provides the estimate $r_q\le\half r$ for $q\in\Piin$, so $\Del_q\subset\D[c;\frac32 r]$ as asserted.  A similar argument works for the case $\{c\}\subsetneq\Piin$.  Assume $\{c\}=\Piin$.  Suppose $\Ain\cap(\mfC\sm\Om)\ne\emptyset$, and let $\zeta$ be a point in this set.  By \eqref{E:1:a4UPS}, $2r_c\le|c-\zeta|\le r$, so $\Del_c\subset\D[c;r]$ as asserted.  When $\Ain\cap(\mfC\sm\Om)=\emptyset$, we have no estimates for $r_c$, but $\Sone(A)\cap\Del=\emptyset$ means that $\Del_c\subset\D[c;\sqrt{rR}]$.

\smallskip

Next we show that for all $q\in\Piout$, $\Del_q\cap\D(c;R')=\emptyset$. Assume $\Piout\ne\emptyset$.  Suppose $q\in\Piout^\star$.  If $r_q\le\half R$, then $|q-c|-r_q\ge\half R$, and if $r_q\ge\half R$, then by \eqref{E:2rple}, $|q-c|-r_q\ge r_q\ge\half R$; thus, in both cases $\Del_q\cap\D(c;\half R)=\emptyset$.  Also, if $\infty\in\Pi$, then $|c|+R\le|c|+|q-c|\le\frac34 r_\infty$, so here $\Del_\infty\cap\D(c;R)=\emptyset$.

It remains to examine the case $\Piout=\{\infty\}$ and $A\cap\Del_\infty\ne\emptyset$.  Here we have no estimates for $r_\infty$, but $\Sone(A)\cap\Del=\emptyset$ means that $\Del_\infty\cap\D[c;\sqrt{rR}]=\emptyset$.  Also, in this setting we must have $\Aout\cap(\mfC\sm\Om_\Pi)=\emptyset$.  (If there were a point $\zeta\in\Aout\cap(\mfC\sm\Om_\Pi)$, then by  \eqref{E:3:a4UPS}, $R\le|\zeta-c|\le|\zeta|+|c|\le\half r_\infty$, which would give $A\subset\D[0;\frac34 r_\infty]$ contradicting $A\cap\Del_\infty\ne\emptyset$.)

\medskip

We can use the above to verify that $\bp_\Pi\le M+2$ in $\Om_\Del$ which tells us that $\lam_\Pi\,ds$ and $\del_\Pi^{-1}\,ds$ are $(\kk+M+2)$-\BL\ equivalent in $\Om_\Del$.  However, if $\gam:a\cra b$ is either a hyperbolic or a quasihyperbolic geodesic in $\Om_\Pi$ with endpoints $a,b\in\Om_\Del$, then $\gam$ may leave $\Om_\Del$.  Nonetheless, the ABC property (see \rf{ss:ABC}) ensures that $\gam$ cannot enter too deep into $\Del\sm\Pi=\Om_\Pi\sm\Om_\Del$.  In particular, if $\gam$ enters some $\Del_p$, then by \rf{F:ABC info}(a), $|\gam|\cap\D[p;e^{-6\kk}r_p]=\emptyset$ when $p\ne\infty$ and $|\gam|\subset\D(0;e^{6\kk}r_\infty)$ when $p=\infty$.

Our final task is to corroborate that $\bp_\Pi\lex1$ in $\Om_{\tilde\Del}:=\Om\sm{\tilde\Del}$ where $\tilde\Del:=\bigcup_{p\in\Pi}\tilde\Del_p$ and
\[
  \tilde\Del_p:=\begin{cases}
                  \D[p;e^{-6\kk}r_p] \quad&\text{if $p\in\Pi^\star$}\,,  \\
                  \RS\sm\D(0;e^{6\kk}r_\infty) &\text{if $p=\infty\in\Pi$}\,.
  \end{cases}
\]
To this end, let $a\in\Om_{\tilde\Del}$ and pick $c\in\bd\Om_\Pi$ with
\[
  A:=\BP_\Pi(a)=\A(c,d;m)=\Set{de^{-m}<|z-c|<de^m}=\Set{r<|z-c|<R}\in\mcA_\Pi
\]
where $d:=\del_\Pi(a)=|a-c|$ and $\bp_\Pi(a)=m=\half\log(R/r)$.  Recall \eqref{E:2rple} and the notation $\Piin,\Piout$.  We assume $\Sone(A)\cap\Del\ne\emptyset$, so $\Ups:=\Set{p\in\Pi| \Sone(A)\cap\Del_p\ne\emptyset}\ne\emptyset$.

\smallskip

We consider several cases.  If $p\in\Ups\cap\Piin\sm\{c\}$, then by \eqref{E:2rple}
\begin{gather*}
  |p-c|\le r \quad\text{and}\quad d\le|p-c|+r_p \le \frac32|p-c|\,,
  \intertext{so}
  \sqrt{R/r}=\frac{d}{r}\le\frac32 \quad\text{whence}\quad \frac{R}{r}\le\frac94<3\,.
  \intertext{If $p\in\Ups\cap\Piout^\star$, then again by \eqref{E:2rple}}
  |p-c|\ge R \quad\text{and}\quad  d+r_p\ge|p-c|\,, \quad\text{so}\quad d\ge\half|p-c|
  \intertext{and therefore}
  \sqrt{R/r}=\frac{R}{d}\le2 \quad\text{and}\quad \frac{R}{r}\le4\,.
\end{gather*}
Thus in these two easy cases we have $\bp_\Pi(a)=\half\md(A)\le\log2$.

\smallskip

It remains to deal with the case $\Ups\cap\{c,\infty\}\ne\emptyset$; here $\Sone(A)\cap\Del_q=\emptyset$ for all $q\in\Pi^\star\sm\{c\}$.  Roughly speaking, we exhibit a concentric subannulus $A'\csubann A$ with $\md A' \eqx \md A$ and $\Sone(A')\cap\Del=\emptyset$.  It then follows from earlier work that $\md A\lex M$.

Suppose $c\in\Ups$.  Then $\Ups=\Set{c}$, $r_c\ge d$, and $R\ge2r_c$ (because $c$ is an isolated point of $\bOm_\Pi$ nearest to $a$, so $\bdo A\cap\bOm_\Pi\ne\emptyset$).  If $R\le2r_c e^{m/2}$, then as $a\notin\tilde\Del_p$,
\begin{gather*}
  2r_c \ge R e^{-m/2}=d e^{m/2} = |a-c| e^{m/2} \ge e^{-6\kk} r_c e^{m/2}
  \intertext{whence}
  \bp(a)=m\le 12\kk+\log4\,.
  \intertext{Assume $R\ge2r_c e^{m/2}$.  We claim that $r':=(2r_c)^2/R\in[r,d]$.  Therefore}
  A':=\Set{z\in\mfC|r'<|z-c|<R}\csubann A \quad\text{is a concentric subannulus of $A$}
  \intertext{with $\Sone(A')=\Sone(c;2r_c)$.  From \eqref{E:ass4UP+S}, $\Sone(A')\cap\Del=\emptyset$, so by earlier work,}
  \bp(a)=\half\md(A)=\log\frac{R}{d}\le\log\frac{R}{r'}=\md(A')\le 2M+\log4\,.
  \intertext{To check the claim, note that}
  r=\frac{d^2}R\le\frac{r_c^2}R\le r'=\frac{(2r_c)^2}R \le \frac{(Re^{-m/2})^2}R=R e^{-m}=d\,.
\end{gather*}

Suppose $\infty\in\Ups$.  Here $\Ups=\Set{\infty}$, $\frac34 r_\infty\le d\le C r_\infty$ where $C:=e^{6\kk}+\qtr$, and, $r\le\half r_\infty$ (because $\bdi A\cap\bOm_\Pi\ne\emptyset$).  If $r\ge\half r_\infty e^{-m/2}$, then
\begin{gather*}
  \half r_\infty \le r e^{m/2}=d e^{-m/2} \le C r_\infty e^{-m/2}
  \intertext{whence}
  \bp(a)=m\le 12\kk+4\,.
  \intertext{Assume $r\le\half r_\infty e^{-m/2}$.  We claim that $R':=(\half r_\infty)^2/r\in[d,\frac49 R]$.  Therefore}
  A':=\Set{z\in\mfC|r<|z-c|<R'}\csubann A \quad\text{is a concentric subannulus of $A$}
  \intertext{with $\Sone(A')=\Sone(c;\half r_\infty)$.  From \eqref{E:3:a4UPS}, $\Sone(A')\cap\Del=\emptyset$, so by earlier work,}
  \bp(a)=\half\md(A)=\log\frac{d}{r}\le\log\frac{R'}{r}=\md(A')\le 2M+\log4\,.
  \intertext{To check the claim,  note that}
  d=r e^m = \frac{(re^{m/2})^2}r \le \frac{(\half r_\infty)^2}r = R' \le \frac{(\frac23 d)^2}r =\frac{4rR}{9r} = \frac49R\,.
\end{gather*}

\medskip

Having established that $\bp_\Pi\le M':= \max\{2M+\log4,12\kk+4\}$ in $\Om_{\tilde\Del}$, we now know that $\lam_\Pi\,ds$ and $\del_\Pi^{-1}\,ds$ are $K$-\BL\ equivalent in $\Om_{\tilde\Del}$ with $K:=K(M)=\kk+M'$.  As explained above, any hyperbolic or quasihyperbolic geodesic in $\Om_\Pi$ with endpoints in $\Om_\Del$ lies in $\Om_{\tilde\Del}$.  Therefore the identity map $(\Om_\Pi,h_\Pi)\to(\Om_\Pi,k_\Pi)$ is $K$-\BL.
\qed

\subsection{Proof of Theorem~\ref{TT:example}}  \label{s:example} %
Here we concoct a plane domain $\Om$ and prove that the assertions of \rf{TT:example} hold for $\Om$.  We require the following technical fact about \qsc\ maps; this must be folklore, but we do not know a reference.

\begin{lma} \label{L:QSid} %
Let $(a_n)_0^\infty$ be a strictly increasing sequence in $\mfR\sm\{\infty\}$ with $a_{n+1}/a_n\to+\infty$ as $n\to+\infty$.  Put $A:=\Set{a_n|n\ge0}\cup\{\infty\}\subset\Rh$.  Then every QS \homeo\ $f:A\to A$ is ``eventually the identity''; i.e., there is an $N$ such that for all $n\ge N$, $f(a_n)=a_n$.\footnote{%
Here we use the chordal distance from $\Rh$.}
\end{lma}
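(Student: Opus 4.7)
The plan is to exploit the topological observation that $\infty$ is the unique accumulation point of $A$ in the chordal topology, so any self-homeomorphism $f$ of $A$ must fix $\infty$. Consequently $f$ restricts to a bijection of $\Set{a_n | n \ge 0}$, so there is a permutation $\pi$ of the nonnegative integers with $f(a_n) = a_{\pi(n)}$; the goal reduces to showing that $\pi$ is eventually the identity.

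First I would record the asymptotics of the quantities $b_n := \chi(a_n,\infty) = 2/\sqrt{1+a_n^2}$. These form a strictly decreasing sequence with $b_n \to 0$, and more importantly
\[
  \frac{b_{n+1}}{b_n} \;=\; \sqrt{\frac{1+a_n^2}{1+a_{n+1}^2}} \;\longrightarrow\; 0 \quad\text{as } n\to\infty,
\]
because $a_{n+1}/a_n \to \infty$ by hypothesis; in other words, $(b_n)$ decays super-geometrically.

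Next, assuming $f$ is $\eta$-QS, I would choose $t_0 \in (0,1)$ with $\eta(t_0) < 1$ (possible since $\eta$ is a homeomorphism of $[0,\infty)$ fixing $0$). By the super-geometric decay there is an $N$ such that $b_{n+1}/b_n < t_0$ for every $n \ge N$, so $b_m/b_n \le b_{n+1}/b_n < t_0$ for all $m > n \ge N$. Applying the $\eta$-QS inequality to the triple $(\infty, a_m, a_n)$ and using $f(\infty)=\infty$ yields
\[
  \frac{b_{\pi(m)}}{b_{\pi(n)}} \;=\; \frac{\chi(\infty,f(a_m))}{\chi(\infty,f(a_n))} \;\le\; \eta(t_0) \;<\; 1,
\]
and since $(b_k)$ is strictly decreasing this forces $\pi(m) > \pi(n)$. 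Hence $\pi|_{[N,\infty)}$ would be strictly increasing, and I would finish with a routine counting step: letting $F := \pi(\{0,\dots,N-1\})$, a finite set of cardinality $N$ with maximum $M$, the increasing bijection $\pi|_{[N,\infty)} \to \{0,1,2,\dots\}\sm F$ forces $\pi(n) = n$ for every $n > M$.

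The only genuinely nontrivial step is the QS application that converts super-geometric decay of chordal distances into the strict inequality $\eta(t_0) < 1$; the remainder will be elementary topology (preservation of accumulation points under a homeomorphism) and elementary combinatorics (strictly increasing tails of bijections of $\{0,1,2,\ldots\}$). No result from the earlier sections of the paper is needed.
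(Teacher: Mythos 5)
Your proposal is correct and follows essentially the same route as the paper's proof: both use the topological observation that $f(\infty)=\infty$, exploit the super-geometric decay of $\chi(a_n,\infty)$ together with the $\eta$-QS inequality (with a threshold chosen so that $\eta(t)<1$ for small $t$) to show that the induced permutation of indices is eventually strictly increasing, and then finish with a combinatorial argument on bijections of $\{0,1,2,\dots\}$. The only difference is cosmetic: your combinatorial finish (an increasing bijection $\pi|_{[N,\infty)}\to\{0,1,\dots\}\setminus F$ with $F$ finite and $\max F=M$ forces $\pi(n)=n$ for $n>M$) is somewhat more streamlined than the paper's two-step argument via adjacency of consecutive images, but the underlying idea is identical.
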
 
\begin{proof}%
To start, note that as $f$ is a \homeo, it is a bijection and $f(\infty)=\infty$ (as $\infty$ is the only non-isolated point of $A$).

We exhibit $p,q$ such that for all $n\ge1$, $f(a_{p+n})=a_{q+n}$.  Then $f$ maps $\{a_0,a_1,\dots,a_p\}$ bijectively onto $\{a_0,a_1,\dots,a_q\}$.  Hence $p=q$ and our claim is established.

We need only produce a $p$ such that for all $n\ge p$, $f(a_n)<f(a_{n+1})$ with $f(a_n)$ and $f(a_{n+1})$ \emph{adjacent} (meaning that if $f(a_n)=a_m$, then $f(a_{n+1})=a_{m+1}$).  Indeed, given such a $p$ we simply let $q$ be the unique integer with $a_q=f(a_p)$.

Below we use the \qsy\ of $f$ to verify that there is an $N$ such that for all $n\ge N$, $f(a_n)<f(a_{n+1})$.  Let $M$ be the unique integer with $a_M=f(a_N)$.  Thus for all $n\ge N$, $a_M\le f(a_n) < f(a_{n+1})$.

We claim that there are a finite number of $m\ge M$ such that $a_m\notin f\bigl(\Set{a_n|n\ge N}\bigr)$.  Indeed, if $m$ is such, then $a_m\in f\bigl(\{a_0,a_1,\dots,a_{N-1}\}\bigr)$, so, there are at most $N$ such $m$.  Let $\ell$ be the \emph{largest} of all these $m$.

Thus there is a $p>N$ such that $a_M\le f(a_{p-1}) < a_\ell < f(a_{p})$, but for all $n\ge p$, $f(a_n)$ and $f(a_{n+1})$ are adjacent.  This is the sough after $p$.

\medskip

To produce $N$, assume $f$ is $\eta$-QS.  Pick $\tau\in(0,1)$ so that $t\in(0,\tau)\implies\eta(t)<\half$.  Since
\begin{gather*}
  \frac{\chi(a_{n+1},\infty)}{\chi(a_{n},\infty)} \le 2
    \frac{1+a_{n}}{1+a_{n+1}}\to0\,,
  \intertext{there exists an $N$ such that for all $n\ge N$,}
  t:=\frac{\chi(a_{n+1},\infty)}{\chi(a_{n},\infty)}<\tau\,.
  \intertext{Then for such $n$,}
  \frac{\chi(fa_{n+1},f\infty)}{\chi(fa_{n},f\infty)} \le \eta(t) < \half
  \intertext{whence}
  \frac4{1+f(a_{n+1})} \le 2\chi(fa_{n+1},\infty) \le \chi(fa_n,\infty)
    \le \frac4{1+f(a_n)}
\end{gather*}
and therefore $f(a_n)<f(a_{n+1})$ as asserted.
\end{proof}%

\begin{pf}{Proof of \rf{TT:example}} %
Let $(a_n)_1^\infty$ be a strictly decreasing sequence in $(-\infty,0)$ with $a_{n+1}/a_n\to+\infty$ as $n\to+\infty$.  Put $a_0:=0$, $A:=\Set{a_n|n\ge0}$, and $\Om:=\mfC\sm A$.  We verify that the assertions of \rf{TT:example} hold for $\Om$.\footnote{%
The diligent reader will check that there is no choice of $r_p>0$ such that the hypotheses in \rf{ss:NKA} hold for $\Pi=A$ with $\Om=\mfC$; the uniform perfectness of $\RS\sm\Om_\Del$ always fails.}

It is straightforward to check that $\Om$ is a uniform domain; \cite[Lemma~8.4]{DAH-thesis} provides a convenient criterion here.

We employ \cite[Lemma~3.14]{BHK-unif} and its hyperbolic counterpart \rf{P:For unif=>QS}(f,g).  To this end, note that the point $o:=1\in\Om$ has the property that $\sig(1)=\dist_\sig(1,\hat\bd\Om)=\max_\Om\sig=\pi/2$.  Also, $[1,+\infty)\subset\mfR\subset\RS$ is both a hyperbolic and a quasihyperbolic geodesic ray in $\Om$ from $1$ to the boundary point at infinity.

Let $h_\veps=h_{\veps,o}$ and $k_\veps=k_{\veps,o}$ denote the standard visual distances on $\bd_G(\Om,h)$ and $\bd_G(\Om,k)$ repectively; as in \rf{ss:GH&U}, the visual parameter $\veps\in(0,\veps_0]$ and $o=1\in\Om$ is the fixed base point.

Thanks to \cite[Theorem~3.6]{BHK-unif} and \rf{T:unif=>QS}, we know that both $\bd_G(\Om,k)$ and $\bd_G(\Om,h)$ are naturally equivalent to $\hat\bd\Om=\bOm\cup\{\infty\}=A\cup\{\infty\}$.  We use $a_n$ to denote any of: $a_n\in A\subset\bOm$, the corresponding point in $\bd_G(\Om,k)$, or  the corresponding point in $\bd_G(\Om,h)$.  We also let $\zeta$ denote the boundary point at infinity in $\hat\bd\Om$ and the corresponding points in $\bd_G(\Om,k)$ and $\bd_G(\Om,h)$.

Given a boundary point $\xi=a_n$, we easily check that the associated point $x\in[o,\zeta)=[1,\infty)$ given by \cite[Lemma~3.14]{BHK-unif} or \rf{P:For unif=>QS}(f) is $x=x_n:=-a_n$.  Employing \rf{P:For unif=>QS}(g) (or its quasihyperbolic counterpart in \cite{BHK-unif}) we deduce that for each distance function $d\in\{h,k\}$,
\begin{gather*}
  C^{-1} e^{-\veps d(x_n,1)} \le d_\veps(a_n,\zeta) \le
    C e^{-\veps d(x_n,1)}\,,
  \intertext{where $C=C(\veps)$.  It follows that}
  \frac{k_\veps(a_{n+1},\zeta)}{k_\veps(a_n,\zeta)} \le
    C^2 e^{-\veps k(x_n,x_{n+1})}
  \intertext{and that}
  \frac{h_\veps(a_{n+1},\zeta)}{h_\veps(a_n,\zeta)} \ge
    C^{-2} e^{-\veps h(x_n,x_{n+1})}\,.
\end{gather*}

Note that
\begin{gather*}
  \forall\; n\ge1\,,\quad  k(x_n,x_{n+1})=\log\frac{x_{n+1}}{x_n}
  \intertext{and, arguing as in \rf{X:h est},}
  \forall\; n\ge N\,, \quad  h(x_n,x_{n+1})\le
    4+\pi\log\Bigl(\half\log\frac{x_{n+1}}{x_n}\Bigr)
\end{gather*}
where $N$ is chose so that $n\ge N\implies \log(x_{n+1}/x_n)>2$.

Now suppose there were a power \qsy\ $\bd_G(\Om,k)\xra{f}\bd_G(\Om,h)$; say, $f$ is $\eta$-QS with $\eta(t):=H\bigl( t^\alf \vee t^{1/\alf} \bigr)$ for some constants $H>0$ and $\alf\in(0,1]$.  Then $f$ would induce a \qsy\ of $A\cup\{\zeta\}$ onto itself, and hence by \rf{L:QSid} we would know that $f(\zeta)=\zeta$ and $f(a_n)=a_n$ for all $n\ge N$.

It would then follow that: for all $n\ge N$,
\begin{gather*}
  C^{-2} e^{-\veps h(x_n,x_{n+1})}
    \le \frac{h_\veps(a_{n+1},\zeta)}{h_\veps(a_n,\zeta)}
    \le\eta\biggl(\frac{k_\veps(a_{n+1},\zeta)}{k_\veps(a_n,\zeta)}\biggr)
    \le H C^{2\alf} e^{-\alf\veps k(x_n,x_{n+1})}
  \intertext{or,}
  e^{\veps\bigl(\alf k(x_n,x_{n+1}) - h(x_n,x_{n+1})\bigr)}
    \le H C^{1+2\alf}\,,
  \intertext{which in turn would imply that $\alf k(x_n,x_{n+1}) - h(x_n,x_{n+1})$ is bounded as $n\to+\infty$.  However, $L_n:=\half\log(x_{n+1}/x_n)\to+\infty$ as $n\to+\infty$, and,}
  \alf k(x_n,x_{n+1}) - h(x_n,x_{n+1} \ge 2\alf L_n - 4 - \pi \log L_n
\end{gather*}
from which we deduce that $\alf k(x_n,x_{n+1}) - h(x_n,x_{n+1})$ is \emph{not} bounded as $n\to+\infty$.  This contradiction means there cannot exist such a power \qsy.
\end{pf}
\bibliographystyle{amsalpha} 
\bibliography{mrabbrev,bib}  

\end{document} 

paper outline  %
Intro          %
Prelims        %
  basic info   %
  std notation %
Examples       %
Theorems       %
Questions      %


\section{Introduction}  \label{S:Intro} 
\subsection{bla}  \label{s:bla} 
\begin{Thm}  \label{TT:what} 
\end{Thm}                    

\begin{Cor}  \label{CC:..} 
\end{Cor}                  

\begin{thm} \label{T:ttt} %
\end{thm} 
\begin{proof}%
\end{proof}%

\begin{prop} \label{P:ppp} %
\end{prop} 
\begin{proof}%
\end{proof}%

\begin{lma} \label{L:lma} %
\end{lma} 

\begin{fact}  \label{F:ff} %
\end{fact} %
also use for:  remarks(s), examples, defns

\begin{rmks}  \label{R:} 
\hfill\break
blah blah blah
\smallskip\noindent(a)
\smallskip\noindent(a)

OR can use below but it gets seriously indented :-((
\begin{itemize}
  \item[ ]
  \item[(a)]
  \item[(b)]
\end{itemize}
\end{rmks} %

\begin{qstn} \label{Q:...} %

figures, pictures, diagrams etc %







This document is organized as follows: Section~\ref{S:Prelims} contains preliminary information including basic definitions and terminology as well as elementary and/or well-known facts.  We exhibit examples and examine ...  In Section~\ref{S:misc} we verify ....